\documentclass[12pt]{article}
\date{}
\usepackage{marvosym, fontawesome, parskip}
\usepackage[title]{appendix}
\usepackage{xcolor}
\usepackage[margin=1in]{geometry}                
\usepackage{graphicx}
\usepackage{subcaption}
\usepackage{pdflscape}
\usepackage{amssymb}
\usepackage[normalem]{ulem}
\usepackage{hyperref}
\usepackage{enumitem}
\usepackage{mathrsfs}
\usepackage{epstopdf}
\usepackage{rotating}
\usepackage{longtable} 
\usepackage{adjustbox}
\usepackage{float}

\usepackage{color}
\usepackage{bbm, dsfont}
\usepackage{pst-node}
\usepackage{tikz-cd}
\usepackage{amsfonts} 
\usepackage{geometry}
\usepackage{amsthm}
\usepackage{amsmath}
\usepackage{titlesec}
\usepackage{amssymb}
\usepackage{enumitem}
\usepackage{float}
\usepackage [english]{babel}
\usepackage [autostyle, english = american]{csquotes}
\usepackage{algorithm}
\usepackage[noend]{algpseudocode} 
\makeatletter
\def\BState{\State\hskip-\ALG@thistlm}
\makeatother
\usepackage{hyperref}
\usepackage[normalem]{ulem}
\usepackage{mathrsfs}
\usepackage[italicdiff]{physics}
\usepackage[doi=false, url =false , isbn = false,style=alphabetic,sorting=nyt, backend = biber, maxcitenames=50, maxalphanames = 5, maxnames=50]{biblatex}
\newlist{casess}{enumerate}{1}
\setlist[casess]{label=     \textbf{Case} \arabic*:}
\usepackage{mathtools}

\makeatletter
\newcommand*{\rom}[1]{\expandafter\@slowromancap\romannumeral #1@}
\makeatother

\usepackage{etoolbox}

\makeatletter
\patchcmd{\ttlh@hang}{\parindent\z@}{\parindent\z@\leavevmode}{}{}
\patchcmd{\ttlh@hang}{\noindent}{}{}{}
\makeatother

\usepackage{listings}
\usepackage{color} 
\definecolor{mygreen}{RGB}{28,172,0} 
\definecolor{mylilas}{RGB}{170,55,241}

\newlist{Assumptions}{enumerate}{1}
\setlist[Assumptions]{label=     \textbf{Assumption} \arabic*:}

\makeatletter

\newsavebox{\@brx}
\newcommand{\llangle}[1][]{\savebox{\@brx}{\(\m@th{#1\langle}\)}%
  \mathopen{\copy\@brx\kern-0.5\wd\@brx\usebox{\@brx}}}
\newcommand{\rrangle}[1][]{\savebox{\@brx}{\(\m@th{#1\rangle}\)}%
  \mathclose{\copy\@brx\kern-0.5\wd\@brx\usebox{\@brx}}}
\makeatother

\usepackage{lipsum} 
\usepackage{titlesec}
\titleformat{\subsection}[runin]
       {\normalfont\bfseries}
       {\thesubsection}
       {0.5em}
       {}
       [.]

 \newtheorem{thm}{Theorem}[section]
 \newtheorem{cor}[thm]{Corollary}
 
 \newtheorem{lem}[thm]{Lemma}
 \newtheorem{prop}[thm]{Proposition}

\newtheorem{letterthm}{Theorem}

 \theoremstyle{definition}
 \newtheorem{defn}[thm]{Definition}
 \theoremstyle{remark}
 
 \newtheorem{ex}[thm]{Example}
 \numberwithin{equation}{section}

\numberwithin{equation}{section}

\newcommand{\actson}{\curvearrowright}

\DeclareMathOperator{\ab}{ab}

\newcommand{\cG}{\mathcal{G}}
\newcommand{\cU}{\mathcal{U}}
\newcommand{\cW}{\mathcal{W}}

\newcommand{\cS}{\mathcal{S}}
\newcommand{\cB}{\mathcal{B}}

\newcommand{\St}{\rm St}

\newcommand{\HNN}{\rm HNN}
\newcommand{\GL}{\rm GL}
\newcommand{\PSL}{\rm PSL}
\newcommand{\Cost}{\rm Cost}
\newcommand{\BS}{\rm BS}
\newcommand{\rmK}{\rm K}

\def\N{\mathbb{N}}
\def\T{\mathbb{T}}
\def\Z{\mathbb{Z}}

\def\Z{\mathbb Z}
\newcommand{\Ad}{\rm Ad}
\newcommand{\Rad}{\rm Rad}
\newcommand{\rmH}{\rm H}
\def\S{\mathcal S}

\newcommand{\Aut}{{\rm Aut}}
\newcommand{\Ext}{\rm Ext}
\newcommand{\Hom}{\rm Hom}
\newcommand{\SL}{\rm SL}

\newcommand{\id}{\rm id}
\newcommand{\rmZ}{\rm Z}

\DeclareMathOperator{\Span}{Span}
\DeclarePairedDelimiterX{\inp}[2]{\langle}{\rangle}{#1, #2}

\makeatletter
\newcommand*\bigcdot{\mathpalette\bigcdot@{.5}}
\newcommand*\bigcdot@[2]{\mathbin{\vcenter{\hbox{\scalebox{#2}{$\m@th#1\bullet$}}}}}
\makeatother

\def\r{{\rm r}}

\def\<{\langle}
\def\>{\rangle}

\newcommand{\cR}{\mathcal{R}}

\numberwithin{equation}{section}

\begin{document}

\title{Connes-embeddability and twisted group von Neumann algebras}

\author{
  Soham Chakraborty\textsuperscript{1} \and
  Felipe Flores \textsuperscript{2}
}

\date{\today}

\setlength{\parindent}{0em}

\maketitle 

\begingroup
\renewcommand\thefootnote{\arabic{footnote}}
\footnotetext[1]{\hspace{0 em} \faMapMarker : Départment de Mathématiques et Applications, École Normale Supérieure, 45 Rue d'Ulm, 75005 Paris, France. \Letter: \texttt{soham.chakraborty@ens.psl.eu}}

\footnotetext[2]{\hspace{0 em} \faMapMarker : Department of Mathematics, University of Virginia, 114 Kerchof Hall. 141 Cabell Dr, Charlottesville, Virginia, United States. \Letter: \texttt{hmy3tf@virginia.edu}}
\endgroup

\begin{abstract}\setlength{\parindent}{0pt}\setlength{\parskip}{1ex}\noindent
    We call a countable discrete group \textit{cocycle-hyperlinear} if all its twisted group von Neumann algebras are Connes-embeddable. We show that the class of cocycle-hyperlinear groups includes extensions of treeable groups by amenable groups and central extensions of $\SL(n,\Z), \GL(n,\Z)$ and $\PSL(n,\Z)$ for $n \geq 2$. We also show that this class is closed under taking direct products, free products, amalgamated free products with amenable amalgams, HNN extensions by amenable groups, as well as certain quotients, extensions, and overgroups.
\end{abstract}


\section{Introduction}

Finite-dimensional approximation has linked operator algebras and group theory since the seminal works of Murray and von Neumann. Let $R$ denote the hyperfinite II$_1$ factor and let $R^\omega$ be a tracial ultrapower. The breakthrough work of Connes \cite{Connes_classification} isolated the famous \textit{embedding problem}, which asks if every II$_1$ factor with a separable predual has a trace-preserving embedding into $R^\omega$. For a countable discrete group $G$, the corresponding problem asks if the group von Neumann algebra $L(G)$ embeds tracially into $R^\omega$; such groups are called \textit{hyperlinear}. The terminology was introduced by Radulescu in \cite{Radulescu}, who proved that the non-residually finite Baumslag-Solitar group $\langle a,b \; | \; ab^3a^{-1} = b^2 \rangle $ is hyperlinear.

A parallel approximation theory arose from symbolic dynamics. Gromov introduced \textit{sofic} groups in \cite{Gromov_99} as a common generalization of amenable and residually finite groups. The theory was systematically developed by Weiss in \cite{Weiss00}; Elek and Szabo then showed in \cite{Elek_Szabo} that sofic groups are hyperlinear. Thus, we have the implications: 
\begin{align*}
    \text{residually finite or amenable} \implies \text{ sofic } \implies \text{ hyperlinear}
\end{align*}
The reverse implication hyperlinear $\implies$ sofic is still open. Existence of non-sofic or non-hyperlinear groups remained a challenging open question for several decades. Note that the recent negative resolution of the Connes embedding problem in \cite{10.1145/3485628} (see also \cite{Manzoor}) does not exhibit group von Neumann algebras that do not embed into $R^\omega$. However, in the course of the past few months, several breakthrough results were obtained in this domain using LLMs. Building on work by Kun in \cite{Kun19} and Kun-Thom in \cite{Thom_Kun}, OpenAI announced the construction of an explicit non-sofic group \cite[Chapter 3]{OpenAI}, thereby resolving the longstanding existence problem for non-sofic groups. Even more recently, non-hyperlinear groups were shown to exist by Alekseev, Liu and Thom in \cite{Thom_etal}. They solved the centralizer problem (\cite[Problem 6.2(a)]{Alekseev_Thom}) implying the existence of a non-hyperlinear group, which was already shown by Thom in \cite{Thom_centralizer}). We refer the reader to Pestov's survey \cite{Pestov_survey} for a historical overview and an elaborate treatment of these notions. We also refer the reader to some interesting results extending the class of hyperlinear groups in \cite{Hayes_Sale,Gao_Sri_Patchell,Gao_Sri_Patchell26}.

The purpose of this article is to study a strengthening of hyperlinearity that deals with central extensions and projective representations. Given a normalized scalar 2-cocycle $\omega \in \rmZ^2(G,\T)$, let $L_{\omega}(G)$ denote the cocycle-twisted group von Neumann algebra generated by unitaries $\{u_g \; | \; g \in G\}$ satisfying: 
\begin{align*}
    u_g u_h = \omega(g,h)u_{gh} \text{ for all } g,h \in G \text{ and } \Tr(u_g) = \delta_{g,e}
\end{align*}
We call a countable discrete group $G$ \textit{cocycle-hyperlinear} if $L_{\omega}(G)$ is Connes-embeddable for all normalized 2-cocycles. The definition is motivated by Thom's criterion for central extensions in \cite{Thom10}. Indeed, if $1 \rightarrow A 
\rightarrow \widetilde{G} \rightarrow G \rightarrow 1$ is a central extension of $G$ by a countable abelian group $A$ represented by a 2-cocycle $\sigma \in \rmZ^2(G,A)$, then Thom observed in \cite[Lemma 3.4]{Thom10} that $\widetilde{G}$ is hyperlinear if and only if $L_{\chi \circ \sigma}(G)$ is Connes-embeddable for all characters $\chi \in \widehat{A}$. Consequently, $G$ is cocycle-hyperlinear if and only if every central extension of $G$ by a countable abelian group is hyperlinear. Thus, twisted group von Neumann algebras don't just pose an auxiliary variant of the problem, but they precisely provide a way to test the hyperlinearity of central extensions.

This point of view also provides context for this article in the historical search for non-hyperlinear groups. In \cite{Kirschberg94}, Kirchberg showed that his factorization property is equivalent to residual finiteness for property (T) groups. Deligne's non-residually finite extensions of arithmetic symplectic groups \cite{Deligne} thus became natural testing grounds for the lack of finite-dimensional approximations. Failure of residual finiteness or Kirchberg's factorization property does not imply failure of hyperlinearity (see \cite{Thom10}). Nevertheless, these Deligne-type extensions remained one of the principal sources of candidates (see \cite{DeChiffre_Glebsky_Thom_Lubotsky}). In \cite{Becker_Lubotsky}, Becker and Lubotzky showed that an infinite property (T) hyperlinear group cannot be Hilbert-Schmidt stable, and introduced the flexible variant that later became central to candidate constructions. In \cite{Dogon23}, Dogon showed that flexible Hilbert Schmidt stability of $\text{Sp}_{2n}(\Z)$ would force a non-hyperlinear central extension, and then Dogon and Vigdorovich obtained analogous conditional finite central extensions for higher-rank $S$-arithmetic lattices, including $\SL(2, \Z[1/p])$ in \cite{Dogon_Vigdorovich}.

These works explain why the seemingly elementary-looking question of whether finite central extensions of hyperlinear groups are hyperlinear (explicitly posed by Arzhantseva-Belai-Finn-Sell-Glebsky in \cite[Question 4.1]{Arzhantseva_etal}) captures a genuine frontier in our understanding of Connes-embeddability. Our results provide positive answers to this question in several different situations. Recall that a group $G$ is called \textit{treeable} if it admits an essentially free p.m.p. action $G \actson (X,\mu)$ such that the orbit equivalence relation is treeable. Our first main result, that follows from Proposition \ref{Prop: eq conditions} and Theorem \ref{Thm: action with treeable eq relation and amenable stabilizer} is: 
\begin{letterthm}
\label{Main Thm: amenable-by-treeable}
 Let $G$ be a countable discrete group, then the following are equivalent: 
\begin{enumerate}
    \item There is a p.m.p. action $G \actson (X,\mu)$ whose associated equivalence relation $\mathcal R$ is treeable and the stabilizers $G_x$ are amenable for a.e. $x \in X$,
    \item $G$ is an extension of a treeable group by an amenable group,
    \item The quotient $G/\Rad(G)$ is treeable where $\Rad(G)$ is the amenable radical of $G$.
\end{enumerate} 
 Moreover if $G$ satisfies the above equivalent conditions, then $G$ is cocycle-hyperlinear.  
\end{letterthm}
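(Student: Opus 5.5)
The plan has two parts: first the equivalence of (1)–(3), then cocycle-hyperlinearity.

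\emph{Equivalences.} For (3)$\Rightarrow$(2) there is nothing to do: $G$ is an extension of $G/\Rad(G)$ by the amenable group $\Rad(G)$.

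For (2)$\Rightarrow$(1), let $1\to N\to G\to Q\to 1$ with $N$ amenable and $Q$ treeable. Take an essentially free p.m.p. action $Q\actson(X,\mu)$ with treeable orbit equivalence relation, and let $G$ act through the quotient map. The $G$-orbit relation equals the $Q$-orbit relation, so it is treeable. By essential freeness of the $Q$-action, the stabilizer $G_x$ equals $N$ for a.e. $x$, which is amenable.

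For (1)$\Rightarrow$(3), the natural tool is a free product with an amenable factor. Write $\Rad(G)=:A$. First, the stabilizers $G_x$ are amenable, so on the ergodic components the normal subgroup generated is still controlled, although this will need care. I would instead proceed directly. Given the action in (1), consider the diagonal action of $G$ on $X\times Y$, where $G/\Rad(G)\actson Y$ is a free Bernoulli-type action on which $\Rad(G)$ acts trivially. Its stabilizers are $G_x\cap\Rad(G)$. Treeability of $\mathcal R$ passes to this relation, since the relation is amenable-over-$\mathcal R$ at the level of stabilizers. The harder direction is to show that $G_x\subseteq\Rad(G)$ essentially.

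Here I would use the known fact, due to Gaboriau, that treeable relations are strongly ergodic-free/have cost $=\beta_1+1$. The key point is that a group admitting a p.m.p. action with amenable stabilizers and treeable relation has vanishing $L^2$-Betti numbers $\beta_n$ for $n\ge 2$ modulo the amenable part. More concretely, I expect to use the following: if $H\le G$ is the closed normal subgroup generated by a.e. stabilizer, then amenable invariant random subgroups are contained in $\Rad(G)$ (Bader–Duchesne–Lécureux). The stabilizer map $x\mapsto G_x$ is an amenable IRS, so $G_x\le\Rad(G)$ a.e. The action of $G/\Rad(G)$ on the space of $\Rad(G)$-ergodic components (or on $X\times Y$ as above) is then essentially free. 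Its orbit relation is a sub/extension of $\mathcal R$ by amenable pieces, and it is treeable, using that treeability is stable under such amenable-fiber extensions (Gaboriau's results on treeable relations). I expect this last transfer of treeability to be the main obstacle.

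\emph{Cocycle-hyperlinearity.} Given $\omega\in\rmZ^2(G,\T)$, use the action from (1) to form the twisted crossed product $L^\infty(X)\rtimes_\omega G$, which contains $L_\omega(G)$ tracially. Decompose it over the groupoid: it is a twisted von Neumann algebra of the treeable relation $\mathcal R$ with amenable fibres $L_\omega(G_x)$, which are hyperfinite. I would argue by approximating $\mathcal R$ by an increasing union of free products of hyperfinite relations, using treeability. The corresponding algebras are then amalgamated free products over $L^\infty(X)$ of amenable algebras. Such amalgamated free products are Connes-embeddable, by known results on amalgamated free products over hyperfinite/abelian subalgebras of embeddable algebras. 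Inductive limits preserve embeddability, so $L_\omega(G)$ is Connes-embeddable.
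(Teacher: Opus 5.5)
Your treatment of (3)$\Rightarrow$(2) and (2)$\Rightarrow$(1) agrees with the paper. So does your use of Bader--Duchesne--L\'ecureux to get $G_x\le N:=\mathrm{Rad}(G)$ for a.e.\ $x$, and so does the overall shape of the cocycle-hyperlinearity argument.

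The gap is in (1)$\Rightarrow$(3) after that point, exactly at the step you flag as the main obstacle: none of your routes produces an essentially free treeable action of $G/N$.
\begin{itemize}
\item The action of $G/N$ on the space of $N$-ergodic components is essentially free only once you know the $N$-orbits are finite, since then the ergodic components are just the orbits. A priori, if $N$ acts ergodically, that space is a point.
\item On $X\times Y$ the group $G/N$ does not act at all, since $N$ moves the $X$-coordinate. The $G$-stabilizers there are $G_x\cap N=G_x$, so nothing is gained.
\item There is no general result saying treeability descends through ``amenable-fibre'' quotients that you could cite. What is actually true is that the infinite-fibre case cannot occur.
\end{itemize}

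The missing idea is a cost argument that excludes infinite $N$-orbits when $G$ is non-amenable. The set $X_\infty=\{x : |N\cdot x|=\infty\}$ is $G$-invariant. On it, the orbit relation $\mathcal{S}$ of $N$ is aperiodic and hyperfinite. The restriction $\mathcal{R}|_{X_\infty}$ is generated by $\mathcal{S}$ together with the automorphisms $g\in G$, which normalize $\mathcal{S}$ because $N$ is normal.

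Gaboriau's Lemma V.3 lets you cut each $g$ down to a set of arbitrarily small measure while still generating, so $\mathrm{Cost}(\mathcal{R}|_{X_\infty})=1$. A treeable relation of cost $1$ is hyperfinite. A p.m.p.\ action with hyperfinite orbit relation and amenable stabilizers forces $G$ to be amenable (Hjorth). Hence $\mu(X_\infty)=0$.

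Now take a Borel transversal $Y$ for the finite $N$-orbits, with selector $q\colon X\to Y$ and $\nu=q_*\mu$. The induced action of $G/N$ on $(Y,\nu)$ is p.m.p., and it is essentially free because $G_x\le N$. Its orbit relation is $\mathcal{R}|_Y$, which is treeable as a restriction of a treeable relation.

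Your second half otherwise follows the paper (treeing, $\mathcal{R}_k=\mathcal{S}_1\ast\cdots\ast\mathcal{S}_k$, Brown--Dykema--Jung, inductive limits), but two points need fixing.
\begin{itemize}
\item The amalgam must be the isotropy algebra $A=\int^\oplus_X L_{\omega_x}(G_x)\,d\mu(x)$ rather than $L^\infty(X)$, since every piece $A\rtimes\mathcal{S}_i$ contains $A$. It is still hyperfinite, so the conclusion survives.
\item The paper explicitly untwists the part of $\omega$ living on $\mathcal{R}$. It does this with a homomorphic section $\mathcal{R}\to X\rtimes G$ and the vanishing of $\mathrm{H}^2(\mathcal{R},\T)$ for treeable $\mathcal{R}$. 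Your sketch passes over this step.
\end{itemize}
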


The proof of Theorem \ref{Main Thm: amenable-by-treeable} involves the theory of discrete measured groupoids. Indeed, we show that the twisted von Neumann algebra of the transformation groupoid can be written as the von Neumann algebra of a twisted semi-direct product of the equivalence relation acting on the isotropy bundle. The von Neumann algebra of the isotropy bundle is hyperfinite since the stabilizers are amenable, and the treeability of the equivalence relation provides a homomorphic section into the groupoid by the main result of \cite{Kida_treeability}, which allows us to untwist the cocycle (see also \cite{Popa_Vaes_Shlyakhtenko}). Once again using the treeability of the equivalence relation, we reduce Connes-embeddability of the groupoid von Neumann algebra to permanence under amalgamated free products over a hyperfinite von Neumann algebra (see \cite{Brown_Dykema_Jung}). This simultaneously treats all cocycle twists and is hence stronger than proving hyperlinearity of abelian central extensions. The next main result that we prove in Theorem \ref{Thm: SL(n,Z) is cocycle-hyperlinear} and Corollary \ref{Cor: GLn and PSLn} is: 

\begin{letterthm}
\label{Main Thm: central extensions of SL(n,Z)}
    For $n \geq 2$, the groups $\SL(n,\Z)$, $\GL(n,\Z)$ and $\PSL(n,\Z)$ are cocycle-hyperlinear. Moreover, all central extensions of these groups are cocycle-hyperlinear. 
\end{letterthm}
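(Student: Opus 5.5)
The approach is to reduce everything to the case $n \geq 3$ and to control central extensions through Theorem \ref{Main Thm: amenable-by-treeable} together with permanence properties. For $n=2$ the argument is direct. $\SL(2,\Z) \cong \Z/4 *_{\Z/2} \Z/6$ and $\PSL(2,\Z)\cong \Z/2*\Z/3$ are virtually free, hence treeable. $\GL(2,\Z)$ is virtually free as well, and virtually free groups are treeable because a free finite-index subgroup gives a treeable orbit equivalence relation and treeability passes to finite-index overgroups. Theorem \ref{Main Thm: amenable-by-treeable} then applies to each of these groups. A central extension $\widetilde G$ of any of them by an abelian group $A$ is an extension of $G$ by the amenable group $A$, so $\widetilde G/\Rad(\widetilde G)$ is a quotient of the treeable group $G$ by an amenable normal subgroup. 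Since $G$ is virtually free, the amenable radical of $G$ is finite and $G/\Rad(G)$ is again virtually free, hence treeable. Therefore $\widetilde G$ satisfies condition (3) of Theorem \ref{Main Thm: amenable-by-treeable} and is cocycle-hyperlinear.

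For $n\ge 3$, treeability fails because of property (T) and fixed price considerations, so I would instead use residual finiteness together with computations of the Schur multiplier. By Thom's criterion \cite[Lemma 3.4]{Thom10}, $G$ is cocycle-hyperlinear if and only if every central extension of $G$ by a countable abelian group is hyperlinear. The key input is that $H^2(\SL(n,\Z),\T)$ is finite: by van der Kallen and Milnor, $H_2(\SL(n,\Z);\Z)=\Z/2$ for $n\ge5$, with small exceptions for $n=3,4$ that are still finite. Consequently every class $\omega$ is torsion, and up to coboundary $\omega$ takes values in a finite cyclic group $\mu_m$. Then $L_\omega(G)$ is a corner of $L(\widetilde G_\omega)$, where $\widetilde G_\omega$ is the finite central extension by $\mu_m$. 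The universal finite central extension of $\SL(n,\Z)$, $n\ge3$, is the Steinberg group $\St(n,\Z)$, which is known to be residually finite (it embeds in $\St$ of $\Z_p$ completions, or one can argue by the congruence subgroup property applied to the $\Z/2$ extension arising from the double cover of $\SL(n,\mathbb R)$, which restricts faithfully to a residually finite group). Every finite central extension is a quotient of the universal one pushed out along $\mu$, and pushouts of residually finite groups along finite central subgroups remain residually finite. Such groups are therefore sofic and hence hyperlinear, which gives the claim for $\SL(n,\Z)$.

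For $\GL(n,\Z)$, I would use that $\SL(n,\Z)$ has index $2$ and that cocycle-hyperlinearity passes to finite-index overgroups: $L_\omega(\GL)$ is a crossed product of $L_{\omega|}(\SL)$ by a finite group, and Connes-embeddability is preserved by finite amplification. For $\PSL(n,\Z)=\SL/\{\pm1\}$, a $2$-cocycle on $\PSL$ pulls back to one on $\SL$. Moreover, $L_\omega(\PSL)$ is a direct summand of $L_{\omega\circ\pi}(\SL)$, cut down by the central projection associated with $\pm1$. For central extensions $E$ of these groups, Thom's criterion reduces the question to showing that every central extension of $E$ is hyperlinear, and such a group is a central extension of $\SL(n,\Z)$ by a countable abelian group. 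Since $H^2$ is finite, this extension is, up to a finite-index subgroup, a product of an abelian group with a finite central extension. The product is hyperlinear, and hyperlinearity passes to finite-index overgroups.

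I expect the main obstacle to be the residual finiteness, or more generally the hyperlinearity, of the finite central extensions in the low-rank exceptional cases $n=3,4$, where the multiplier is larger. There I would appeal directly to the residual finiteness of $\St(n,\Z)$, which holds because $K_2(\Z)=\Z/2$ is detected by the real spinor cover restricted to the lattice.
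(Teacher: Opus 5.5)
Your $n=2$ argument is fine. Condition (2) of Theorem \ref{Main Thm: amenable-by-treeable} applies directly to a central extension of a virtually free group, so the detour through the amenable radical is unnecessary. Your $\GL$ and $\PSL$ steps for the groups themselves are also sound.

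The genuine gap is at $n=3,4$. You take $\St_n(\Z)$ to be the universal central extension of $\SL(n,\Z)$ for every $n\ge 3$, but this holds only for $n\ge 5$. For $n=3,4$ one has $\rmK_2(n,\Z)\cong\Z/2\Z$, while van der Kallen gives $\rmH_2(\SL(n,\Z),\Z)\cong(\Z/2\Z)^2$. So the universal central extension $\widehat{G}_n$ is a further $\Z/2\Z$-extension of $\St_n(\Z)$, with extra kernel generated by van der Kallen's class $\eta_n$. The classes in $\rmH^2(\SL(n,\Z),\T)\cong(\Z/2\Z)^2$ that are nontrivial on $\eta_n$ do not factor through $\St_n(\Z)$ at all.

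Your proposed fix does not reach these classes. The real spinor cover is a single $\Z/2\Z$-extension, since $\pi_1(\SL(n,\mathbb{R}))\cong\Z/2\Z$, and it only sees the Steinberg symbol. What is missing is that the \emph{whole} Schur multiplier injects into that of a finite quotient. The paper gets this in Proposition \ref{Prop: mod-4 Schur multipliers}, from van der Kallen's computation that $\rmH_2(\SL(n,\Z/4\Z),\Z)\cong(\Z/2\Z)^2$ with $\overline{\eta}_n\neq 1$ and $\overline{z}_n\notin\langle\overline{\eta}_n\rangle$.

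Even for $n\ge 5$, your residual finiteness of $\St_n(\Z)$ is asserted rather than proved. The congruence subgroup property alone does not make central extensions residually finite; Deligne's examples for $\mathrm{Sp}_{2n}(\Z)$ show this. The real content is that $z_{\Z}$ maps to the generator $z_{\Z/4\Z}$ of $\rmK_2(\Z/4\Z)$. Once every $\T$-cocycle is known to be pulled back from $\SL(n,\Z/4\Z)$, your Thom-criterion route and the paper's embedding $L_\omega(G)\hookrightarrow L_{\omega_0}(\SL(n,\Z/4\Z))\otimes L(G)$ (Lemma \ref{Lemma: finite descent}) amount to the same thing.

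Your treatment of central extensions also has a gap. A central extension $F$ of a central extension $E$ of $\SL(n,\Z)$ need not be a central extension of $\SL(n,\Z)$. For example, take $E=\SL(n,\Z)\times\Z^2$ and $F=\SL(n,\Z)\times\mathcal{H}_3$, with $\mathcal{H}_3$ the discrete Heisenberg group. Then the kernel of $F\to\SL(n,\Z)$ is $\mathcal{H}_3$, which is not central. When $E$ is an extension of $\GL(n,\Z)$ or $\PSL(n,\Z)$, $F$ does not even map onto $\SL(n,\Z)$ unless you pass to the index-two preimage or to a fibre product.

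Your claim that such an extension is, up to a finite-index subgroup, a product of an abelian group with a finite central extension is also unjustified. The image of the multiplier need not be a direct summand, and splitting over a finite-index subgroup again needs residual finiteness of the full universal extension.

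The paper avoids this by working with cocycle-hyperlinearity throughout (Proposition \ref{Prop: extensions of perfect groups}):
\begin{enumerate}
\item For perfect $Q$, it writes $E$ as a central quotient of $\widetilde{Q}\times K$.
\item Since $\rmH^2(\widetilde{Q},\T)=0$, hyperlinearity of $\widetilde{Q}$ already gives cocycle-hyperlinearity.
\item It then uses the K\"unneth-type decomposition for direct products and permanence under central quotients.
\end{enumerate}
The $\GL$ case then follows from the index-two preimage together with co-amenable overgroups. The $\PSL$ case follows from the fibre product together with central quotients.
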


Since $\SL(2,\Z)$ is treeable, Theorem \ref{Main Thm: amenable-by-treeable} applies for $n = 2$. However, for $n \geq 3$, the main step is to observe that reduction modulo 4 gives an isomorphism of the Schur multipliers, i.e., 
\begin{align*}
    \rmH_2(\SL(n,\Z), \Z) \cong \rmH_2(\SL(n, \Z/4\Z), \Z)
\end{align*}
For $n \geq 5$, this follows from the Steinberg extension and the standard computation of Milnor's $\rmK_2$ groups for $\Z$ and $\Z/4\Z$ (see \cite{MilnorBook} and \cite{Weibel}). For $n = 3,4$, we use van der Kallen's computation \cite{VanderKallen} of the Schur multipliers and an analogous `reduction modulo 4' observation. Perfectness and the universal coefficient theorem then imply that every scalar 2-cocycle on $\SL(n,\Z)$ is cohomologous to one coming from the finite quotient $\SL(n, \Z/4\Z)$. Finally we observe that for a hyperlinear group, any twisted group von Neumann algebra associated to a cocycle coming from a finite quotient is Connes-embeddable. The statement about $\GL(n,\Z)$, $\PSL(n,\Z)$ and central extensions follow from the more general next result recording several permanence properties of this class.

\begin{letterthm}
    \label{Main thm: permanence properties}
    The class of cocycle-hyperlinear groups satisfy the following permanence properties: 
    \begin{enumerate}
        \item If $(G_n)_{n \in N}$ is an increasing sequence of cocycle-hyperlinear groups, then the directed union $\bigcup_n G_n$ is cocycle-hyperlinear (Lemma \ref{Lemma: directed union})
        \item If $(G_n)_{n \in \N}$ is a sequence of cocycle-hyperlinear groups, the countable direct sum $\bigoplus G_n$ is cocycle-hyperlinear (Corollary \ref{Cor: countable direct sum})
        \item If $N < G$ is either a finite normal subgroup or a central normal subgroup and $G$ is cocycle-hyperlinear, then $G/N$ is cocycle-hyperlinear (Propositions \ref{Prop: quotients by finite subgroups} and \ref{Prop: quotient by central subgroup})
        \item If $Q$ is a perfect cocycle-hyperlinear group and $1 \rightarrow K \rightarrow G \rightarrow Q \rightarrow 1$ is a central extension, then $G$ is cocycle-hyperlinear (Proposition \ref{Prop: extensions of perfect groups})
        \item If $H \leq G$ is a co-amenable subgroup and $H$ is cocycle-hyperlinear, then $G$ is cocycle-hyperlinear. In particular if $G$ has a finite index cocycle-hyperlinear subgroup, it is cocycle-hyperlinear. (Proposition \ref{Prop: coamenable overgroups})
        \item If $(G_n)_{n \in \N}$ is a sequence of cocycle-hyperlinear groups and $L$ is a common amenable subgroup, the amalgamated free product $\ast_L G_n$ is cocycle-hyperlinear. In particular, free products of cocycle-hyperlinear groups are cocycle-hyperlinear. (Proposition \ref{Prop: amalagamted free products over amenable groups}) 
        \item Let $G$ be cocycle-hyperlinear and $H \leq G$ be amenable. Letting $\nu: H \rightarrow G$ be an injective homomorphism, the HNN-extension $\HNN(G,H,\nu)$ is cocycle-hyperlinear. In particular, the Baumslag-Solitar groups $\BS(m,n)$ are cocycle-hyperlinear. (Proposition \ref{Prop: HNN extensions SH} and Corollary \ref{Cor: Baumslag Solitar groups SH})  
    \end{enumerate}
\end{letterthm}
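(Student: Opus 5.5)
Throughout, the key reformulation is Thom's criterion \cite[Lemma 3.4]{Thom10}: $G$ is cocycle-hyperlinear if and only if every central extension of $G$ by a countable abelian group is hyperlinear. I plan to prove each item of Theorem \ref{Main thm: permanence properties} either directly in terms of twisted algebras $L_\omega(G)$, or through this central-extension picture, whichever is more convenient.

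\textbf{Items 1 and 2.}
\begin{itemize}
\item For a directed union, $L_\omega(\bigcup G_n)$ is the weak closure of the increasing union of the subalgebras $L_{\omega|_{G_n}}(G_n)$. Connes-embeddability passes to such unions: each finite set of elements of a dense $*$-subalgebra lies in an embeddable subalgebra, so a diagonal ultrapower argument produces the embedding.
\item For item 2, a cocycle on $G_1\times G_2$ is cohomologous to a product of cocycles on the factors together with a bicharacter-type term $\beta(g_1,h_2)$. I will handle this by realizing $L_\omega(G_1\times G_2)$ inside a central extension picture.
\item An alternative route for item 2: a central extension $\widetilde G$ of $G_1\times G_2$ by $A$ is a quotient of a central product. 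Setting $\widetilde{G_i}$ to be the preimage of $G_i$, the group $\widetilde G$ is a quotient of $\widetilde{G_1}\times\widetilde{G_2}$ by a central subgroup lying inside $A\times A$.
\item The commutator pairing between the two preimages is a bicharacter into $A$. This term is the first technical point.
\item Item 2 then follows from finite sums and item 1.
\end{itemize}

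\textbf{Items 3 and 4.}
\begin{itemize}
\item For a quotient $G\to G/N$, pulling a cocycle back gives $L_{\omega\circ\pi}(G)$. When $N$ is central or finite, $L_\omega(G/N)$ arises as a corner or quotient of the pullback algebra. The conditional expectation onto the part supported on $N$, together with the character decomposition of $L(N)$, identifies $L_\omega(G/N)$ with $pL_{\omega\circ\pi}(G)p$ for a minimal projection $p$ in the centre-related algebra. Connes-embeddability passes to corners.
\item For item 4, when $Q$ is perfect, every central extension of $G$ is, up to a central quotient, pulled back from a central extension of $Q$ through the universal central extension. Central extensions of central extensions of a perfect group are again governed by $\rmH^2(Q)$. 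This reduces item 4 to item 3.
\end{itemize}

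\textbf{Items 5–7.}
\begin{itemize}
\item For item 5, I use the central-extension picture. The preimage $\widetilde H$ of $H$ in $\widetilde G$ is co-amenable in $\widetilde G$, and it is hyperlinear. Hyperlinearity passes to co-amenable overgroups, by inducing and using the amenable Følner-type structure on $\widetilde G/\widetilde H$. I expect this to be citable or provable via the groupoid/crossed-product machinery.
\item For item 6, restricting $\omega$ to the amenable subgroup $L$ gives a hyperfinite algebra $L_\omega(L)$. Then $L_\omega(\ast_L G_n)$ is the amalgamated free product of the algebras $L_\omega(G_n)$ over $L_\omega(L)$, after choosing a cohomologous normal form of the cocycle compatible with reduced words. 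Embeddability of this product follows from Brown–Dykema–Jung \cite{Brown_Dykema_Jung}.
\item For item 7, I realize $L_\omega(\HNN(G,H,\nu))$ as a corner of an amalgamated free product over the hyperfinite algebra $L_\omega(H)\otimes \ell^\infty$. Equivalently, the HNN extension is an amenable-amalgam construction, and the Brown–Dykema–Jung HNN result applies.
\end{itemize}

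The main obstacle is showing that the twisted algebra genuinely decomposes as an amalgamated free product. This requires untwisting $\omega$ along the free part, i.e. showing the cocycle can be normalized so that it is trivial between letters from different factors. I plan to use the Mayer–Vietoris sequence for amalgams and HNN extensions. Any residual class that is not killed this way is absorbed via the bicharacter analysis from item 2.
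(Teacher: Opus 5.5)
Items 1 and 5 are fine and match the paper: Ozawa's criterion for increasing unions, and Thom's criterion plus the fact that co-amenable overgroups of hyperlinear groups are hyperlinear. The other items have genuine gaps.

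\textbf{Item 2.} You flag the bicharacter term but never deal with it, and your alternative route is false. When the commutator pairing is non-trivial, $\widetilde G$ is not a quotient of $\widetilde{G_1}\times\widetilde{G_2}$. For example, the Heisenberg group is a central extension of $\Z\times\Z$ by $\Z$ in which both preimages are abelian, yet it is non-abelian. The missing step is that $b$ factors through $G_{\mathrm{ab}}\times H_{\mathrm{ab}}$. One then embeds $u_{(g,h)}\mapsto x_g\otimes y_h\otimes w_{([g],[h])}$ into $L_{\omega_G}(G)\otimes L_{\omega_H}(H)\otimes L_\rho(G_{\mathrm{ab}}\times H_{\mathrm{ab}})$ with $\rho((a,b),(a',b'))=\overline{\beta(a',b)}$. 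The last factor is embeddable because the group is abelian.

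\textbf{Item 3.} Your corner argument works only for finite $N$, with $p=|N|^{-1}\sum_{n\in N}u_n$. For infinite central $N$, $L(N)$ is a diffuse central subalgebra with no minimal projections. In fact $L_{\omega_0}(G/N)$ is then not a corner at all: for $G=\Z\times\mathbb{F}_2$ and $N=\Z$, every corner has diffuse centre. It is the GNS algebra of the trace $u_g\mapsto 1_N(g)$, i.e.\ the fibre over the trivial character, which is a null set in $\widehat N$. You need an approximation by central projections $1_U$ for shrinking neighbourhoods $U$ of the trivial character, together with weak$^*$-closedness of embeddable traces. The paper packages this through Thom's criterion. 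It views $E=D\times_\omega G$ as a central extension of $G/N$ by $D\times N$ with cocycle $(\omega_0(q,r),n(q,r))$, and uses the character $(d,n)\mapsto d$.

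\textbf{Item 4.} This rests on a false claim. Central extensions of a central extension of a perfect $Q$ are not governed by $\mathrm{H}^2(Q)$: take $Q$ trivial and $G=K=\Z^2$, and consider the Heisenberg group. What is needed is the following. $G$ is a central quotient of $\widetilde Q\times K$ via $(u,k)\mapsto\phi(u)k$. $\widetilde Q$ is cocycle-hyperlinear, because it is hyperlinear by Thom's criterion and $\mathrm{H}^2(\widetilde Q,\T)=0$. Then apply items 2 and 3.

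\textbf{Items 6 and 7.} You have misidentified the obstacle, and your cure would not work. No untwisting is needed for amalgams. For every cocycle, $u_{g_1}\cdots u_{g_n}$ is a scalar multiple of $u_{g_1\cdots g_n}$, and the expectation onto $L_\omega(L)$ kills $u_g$ for $g\notin L$. The normal form theorem plus Kaplansky density then gives $L_\omega(H\ast_L K)\cong L_\omega(H)\ast_{L_\omega(L)}L_\omega(K)$ directly.

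For HNN extensions, the normalization you propose is in general impossible. Take $\Z^2=\mathrm{HNN}(\Z,\Z,\mathrm{id})$ with a rotation cocycle. Here $\mathrm{H}^2(\Z,\T)=0$, yet $u_tu_au_t^*=\lambda u_a$ with $\lambda\neq1$, and this scalar is a cohomology invariant. This is exactly the Mayer--Vietoris contribution of $\mathrm{H}^1$ of the edge group, which your plan cannot kill. Appealing to ``the bicharacter analysis'' does not help here.

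The twist must instead go into the edge map. One has $L_\omega(\Gamma)\cong\mathrm{HNN}(L_\omega(G),L_\omega(H),\Theta_\omega)$ with $\Theta_\omega=\mathrm{Ad}(u_t)$ and $\Theta_\omega(u_h)=d(h)u_{\nu(h)}$ for an explicit scalar function $d$. Britton's lemma gives the trace condition. After that, the Fima--Freslon result on graphs of von Neumann algebras with amenable edge algebras finishes the proof. Alternatively, your corner-of-an-amalgam realization over a hyperfinite algebra followed by Brown--Dykema--Jung would also work.
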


Thom's criterion repeatedly converts the above statements involving extensions into uniform control over all character twists. For extensions of perfect groups, we use the universal Schur covering. For amalgamated free products and HNN-extensions, we use existing results on Connes-embeddability of amalagamated free products over amenable amalgams in \cite{Brown_Dykema_Jung} and more generally, on Connes-embeddability of graphs of von Neumann algebras over amenable edges, as in \cite{Fima_Amaury}. Hence this class of groups is robust under constructions that naturally generate new groups. We remark that our methods however do not deduce non-hyperlinearity of the candidates that appear in \cite{Arzhantseva_etal}, \cite{Dogon23}, \cite{Dogon_Vigdorovich}; in particular stable hyperlinearity of $\SL(2, \Z[1/p])$ and $\text{Sp}_{2n}(\Z)$ remains open.

\textbf{2020 Mathematics Subject Classification:} Primary 46L10, Secondary 22D25, 37A15, 19C09.
\newline
\textbf{Key Words:} Hyperlinear groups, Connes embedding problem, treeable group, soecial linear group, central extension, vanishing cohomology.

\section{Preliminaries}

Let $R$ be the hyperfinite II$_1$ factor and let $\omega$ be a non-principal ultrafilter on $\N$. We call a tracial von Neumann algebra \textit{Connes-embeddable} if it has a trace-preserving embedding into the ultrapower $R^\omega$. Recall from \cite{Radulescu} that a countable discrete group $G$ is called \textit{hyperlinear} if $L(G)$ is Connes-embeddable. The main definition of this article is:  

\begin{defn}
    We will call a countable discrete group \textit{cocycle-hyperlinear} if for any normalized Borel 2-cocycle $\omega \in \rmZ^2(G,\T)$, the twisted group von Neumann algebra $L_\omega(G)$ is Connes-embeddable. 
\end{defn}

The next few subsections very briefly introduce some preliminaries that we use for our arguments. 

\subsection{Discrete measured groupoids and their von Neumann algebras}

We recall that a groupoid $\cG$ over the unit space $X\equiv\cG^{(0)}$ is a small category in which all the morphisms (arrows) are invertible. 
The domain and range maps are denoted by $s,t:\cG\to X$ and the family of composable pairs by $\,\cG^{(2)}\!\subset\cG\times\cG$.  For $g\in\cG\,,\,A, B\subset\cG$ we use the notations
\begin{equation*}\label{botations}
AB\coloneqq\{ab\,:\,a\in A,b\in B, s(a)=t(b)\},
\end{equation*}
\begin{equation*}
Ag\coloneqq A\{g\}\quad \textup{and}\quad gA\coloneqq\{g\}A
\end{equation*}
Note that these sets could be empty in non-trivial situations. Moreover, for $x \in X$, one uses the standard notations 
\begin{equation*}\label{faneaka}
\cG_x\!\coloneqq s^{-1}(x)\,,\quad\cG^x\!\coloneqq t^{-1}(x)\,,\quad\cG_x^x\coloneqq\cG_A\cap\cG^A.
\end{equation*}

From now on, we will suppose that $\cG$ is a groupoid equipped with the structure of a standard Borel space such that the source, target, composition, and inverse maps are Borel, and the source and target maps are countable-to-one. Furthermore, let $\mu$ be a probability measure on the set of units $X$. Then, for any measurable subset $A\subset\cG$, the functions  
$X\ni x\mapsto \#\bigl (s^{-1}(x)\cap A\bigr )$ and $X\ni x\mapsto \#\bigl (t^{-1}(x)\cap A\bigr )$ are measurable. We say that $\mu$ is {\it invariant}, or that $(\cG,\mu)$ is a {\it discrete probability measure-preserving groupoid} if the equality  
$$
\int_{X} \#\bigl (\cG^x\cap A\bigr )d\mu (x)=\int_{X} \#\bigl (\cG_x\cap A\bigr )d\mu (x)
$$
holds for all Borel subsets $A\subset \cG$. If that is the case, then we define the Borel measure $\mu_\cG(A):=\int_{X} \#\bigl (\cG^x\cap A\bigr )d\mu (x)$, which is clearly $\sigma$-finite. 

A \textit{bisection} is a Borel subset where $s$ and $t$ are injective. The bisections modulo null sets and with multiplication as defined above form an inverse semigroup, called the \textit{full pseudogroup} of $\cG$ and denoted by $[[\cG]]$. Each such bisection $V \in [[\cG]]$ defines a measure preserving bijection between $s(V)$ and $t(V)$. 

A countable measured p.m.p. equivalence relation $\cR$ on a standard probability space $(X,\mu)$ is viewed as a discrete measured groupoid which is principal, i.e., $\cR_x^x$ is trivial for all $x \in X$. Given a discrete measured groupoid, there is an associated equivalence relation given by $\{(t(g),s(g) \; | \; g \in \cG\}$ and an associated isotropy bundle $\cG_0$ which is a measured field of isotropy groups $(\cG^x_x)_{x \in X}$. For a p.m.p. action $G \actson (X,\mu)$ of a countable discrete group, the transformation groupoid is a discrete measured groupoid denoted by $X \rtimes G$, where the isotropy groups correspond to the stabilizers of the action.

A \textit{graphing} of $\cR$ is a countable family $\Phi = \{\phi_n\}$ of bisections in $[[\cR]]$ such that the graphs generate $\cR$ as an equivalence relation. The \textit{cost} of the graphing $\Phi$ is given by $\Cost(\Phi) = \sum_n \mu(s(\phi_n))$. The cost of the equivalence relation $\cR$ is given by $\Cost(\cR) = \inf \{\Cost(\Phi) \; | \; \Phi \text{ is a graphing of } \cR\}$. The graphing $\Phi$ is called a \textit{treeing} if every non-empty reduced word $\phi$ formed from the letters in the set $\Phi \cup \Phi^{-1}$ has a null-set of fixed points, i.e., $\mu(\{x \in X \; | \; \phi(x) = x\}) = 0$. The equivalence relation $\cR$ is called \textit{treeable} if $\cR$ admits a treeing. Gaboriau showed in \cite[Theorem IV.1]{Gaboriau_cost} that if $\Phi$ is a treeing of $\cR$, then $\Cost(\cR) = \Cost(\Phi)$. 

An equivalence relation is called \textit{hyperfinite} if it can be written as a countable union of equivalence relations with finite orbits almost everywehere. It turns out that every hyperfinite equivalence relation is treeable by \cite[Proposition III.3(2)]{Gaboriau_cost}. We refer the reader to \cite{Kosaki} for an exposition on free products of measured equivalence relations. Vaguely, the notation $\cR = \cR_1 \ast \cR_2$ means that $\cR_1$ and $\cR_2$ generate $\cR$ and no non-trivial alternating product of non-unit arrows is a unit. We have the following: 

\begin{defn}
    A countable discrete group $G$ is called \textit{treeable} if it admits an essentially free p.m.p. action whose orbit equivalence relation is treeable. $G$ is called \textit{strongly treeable} if every essentially free p.m.p. action of $G$ has a treeable orbit equivalence relation. $G$ is called \textit{anti-treeable} if none of its essentially free p.m.p. actions are treeable. 
\end{defn}

A \textit{normalized scalar 2-cocycl}e on $\cG$ is a Borel map $\omega: \cG \rightarrow \T$ satisftying on composable pairs: 
\begin{align*}
    \omega(g,h)\omega(gh,k) = \omega(g,hk)\omega(hk) \text{ and } \omega(t(g),g) = \omega(g,s(g)) = 1
\end{align*}
We denote the cocycles by $\rmZ^2(\cG,\T)$. A cocycle $\omega$ is a \textit{coboundary} if there is a Borel map $b: \cG \rightarrow \T$ such that $\omega(g,h) = b(g)b(h) \overline{b(gh)}$ for composable pairs a.e. and $b|_X = 1$. The corresponding cohomology group is denoted by $\rmH^2(\cG,\T)$. The following is immediate from \cite{Kida_treeability}. 

\begin{prop}
\label{Prop: treeable groups are SH}
    Let $G$ be a treeable group. Then $G$ is cocycle-hyperlinear.
\end{prop}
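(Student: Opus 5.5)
Fix a normalized $2$-cocycle $\omega\in\rmZ^2(G,\T)$. The goal is to show that $L_\omega(G)$ is Connes-embeddable. The plan is to pass to a twisted crossed product by a treeable free action, untwist the cocycle there using Kida's vanishing result, and then read off Connes-embeddability from the structure of the treeable equivalence relation.

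\textbf{Step 1: pass to the groupoid.} Since $G$ is treeable, choose an essentially free p.m.p.\ action $G\actson(X,\mu)$ whose orbit equivalence relation $\cR$ is treeable. Because the action is free, the transformation groupoid $X\rtimes G$ is identified with $\cR$. Define $\tilde\omega\in\rmZ^2(\cR,\T)$ by $\tilde\omega((gx,x),(hx',x'))=\omega(g,h)$ on composable pairs, where $x=hx'$. This is a normalized Borel cocycle, and its twisted von Neumann algebra is the twisted crossed product
\[
L_{\tilde\omega}(\cR)=L^\infty(X)\rtimes_\omega G .
\]
This algebra contains $L_\omega(G)$ as a trace-preserving subalgebra, spanned by the canonical unitaries $u_g$. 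Hence it suffices to embed $L^\infty(X)\rtimes_\omega G$ into $R^\omega$.

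\textbf{Step 2: untwist via Kida.} The main result of \cite{Kida_treeability} says that $\rmH^2(\cR,\T)=0$ for a treeable p.m.p.\ equivalence relation. I expect this to be the delicate point: one has to check that the Borel cocycle convention used here matches the one in Kida's theorem, and that the vanishing holds for Borel cocycles defined almost everywhere. Granting this, write $\tilde\omega(\gamma,\delta)=b(\gamma)b(\delta)\overline{b(\gamma\delta)}$ with $b|_X=1$. The map sending the partial isometry of a bisection $\gamma$ to $b(\gamma)$ times itself is then a trace-preserving isomorphism
\[
L_{\tilde\omega}(\cR)\cong L(\cR).
\]

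\textbf{Step 3: embed $L(\cR)$.} It remains to show that $L(\cR)$ is Connes-embeddable, and I would give two routes.
\begin{itemize}
\item \emph{Via soficity.} By the same identification with $\omega$ trivial, $L(\cR)\cong L^\infty(X)\rtimes G$. Treeable equivalence relations are sofic, and $L(\cR)$ is Connes-embeddable whenever $\cR$ is sofic (Elek--Lippner, Ozawa). Alternatively, treeable groups are known to be sofic, hence hyperlinear, and Bernoulli-type crossed products of hyperlinear groups embed.
\item \emph{More intrinsically.} Decompose the treeing $\Phi=\{\phi_n\}$ so that each $\phi_n$ generates a hyperfinite subrelation $\cR_n$ whose orbits are ranges of powers of a single partial transformation. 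Then $\cR=\ast_n\cR_n$, and this gives
\[
L(\cR)=\ast_{L^\infty(X)}L(\cR_n).
\]
Each $L(\cR_n)$ is hyperfinite, so the amalgamated free product over the abelian, hence hyperfinite, algebra $L^\infty(X)$ is Connes-embeddable by \cite{Brown_Dykema_Jung}.
\end{itemize}

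Combining the steps, $L_\omega(G)\subseteq L_{\tilde\omega}(\cR)\cong L(\cR)\hookrightarrow R^\omega$ trace-preservingly, for every normalized $\omega$. Hence $G$ is cocycle-hyperlinear.
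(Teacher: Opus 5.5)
Your proof is correct and is essentially the paper's. Both transport $\omega$ to $\tilde\omega$ on the treeable orbit relation $\cR$, untwist it via Kida's vanishing $\rmH^2(\cR,\T)=0$, and embed $L(\cR)$ using Elek--Lippner. Your second route for Step~3 is the case $A=L^\infty(X)$ of the paper's Proposition~\ref{prop: actions of eq relations CE}, where the countable free product is handled by passing to finite subtreeings and taking an increasing union.

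The one thing to discard is the aside about Bernoulli crossed products. The treeable action you fixed need not be Bernoulli, and embeddability of Bernoulli crossed products does not by itself give embeddability of $L^\infty(X)\rtimes G$ for that particular action, which is the algebra you need.
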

\begin{proof}
    Let $\omega \in \rmZ^2(G,\T)$ be a normalized cocycle and let $\cR$ be a treeable orbit equivalence relation of a $G$-action. Define $\widetilde{\omega} \in \rmZ^2(\cR,\T)$ by: 
    \begin{align*}
        \widetilde{\omega}((hgx,gx)(gx,x)) = \omega(h,g)
    \end{align*}
    Notice that up to measure zero, this is well defined by essential freeness of the action. By \cite[Corollary 2.8]{Kida_treeability}, $\rmH^2(\cR,\T) = \{0\}$ and hence $L(\cR) \cong L_\omega(\cR)$. By the main result of \cite{Elek_Lippner}, $L(\cR)$ is Connes-embeddable and the result follows. 
\end{proof}

An action $\alpha$ of a discrete measured groupoid $\cG$ with unit space $X$ on a measured field of tracial (finite) von Neumann algebras $(A_x.\tau_x)_{x \in X}$ with separable preudals is a measured field of *-isomorphisms $\alpha_g: A_{s(g)} \rightarrow A_{t(g)}$ with $\alpha_g \circ \alpha_h = \alpha_{gh}$ for a.e. composable pair and $\alpha_x = \id$ for a.e. $x \in X$. Given a 2-cocycle $\omega: \cG \rightarrow \T$ and letting $A$ be the direct integral $\int^\oplus_X A_x d\mu(x)$, one defines the crossed product $A \rtimes_{\alpha,\rho} \cG$ as in \cite[Definition 4.5]{Chak25}.

\subsection{Homology: perfect groups and Schur multipliers}

We use the standard notation for group homology and cohomology. Recall that $\rmH_1(G,\Z)$ is naturally identified with the \textit{abelianization} of $G$ and $\rmH_2(G,\Z)$ is called the \textit{Schur multipliers} on $G$ and $\rmH_1(G,\Z)$. If $\rmH_1(G,\Z)$ is trivial, the group is called \textit{perfect}. It is well known that for $n \geq 3$, $\SL(n,\Z)$ is perfect. Notice that the universal coefficient theorem gives: 
\begin{align*}
    1 \rightarrow \Ext_{\Z}^1(\rmH_{1}(G,\T)) \rightarrow \rmH^2(G, \T) \rightarrow \Hom(\rmH_2(G,\Z),\T) \rightarrow 1
\end{align*}
Since $\T$ is a divisible abelian group, it is injective as a $\Z$-module, and hence $\Ext^1_{\Z}(\rmH_1(G,\T))$ vanishes, and we obtain the natural isomorphism $\rmH^2(G,\T) \cong \Hom(\rmH_2(G,\Z),\T)$.

We now recall the definition of a Schur covering. Suppose now that $\widetilde{G}$ is a central extension of a countable group $G$ of the form:
\begin{align*}
    1 \rightarrow \rmH_2(G,\Z) \rightarrow \widetilde{G} \xrightarrow{p} G \rightarrow 1,
\end{align*}
where $a \in \rmH^2(G,\rmH_2(G,\Z))$ is the cohomology class associated with this extension. The universal coefficient theorem gives a natural surjection $\phi: \rmH^2(G,\rmH_2(G,\Z)) \rightarrow \Hom(\rmH_2(G,\Z),\rmH_2(G,\Z))$. If $\phi(a)$ is the identity map on $\rmH_2(G,\Z)$, then the extension $\widetilde{G}$ is called a \textit{Schur covering group} of $G$. If $G$ is a perfect group, then there is a unique Schur covering group $\widetilde{G}$ up to isomorphism. In that case, the extension is called the \textit{universal central extension}. It is universal in the following sense: if there is another central extension $1 \rightarrow K \rightarrow E \xrightarrow{q} G \rightarrow 1$, then there is a unique homomorphism $\phi: \widetilde{G} \rightarrow E$ that makes the following diagram commute: 

\[ \begin{tikzcd}
\widetilde{G} \arrow{r}{p} \arrow[swap]{d}{\phi} & G \arrow{d}{\id} \\%
E \arrow{r}{q}& G
\end{tikzcd} \]

It turns out that the extension $\widetilde{G}$ is automatically perfect and even has trivial Schur multiplier group, i.e., $\rmH_2(\widetilde{G},\Z) = \{0\}$.

\subsection{Milnor's K$_2$ computations for special linear groups}
This subsection contains all the necessary homological and K-theoretic preliminaries particularly necessary for Section \ref{Sec: Special Linear groups}. We only introduce the notions we need; hence, this subsection is rather ad-hoc. For a more elaborate exposition, we direct the reader to \cite{MilnorBook}.

For a unital ring $A$, let $\GL(n,A)$ denote the general linear group. Consider the elementary matrices $E_{ij}$ and  $e_{ij}(a) = I + aE_{ij}$. Let $E_n(A) < \GL(n,A)$ be the subgroup generated by $\{e_{ij}(a) \; | \; a \in A\}$. If $n \geq 3$, one checks that for distinct $i,j,k$ we have: 
\begin{align*}
    [e_{ik}(a), e_{kj}(b)] = e_{ij}(ab)
\end{align*}
In particular, $e_{ij}(a)$ is a commutator for all $i,j$ and $a \in A$. Thus, $[E_{n}(A),E_n(A)] = E_n(A)$ implies that $E_n(A)$ is perfect. In this article, we shall only deal with the rings $\Z$ and $\Z/4\Z$. By integer row reduction, one obtains $E_n(\Z) = \SL(n,\Z)$, and by Gaussian elimination over a local ring, one obtains $E_n(\Z/4\Z) = \SL(n, \Z/4\Z)$. Moreover, this shows that the reduction map: 
\begin{align*}
    r_4: \SL(n,\Z) \rightarrow \SL(n,\Z/4\Z)
\end{align*}
is surjective. Indeed, every element of $\SL(n,\Z/4\Z)$ is a product of elementary matrices, and for each $e_{ij}(\overline{a})$ with $\overline{a} \in \Z/4\Z$, one can pick a lift $a \in \Z$ such that $r_4(e_{ij}(a)) = e_{ij}(\overline{a})$. In fact, $\SL(n,\Z)$ is residually finite as the intersection of the kernels of the maps $r_{4^k}: \SL(n,\Z) \rightarrow \SL(n, \Z/ 4^k \Z)$ over all $k$ is trivial.  

The following is directly taken from \cite[Definitions III.5.1 and III.5.2]{Weibel}. For each unital ring $A$, recall that the \textit{Steinberg group} denoted by $\St_n(A)$ is generated by symbols $\{x_{ij}(a) \; | \; a \in A\}$ with the following relations: 
\begin{align*}
    x_{ij}(a)x_{ij}(b) &= x_{ij}(a + b) \text{ for all } i,j \text{ and } a,b \in A \\
    [x_{ij}(a), x_{kl}(b)] &= \begin{cases}
        1 & j \neq k \text{ and } i \neq l \\
        x_{il}(ab) & j = k \text{ and } i \neq l \\
        x_{kj}(-ba) & j \neq k \text{ and } i = l
    \end{cases}
\end{align*}
There is a natural surjection $\phi_n: \St_n(A) \rightarrow E_n(A)$ sending $x_{ij}(a)$ to $e_{ij}(a)$ and the kernel is denoted by $\rmK_2(n,A)$. Since the relations for $n+1$ include the relations for $n$, there is a natural map $\St_n(A) \rightarrow \St_{n+1}(A)$. We denote by $\St(A)$ the direct limits of $\St_n(A)$ as $n \rightarrow \infty$. Similarly, let $E(A)$ denote the direct limits of $E_n(A)$; then there is an induced surjective homomorphism $\phi: \St(A) \rightarrow E(A)$. We denote by $\rmK_2(A)$ the kernel of $\phi$. Recall that for a commutative ring $A$, the \textit{Krull dimension} is given by: 
\begin{align*}
    \dim A = \sup \{r \; | \; P_0 \subset P_1 \subset ... \subset P_r \text{ are strict inclusions of prime ideals}\}
\end{align*}
One can easily check that $\dim \Z = 1$ and $\dim \Z/4\Z = 0$. We need the following standard results for our purposes: 
\begin{prop}
\label{Prop: random facts from Weibel's book}
    Let $A$ be a unital commutative ring. Then: 
    \begin{enumerate}
        \item If $n \geq 5$, then $\St_n(A) \rightarrow E_n(A)$ is the universal central extension of $E_n(A)$. Hence $\rmH_2(E_n(A), \Z) \cong \rmK_2(n,A)$
        \item If $A$ is noetherian with Krull dimension $d$, then $\rmK_2(n,A) \rightarrow \rmK_2(A)$ is an isomorphism for $n \geq d+3$
        \item The groups $\rmK_2(\Z)$ and $\rmK_2(\Z/4\Z)$ are isomorphic to $\Z/2 \Z$, generated by the element:
        \begin{align*}
            z_A = (x_{12}(1)x_{21}(-1)x_{12}(1))^4
        \end{align*}
    \end{enumerate}
\end{prop}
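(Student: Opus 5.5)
These three facts are standard results from algebraic K-theory. The plan is to cite the literature (mainly \cite{Weibel} and \cite{MilnorBook}) and supply the short verifications that adapt them to our setting; no original argument is needed.

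\textbf{Part (1).} We invoke the theorem of Kervaire and Steinberg as presented in \cite[Chapter III.5]{Weibel} and \cite[\S 5]{MilnorBook}. We first note that $\St_n(A)$ is perfect for $n \geq 3$. Indeed, the relation $[x_{ik}(a),x_{kj}(1)] = x_{ij}(a)$ exhibits each generator as a commutator. The kernel $\rmK_2(n,A)$ is central for $n \geq 5$; this is the step where the hypothesis on $n$ enters. The argument shows that any element of the kernel commutes with every $x_{ij}(a)$, by working inside the subgroups generated by the generators of rows and columns avoiding two indices. Such an element is represented by a product that conjugates each $x_{ij}(a)$ to itself, since its image is the identity matrix.

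Universality is then proved as follows. Given a central extension $1\to C\to E\to E_n(A)\to 1$, one lifts each $e_{ij}(a)$ by choosing commutator lifts of the form
\begin{align*}
y_{ij}(a) = [\tilde e_{ik}(a),\tilde e_{kj}(1)].
\end{align*}
These lifts are independent of the choices because $C$ is central. One then checks that they satisfy the Steinberg relations. The classical identification of the universal central extension of a perfect group then gives $\rmH_2(E_n(A),\Z)\cong \rmK_2(n,A)$, for example via Hopf's formula.

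\textbf{Part (2).} This is van der Kallen's (Dennis--Suslin type) stability theorem for $\rmK_2$ of noetherian rings, which states surjectivity for $n\geq d+2$ and injectivity for $n\geq d+3$. We cite it from \cite{Weibel}, which points to van der Kallen's stability results, and \cite{VanderKallen}, and do not reprove it. For the cases actually used, one could alternatively note that $\Z$ and $\Z/4\Z$ have stable rank at most $2$. The classical surjective and injective stability via stable rank then applies directly.

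\textbf{Part (3).} The statement $\rmK_2(\Z)\cong\Z/2\Z$ is Milnor's theorem \cite[\S 10]{MilnorBook}. Its proof uses the Euclidean algorithm to show that every element of $\rmK_2$ is a power of the symbol $\{-1,-1\}$, and then detects nontriviality by mapping $\St(\Z)\to\St(\mathbb{R})$ and using the double cover of $\SL(2,\mathbb{R})$ (or the spinor norm). The identity
\begin{align*}
\{-1,-1\} = (x_{12}(1)x_{21}(-1)x_{12}(1))^4
\end{align*}
follows because $w_{12}(1)=x_{12}(1)x_{21}(-1)x_{12}(1)$, $h_{12}(-1)=w_{12}(-1)^{2}\cdots$, and $\{-1,-1\}=h_{12}(-1)^2=w_{12}(1)^4$.

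For $\Z/4\Z$, which is a local ring, $\rmK_2$ is generated by Steinberg symbols $\{u,v\}$ with $u,v\in\{\pm1\}$, so it is generated by $\{-1,-1\}$. This element is nontrivial, a result due to Dennis--Stein, who show $\rmK_2(\Z/4)\cong\Z/2$; we cite \cite{Weibel}, where this computation of $\rmK_2(\Z/p^k)$ appears. The step requiring the most care is matching the element $z_A$ with the symbol $\{-1,-1\}$ under the sign conventions of the Steinberg relations above. We verify this by direct matrix computation in $\St_2$ lifted to $\St_3$.
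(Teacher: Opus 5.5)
Your route is the paper's own. The paper gives no argument here and simply cites Weibel (and Milnor) for (1)--(3), and the results you invoke are the right ones. The trouble is the argument you supply yourself for centrality in (1), which fails as written.

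Knowing that $y\in{\rmK}_2(n,A)$ maps to the identity matrix says nothing about how $y$ conjugates $x_{ij}(a)$, because $\phi_n$ is not injective. Steinberg's argument needs a subgroup such as $P_m=\langle x_{im}(a)\rangle$ on which $\phi$ is injective and which $y$ normalizes. That requires writing $y$ as a word avoiding the index $m$. In the stable group ${\St}(A)$ a fresh index always exists, which is why Steinberg's theorem concerns ${\rmK}_2(A)$. In ${\St}_n(A)$ no such index need exist, and guaranteeing one is a surjective-stability statement.

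For commutative $A$, centrality of ${\rmK}_2(n,A)$ in ${\St}_n(A)$ is a separate theorem of van der Kallen, valid already for $n\geq 3$. For $A=\Z,\Z/4\Z$ you can instead deduce it from (2). If $y\in{\rmK}_2(n,A)$ and $g\in{\St}_n(A)$, then $[y,g]$ lies in ${\rmK}_2(n,A)$ and becomes trivial in ${\St}(A)$, where ${\rmK}_2(A)$ is central. Injective stability then gives $[y,g]=1$.

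Relatedly, $n\geq 5$ does not enter at centrality. It enters in Kervaire's step, which you compress into \emph{one then checks}. For example, to show that lifts of $e_{ij}(a)$ and $e_{kl}(b)$ with $i,j,k,l$ distinct commute in a central extension, one rewrites the second as a commutator through a fifth index. This is exactly the relation that breaks at $n=4$: the extra class $\eta_4=[x_{12},x_{34}]$ in the paper's treatment of $n=3,4$ is such a commutator. Likewise ${\rmK}_2(3,\Z)\cong\Z/2\Z$ is central in ${\St}_3(\Z)$, being a normal subgroup of order $2$, yet ${\St}_3(\Z)$ is not universal since ${\rmH}_2({\SL}(3,\Z),\Z)\cong(\Z/2\Z)^2$. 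So your sketch misplaces precisely the distinction that forces the paper to handle $n=3,4$ separately.

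There are also two minor points in (3):
\begin{enumerate}
\item With $h_{12}(u)=w_{12}(u)w_{12}(-1)$ and $\{u,v\}=h_{12}(uv)h_{12}(u)^{-1}h_{12}(v)^{-1}$, one gets $\{-1,-1\}=h_{12}(-1)^{-2}=w_{12}(1)^4$ directly. Your middle term $h_{12}(-1)^2$ equals $w_{12}(1)^{-4}$, and it agrees with $w_{12}(1)^4$ only because $\{-1,-1\}^2=1$.
\item No matrix computation can verify an identity in the Steinberg group, since $z_A$ maps to the identity matrix. The matching must come from the Steinberg relations, as your $w$--$h$ computation does.
\end{enumerate}
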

For the proofs of the above standard facts, we refer the reader to \cite{Weibel}. In particular, see \cite[Theorem III.5.4 and Proposition III.5.5.1]{Weibel} for 1, \cite[Remark III.5.5.2]{Weibel} for 2 and \cite[Example III.5.2.2, Theorem III.5.11.1 and Excercise III.5.13]{Weibel} for 3. 

\subsection{Actions with amenable stabilizers}

In this subsection, we prove the equivalence of the three conditions in Theorem \ref{Main Thm: amenable-by-treeable}. For a countable measured equivalence relation $\cR$ on a standard probability space $(X,\mu)$, we say that a Borel p.m.p. automorphism $\theta \in \Aut(X,\mu)$ \textit{normalizes} $\cR$ if $\theta$ satisfies: $(x,y) \in \cR \iff (\theta(x),\theta(y)) \in \cR$ for a.e. $x,y \in X$.

We need the following ad-hoc lemma to prove the equivalence.  

\begin{lem}
\label{Lemma: ad-hoc cost lemma}
    Let $\cR$ be a treeable p.m.p. equivalence relation on $(X,\mu)$ and let $\cS \subseteq \cR$ be an aperiodic hyperfinite equivalence relation. If $\cR$ is generated by $\cS$ and countably many p.m.p. automorphisms $\theta_n \in \Aut(X,\mu)$ that normalize $\cS$, then $\cR$ is hyperfinite. 
\end{lem}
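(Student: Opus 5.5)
The plan is to prove the statement by a cost argument. We first show that $\Cost(\cR) = 1$ on almost every ergodic component, and then invoke the rigidity of treeable relations. The lemma is named an "ad-hoc cost lemma", which points to exactly this route.

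\emph{Step 1 (reduction to finite measure pieces and cost one).} The standard fact we want to use is the following: if $\cS$ is aperiodic hyperfinite, $\theta$ normalizes $\cS$, and $\cR'$ is generated by $\cS$ and $\theta$, then $\Cost(\cR') = 1$. This is Gaboriau's result that relations generated by a normalized aperiodic subrelation have cost one. For completeness, here is how one proves it directly. Given $\varepsilon>0$, aperiodicity of $\cS$ lets us choose a Borel set $A$ with $\mu(A)<\varepsilon$ that meets every $\cS$-class; this uses a marker lemma for aperiodic relations. For $x\in X$ there is $y\in A$ with $(\theta(x),y)\in\cS$. Since $\theta$ normalizes $\cS$, the map $\theta$ carries $\cS$-classes onto $\cS$-classes. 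It therefore suffices to add the graph of $\theta$ restricted to $\theta^{-1}(A)$: a point $x$ is linked to $\theta(x)$ as follows.
\begin{itemize}
\item Move within the $\cS$-class of $x$ to a point $x'\in\theta^{-1}(A)$; such a point exists because $\theta^{-1}(A)$ meets every $\cS$-class.
\item Apply $\theta$ to reach $\theta(x')$, which lies in the $\cS$-class of $\theta(x)$.
\item Move within that class to $\theta(x)$.
\end{itemize}
Hence $\cR'$ is generated by $\cS$ together with a partial map of cost $<\varepsilon$. Since $\Cost(\cS)=1$ for aperiodic hyperfinite $\cS$, we get $\Cost(\cR')\le 1+\varepsilon$.

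For countably many $\theta_n$ we repeat this with sets $A_n$ of measure $<\varepsilon 2^{-n}$. This gives $\Cost(\cR)\le 1+\varepsilon$ for every $\varepsilon$, and aperiodicity gives $\Cost(\cR)\ge 1$. So $\Cost(\cR)=1$. The same argument applied to the ergodic decomposition, or to restrictions to invariant sets, shows that the cost is $1$ on every $\cR$-invariant piece. In particular the cost is "fiberwise" $\mu(B)$ for each invariant $B$.

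\emph{Step 2 (treeable plus cost one implies hyperfinite).} Let $\Phi$ be a treeing of $\cR$. By Gaboriau's theorem \cite[Theorem IV.1]{Gaboriau_cost}, $\Cost(\Phi)=\Cost(\cR)=1$. We then use the known fact (Gaboriau, \cite{Gaboriau_cost}) that an aperiodic treeable relation of cost one is hyperfinite. Equivalently, a treeing of an aperiodic relation with $\Cost(\Phi)=1$ has almost every tree with at most two ends, since the expected degree is $2$. Such relations are hyperfinite: one-ended or two-ended locally finite trees yield hyperfiniteness via the end structure or via a Borel $\Z$-ordering. The one subtlety is that cost one must hold on every invariant subset. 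Otherwise a non-ergodic $\cR$ could have a cost $>1$ part compensated elsewhere; but aperiodicity forces cost $\ge\mu(B)$ on every invariant $B$, so a global cost of $1$ already forces each piece to have cost exactly $\mu(B)$. Step 1 also gives this directly.

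\emph{Main obstacle.} The delicate point is Step 1: choosing the small complete sections correctly and checking that restricting the graphs of the $\theta_n$ still generates $\cR$ together with $\cS$. This uses the normalizing hypothesis in the form $\theta_n(\cS\text{-class})=\cS\text{-class}$ almost everywhere. One must also make sure that everything is done up to null sets with Borel choices, which is possible by Lusin–Novikov uniformization. The passage from cost one to hyperfiniteness for treeable relations is quoted from \cite{Gaboriau_cost}.
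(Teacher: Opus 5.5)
Your proposal is correct and follows essentially the same route as the paper. The paper gets $\Cost(\cR)=1$ by citing Gaboriau's Lemma V.3 to replace each $\theta_n$ by a restriction $\theta_n|_{A_n}$ with $\mu(A_n)<\varepsilon/2^n$, which is exactly what you prove by hand using small complete sections of $\cS$ and the normalizing property, and it then concludes hyperfiniteness from treeability plus cost one via Gaboriau's Corollary IV.2 (your extra remark on the non-ergodic case is correct and is already covered by that cited result).
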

\begin{proof}
    Since $\cS$ is hyperfinite, choose a graphing $\Phi$ of $\cS$ with cost 1 and let $\epsilon > 0$. Notice that $\cS \cap \theta_n \cS \theta_n^{-1} = \cS$ and is aperiodic. By applying \cite[Lemma V.3]{Gaboriau_cost}, there exists Borel subsets $A_n \subset X$ with $\mu(A_n) < \epsilon/ 2^n$ for each $n$ such that $\{\theta_n|_{A_n}\}$ and $\cS$ still generate $\cR$. As a consequence: 
    \begin{align*}
        1 \leq \Cost(\cR) \leq \Cost(\Phi) + \sum_n \mu(A_n) < 1 + \epsilon
    \end{align*}
    Thus $\Cost(\cR) = 1$ and hence $\cR$ is hyperfinite by \cite[Corollary IV.2]{Gaboriau_cost}. 
\end{proof}

\begin{prop}
\label{Prop: eq conditions}
    Let $G$ be a countable discrete group, then the following are equivalent:
    \begin{enumerate}
        \item There is a p.m.p. action $G \actson (X,\mu)$ whose stabilizers $G_x$ are amenable for a.e. $x \in X$
    \item $G$ is an extension of a treeable group by an amenable group
    \item The quotient $G/\Rad(G)$ is treeable where $\Rad(G)$ is the amenable radical of $G$.
    \end{enumerate}
\end{prop}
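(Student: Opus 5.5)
The implication (3)$\Rightarrow$(2) is immediate, since $\Rad(G)$ is an amenable normal subgroup. I read condition 1 as in Theorem \ref{Main Thm: amenable-by-treeable}: the orbit equivalence relation $\cR$ is treeable and a.e. stabilizer is amenable.

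For (2)$\Rightarrow$(1), let $N \lhd G$ be amenable with $G/N$ treeable. Take an essentially free p.m.p. action $G/N \actson (X,\mu)$ with treeable orbit relation, and let $G$ act through the quotient map $G \to G/N$. The orbit relation does not change, so it is still treeable. The stabilizers are a.e. equal to $N$, which is amenable.

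For (1)$\Rightarrow$(3), let $G \actson (X,\mu)$ have treeable orbit relation $\cR$ and amenable stabilizers. I would first show that $G_x \le \Rad(G)$ for a.e. $x$. The map $x \mapsto G_x$ pushes $\mu$ forward to an invariant random subgroup of $G$ supported on amenable subgroups. I would invoke the known fact (Bader--Duchesne--L\'ecureux) that an amenable invariant random subgroup is supported on subgroups of the amenable radical. This is the step I expect to be the main obstacle. If that citation is unavailable, I would instead try to show directly that the normal subgroup generated by the stabilizers of an ergodic component is amenable.

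Next let $M = \Rad(G)$ and let $\cS \subseteq \cR$ be the orbit relation of $M$. Since $M$ is amenable, $\cS$ is hyperfinite. Since $M$ is normal, every $g \in G$ normalizes $\cS$, and $\cS$ is $G$-invariant. Split $X = X_\infty \sqcup X_{\rm fin}$ according to whether the $\cS$-class of a point is infinite or finite. Both pieces are $G$-invariant.

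Suppose $\mu(X_\infty)>0$. On $X_\infty$, the relation $\cR|_{X_\infty}$ is treeable, and it is generated by the aperiodic hyperfinite relation $\cS$ together with the countably many automorphisms $g \in G$, each of which normalizes $\cS$. Lemma \ref{Lemma: ad-hoc cost lemma} then shows that $\cR|_{X_\infty}$ is hyperfinite. A p.m.p. action with amenable orbit relation and amenable stabilizers forces the acting group to be amenable. Hence $G$ is amenable, so $G/\Rad(G)$ is trivial and in particular treeable.

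Otherwise, after discarding a null set, $\cS$ has finite classes. Then $\cS$ has a Borel transversal and $Y = X/\cS$ is a standard probability space. The group $G/M$ acts on $Y$ in a measure-preserving way, using the normalized measure on $Y$ (or the restriction of $\mu$ to a transversal). This action is essentially free because the stabilizer of the class $[x]$ is $G_x M / M$, and $G_x M = M$ since $G_x \le M$. Its orbit relation is isomorphic to the restriction of $\cR$ to a Borel transversal of $\cS$. By Gaboriau, treeability passes to restrictions to positive-measure subsets, so this restriction is treeable. Therefore $G/\Rad(G)$ admits an essentially free p.m.p. action with treeable orbit relation, i.e., it is treeable. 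If both pieces occur, the first case already gives amenability of $G$, which settles it.
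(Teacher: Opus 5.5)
Your proposal is correct and follows essentially the same route as the paper: the quotient action for (2)$\Rightarrow$(1), Bader--Duchesne--L\'ecureux to place a.e.\ stabilizer inside $\Rad(G)$, and Lemma~\ref{Lemma: ad-hoc cost lemma} together with the Zimmer/Hjorth criterion to show that infinite $\Rad(G)$-orbits on a positive-measure set force $G$ to be amenable; otherwise you use the essentially free action of $G/\Rad(G)$ on the space of finite $\Rad(G)$-orbits, whose orbit relation is the treeable restriction $\cR|_Y$. Your reading of condition 1 as including treeability of $\cR$ is exactly what the paper's proof uses; as literally stated, condition 1 holds for every group via Bernoulli shifts, so the equivalence needs this hypothesis.
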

\begin{proof}
    For $2 \implies 1$, suppose that we have an exact sequence: 
    \begin{align*}
        1 \rightarrow N \rightarrow G \rightarrow Q \rightarrow 1
    \end{align*}
    where $Q$ is treeable and $N$ is amenable. Let $Q \actson (X,\mu)$ be an essentially free p.m.p. action that is treeable. Let $G \actson (X,\mu)$ by the quotient map. Since the $Q$-action is essentially free, the stabilizers $G_x$ for the $G$-action are isomorphic to $N$ for a.e. $x \in X$. Notice that $3 \implies 2$ is immediate as $G$ is an extension of $G/\Rad(G)$ by $\Rad(G)$. We now prove that $1 \implies 3$. Let $N = \Rad(G)$ and let $\cR$ be the orbit equivalence relation of $G \actson (X,\mu)$. If $G$ is amenable then there is nothing to prove, so we assume that $G$ is non-amenable. By \cite[Corollary 1.5]{BaderDuchesneLecreux}, for a.e. $x \in X$, the stabilizer subgroup $G_x$ is in $N$. Let $\cS$ be the orbit equivalence relation of $N \actson (X,\mu)$, which is hyperfinite because $N$ is amenable. Notice that the group elements $g \in G$ normalize $\cS$ as $N$ is a normal subgroup. 
    
    Let $X_\infty \subset X$ be the Borel subset $\{x \in X \; | \; |N \cdot x| = \infty\}$. Suppose that $\mu(X_\infty) > 0$, then $\cS|_{X_\infty}$ is aperiodic and hyperfinite and $\cR|_{X_\infty}$ is still treeable. The automorphisms given by the group elements $g \in G$ normalize $\cS|_{X_\infty}$ and together generate $\cR|_{X_\infty}$. By Lemma \ref{Lemma: ad-hoc cost lemma}, the orbit equivalence relation $\cR|_{X_\infty}$ is hyperfinite. Since the orbit equivalence relation is hyperfinite and the stabilizers are amenable a.e., in fact $G$ is amenable by \cite[Lemma 3.6]{Hjorth}. This gives a contradiction and hence $\mu(X_\infty) = 0$ and hence the $N$-orbits are finite a.e.  

    So the action $N \actson (X,\mu)$ has finite orbits, and hence a fundamental domain $Y \subset X$. Let $q: X \rightarrow Y$ be the Borel selector map and let $\nu = q_* \mu$ be the pushforward measure. Since $N$ is normal, we get an action of $G/N$ on $(Y,\nu)$ given by $gN \cdot q(x) = q(gx)$. Since the stabilizers of $G \actson X$ lie in $N$, the action of $G/N$ is free. But notice that the orbit equivalence relation of $G/N \actson (Y,\nu)$ is precisely $\cR|_{Y}$, which is treeable for the measure class of $\mu|_Y$. One checks now that since $\cS$ is p.m.p., for each Borel subset $E \subset X$ we have: 
    \begin{align*}
        \nu(E) = \mu(q^{-1}(E)) = \int_{E} |N \cdot y| \; d\mu(y)
    \end{align*}
    Since $|N \cdot x|$ is finite for a.e. $x \in X$, this shows that $\nu$ is equivalent to $\mu|_Y$, completing the proof.
\end{proof}

In the next section we deal with cocycle-hyperlinearity of groups satisfying these equivalent conditions, hence proving Theorem \ref{Main Thm: amenable-by-treeable}.

\section{Amenable-by-treeable groups}

Notice that if $\omega \in \rmZ^2(G,\T)$ and $\alpha: G \actson (X,\mu)$ is a p.m.p. action then this gives a cocycle action $(\alpha,\omega): G \actson (X,\mu)$ such that the twisted crossed product $L^\infty(X,\mu) \rtimes_{(\alpha,\omega)} G$ is defined by the relations: 
\begin{align*}
    u_g f u_g^* = g \cdot f \text{ and } u_gu_h = \omega(g,h) u_{gh}
\end{align*}
for all $g,h \in G$ and $f \in L^\infty(X,\mu)$. One checks that $L_\omega(G)$ tracially embeds into $L^\infty(X,\mu) \rtimes_{(\alpha,\omega)} G$. There is no essential freeness assumption required here. For a group action $G \actson (X,\mu)$, we shall denote the stabilizer at $x$ by $G_x$ and the isotropy bundle by $G_0 = (G_x)_{x \in X}$.

\begin{lem}
\label{Lemma: twisted transformation groupoid vna}
    Let $G \actson (X,\mu)$ be a p.m.p. action and $X \rtimes G$ be the transformation groupoid. Let $\omega \in \rmZ^2(X \rtimes G,\T)$ be a 2-cocycle and denote the restriction to $G_x$ by $\omega_x$. Let $\cR$ be the associated orbit equivalence relation and suppose that $\cR$ is treeable. Then there is an action $\delta: \cR \actson G_0$ such that as measured groupoids $X \rtimes G \cong G_0 \rtimes_\delta \cR$. Moreover, $\delta$ induces an untwisted action $\alpha$ of $\cR$ on the von Neumann algebra as below, and we have:  
    \begin{align*}
        L_{\omega}(X \rtimes G) \cong \left( \int^{\oplus}_{X} L_{\omega_x}(G_x) d\mu(x)\right) \rtimes_{\alpha} \cR
    \end{align*}
\end{lem}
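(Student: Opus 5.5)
The plan has three steps: produce a Borel homomorphic section $\sigma:\cR\to X\rtimes G$ of the quotient map, use it to split the groupoid as a semi-direct product, and then use it to split the twisted von Neumann algebra as a crossed product. The key input is the main result of \cite{Kida_treeability}. There, treeability of $\cR$ gives such a section, i.e.\ a Borel map with $\sigma(y,x)\in (X\rtimes G)$ going from $x$ to $y$ and $\sigma(z,y)\sigma(y,x)=\sigma(z,x)$ almost everywhere. Concretely, I would fix a treeing $\Phi=\{\phi_n\}$ of $\cR$. For each $\phi_n$ I would choose a Borel lift to $X\rtimes G$, by picking for each $x\in s(\phi_n)$ some $g\in G$ with $gx=\phi_n(x)$, via a Borel enumeration of $G$. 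I would then extend along the unique reduced word in the treeing joining $x$ to $y$. The tree property makes this well defined and multiplicative a.e. The same argument (Kida's vanishing $\rmH^2(\cR,\T)=0$, as already used in Proposition \ref{Prop: treeable groups are SH}) also lets me modify $\sigma$ so that $\omega(\sigma(z,y),\sigma(y,x))=1$ a.e. This is done by replacing the cocycle $\sigma^*\omega\in\rmZ^2(\cR,\T)$ with a coboundary $b$ and rescaling the corresponding unitaries by $\overline{b}$.

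Next I would define $\delta$ by conjugation: for $(y,x)\in\cR$, set $\delta_{(y,x)}:G_x\to G_y$, $\delta_{(y,x)}(h)=\sigma(y,x)\,h\,\sigma(y,x)^{-1}$. This is a measurable field of group isomorphisms, multiplicative in $(y,x)$ because $\sigma$ is a homomorphism. The map $G_0\rtimes_\delta\cR\to X\rtimes G$, $(h,(y,x))\mapsto \sigma(y,x)h$ for $h\in G_x$, is then a Borel bijection of groupoids. Injectivity holds because $\sigma(y,x)h=\sigma(y,x)h'$ forces $h=h'$. Surjectivity holds because any arrow $k$ from $x$ to $y$ equals $\sigma(y,x)\big(\sigma(y,x)^{-1}k\big)$ with the second factor in $G_x$. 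It preserves the Haar measures, since both are counting measure over $\mu$.

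For the von Neumann algebra, I would take $u_k$ ($k\in[[X\rtimes G]]$) generating $L_\omega(X\rtimes G)$, and set $A=\int^\oplus_X L_{\omega_x}(G_x)\,d\mu(x)$, generated by the $u$'s of bisections inside $G_0$. For $(y,x)\in\cR$, write $v$ for the partial isometries attached to bisections $\sigma(V)$, $V\in[[\cR]]$. By the normalization of $\sigma$ above, $v_Vv_W=v_{VW}$ without scalars. So $V\mapsto v_V$ is an untwisted representation of $[[\cR]]$, and $\alpha_V=\mathrm{Ad}(v_V)$ restricted to $A$ acts fibrewise as $u_h\mapsto \omega(\sigma,h)\overline{\omega(\sigma h\sigma^{-1},\sigma)}\,u_{\delta(h)}$. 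This is a genuine action of $\cR$ on the field $(L_{\omega_x}(G_x))_x$; the scalar correction is a $1$-cocycle-type twist that composes correctly by the cocycle identity. The pair $(A,v)$ then satisfies the covariance relations of $A\rtimes_\alpha\cR$ in the sense of \cite[Definition 4.5]{Chak25}. Together the two families generate $L_\omega(X\rtimes G)$, by the decomposition $k=\sigma(y,x)h$. The trace satisfies $\tau(a v_V)=\int_{\{x:V x=x\}}\tau_x(a_x)$, which is exactly the crossed-product trace, since $\sigma(x,x)=x$. So the universal property gives a trace-preserving, hence injective, normal surjection, and therefore an isomorphism.

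The main obstacle is the first step: getting a section that is simultaneously Borel, multiplicative a.e., and $\omega$-untwisted. The rest consists of bookkeeping of cocycle identities. If the direct treeing construction is messy to make multiplicative, I would instead quote Kida's theorem (lifting treeable $\cR$ into any groupoid extension) as a black box, and handle the untwisting separately via $\rmH^2(\cR,\T)=0$.
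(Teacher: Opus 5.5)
Your proposal is correct and follows essentially the same route as the paper. The shared steps are: a homomorphic section of $X\rtimes G\to\cR$ coming from treeability (the paper simply cites \cite{Popa_Vaes_Shlyakhtenko} where you sketch the treeing construction); the conjugation action $\delta$ together with the scalar-corrected action $\alpha_{(y,x)}(u^x_k)=d(k,(y,x))\,u^y_{\delta_{(y,x)}(k)}$; identification with the crossed product via its defining relations and generation by $A$ and the section's partial isometries; and untwisting via $\rmH^2(\cR,\T)=\{0\}$. The only difference is the order in which you untwist. You rescale the unitaries $u_{\sigma(V)}$ by $\overline{b}$ before forming the crossed product; strictly speaking it is these unitaries, not $\sigma$ itself, that get modified. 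The paper instead first obtains $A\rtimes_{(\alpha,\rho)}\cR$ and then passes to the cohomologous cocycle $\omega'$ built from $a=\overline{b\circ q}$, which amounts to the same thing.
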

\begin{proof}
    Since $\cR$ is treeable, by \cite[Proposition 6.5]{Popa_Vaes_Shlyakhtenko}, there is a homomorphic section $j: \cR \rightarrow X \rtimes G$ and an action $\delta: \cR \actson G_0$ given by $\delta_{(y,x)}: G_x \rightarrow G_y$ and $\delta_{(y,x)}(g) = j(y,x) g j(y,x)^{-1}$ for $g \in G_x$, such that the measured groupoids $X \rtimes G$ and $G_0 \rtimes \cR$ are isomorphic. Let $A_x = L_{\omega_x}(G_x)$ and $A = \int^{\oplus}_{X} A_xd\mu(x)$. For each $x \in X$ and $k \in G_x$, let us denote by $u^x_k$ the corresponding group unitary in $A_x$. Denote the image of an element $(g,(x,y)) \in G_0 \rtimes_\delta \cR$ in the twisted convolution algebra of the groupoid by $[g,(x,y)]$. For all $(y,x) \in \cR$ and $k \in G_x$, we define $d(k, (y,x)) = \omega \left( j(y,x), (k,(x,x)) \right) \overline{\omega\left((\delta_{(y,x)}(k),(y,y)),j(y,x)) \right)}$. Then we calculate: 
    \begin{align*}
        [j(y,x)]\cdot[(k,(x,x))] &= \omega(j(y,x),(k,(x,x)))[j(y,x)\cdot (k,(x,x))] \\ &= \omega(j(y,x),(k,(x,x)))[(\delta_{(y,x)}(k),(y,y))\cdot j(y,x)] \\ &= d(k,(y,x))\omega\left((\delta_{(y,x)}(k),(y,y)),j(y,x)) \right)[(\delta_{(y,x)}(k),(y,y))\cdot j(y,x)] \\&= d(k,(y,x))[(\delta_{(y,x)}(k),(y,y))]\cdot[j(y,x)]
    \end{align*}
    Now consider $x \in X$ and $k,l \in G_x$. Using the previous equation multiple times, we calculate: 
    \begin{align*}
        [j(y,x)]\left([(k,(x,x))][(l,(x,x))]\right) &= [j(y,x)]\omega_x(k,l)[(kl,(x,x))] \\ &= \omega_x(k,l)d(kl,(y,x))[(\delta_{(y,x)}(kl), (y,y))][j(y,x)] \text{ and} 
        \\
        \left([j(y,x)]
         [(k,(x,x))]\right) [(l,(x,x))] &= d(k,(y,x))[(\delta_{(y,x)}(k),(y,y))][j(y,x)][(l,(x,x))] \\ = d(k,(y,x)) &d(l,(y,x))[(\delta_{(y,x)}(k),(y,y))][(\delta_{(y,x)}(l),(y,y))][j(y,x)] \\ = d(k,(y,x))&d(l,(y,x)) \omega_y(\delta_{(y,x)}(k) ,\delta_{(y,x)}(l))[(\delta_{(y,x)}(kl),(y,y))][j(y,x)] 
    \end{align*}
    By associativity, we get for $x, y \in X$ and $k,l \in G_x$ that: 
    \begin{align}
    \label{Eq for d and omega}
        \omega_x(k,l)d(kl,(y,x)) = d(k,(y,x))d(l,(y,x))\omega_y(\delta_{(y,x)}(k),\delta_{(y,x)}(l))
    \end{align}
    Now let us define $\alpha_{(y,x)}: A_x \rightarrow A_y$ by $\alpha_{(y,x)}(u^x_k) = d(k,(y,x)) u^y_{\delta_{(y,x)}(k)}$. Notice that: 
    \begin{align*}
        \alpha_{(y,x)}(u^x_k)\alpha_{(y,x)} (u^x_l) &= d(k,(y,x))d(l,(y,x))\omega_y(\delta_{(y,x)}(k),\delta_{(y,x)}(l)) u^y_{\delta_{(y,x)}(kl)} \\&= \omega_x(k,l)d(kl,(y,x)) u^y_{\delta_{(y,x)}(kl)} \text{ by Eq. \ref{Eq for d and omega}} 
        \\ &= \omega_x(k,l)\alpha_{(y,x)}(u^x_{kl}) = \alpha_{(y,x)}(u^x_k u^x_l)
    \end{align*}
    As usual, this map between generators is trace preserving and extends to a normal *-isomorphism $\alpha_{(y,x)}: A_x \rightarrow A_y$. It is a tedious routine check that $(\alpha_{(y,x)})_{(y,x) \in \cR}$ is a measurable field of isomorphisms and we omit the proof. One checks from the definition that $d(\delta_{(y,x)}(k),(z,y)) d(k, (y,x)) = d(k, (z,x))$ for $x,y,z \in X$ and $k \in G_x$. Now notice that: 
    \begin{align*}
        \alpha_{(z,y)} \circ \alpha_{(y,x)}(u^x_k) &= d(k,(y,x)) \alpha_{(z,y)}(u^y_{\delta_{(y,x)}(k)}) = d(k,(y,x))d(\delta_{(y,x)}(k), (z,y))u^{y}_{\delta_{(z,y)} \circ  \delta_{(y,x)}(k)}  \\ &= d(k,(z,x))u^{y}_{\delta_{(z,x)}(k)} = \alpha_{(z,x)}(u^x_k)
    \end{align*}
    Now let $\rho \in \rmZ^2(\cR,\T)$ be a 2-cocycle defined by $\rho((z,y),(y,x)) = \omega(j(z,y),j(y,x))$. It is easy to see the cocycle identity since $j$ is a homomorphic section. For each bisection $V \in [[\cR]]$, consider the bisection $j(V) \in [[G_0 \rtimes \cR]]$. Denote the corresponding partial isometry in $L_{\omega}(G_0 \rtimes \cR)$ by $\cU_V$. For each $V \in [[\cR]]$, the action $\alpha$ induces a normal *-isomorphism $\alpha_V : A 1_{s(V)} \rightarrow A 1_{t(V)}$ which is defined fiberwise by $(\alpha_V(a))_{y} = \alpha_{(y,x)}(a_x)$ for all $(y,x) \in V$. Similarly for $V,W \in [[\cR]]$, we define the unitary $\rho_{V,W}$ in $\cU(A 1_{t(VW)})$ by $\rho_{V,W}(t(r,s)) = \rho(r,s)$. We claim that in $L_{\omega}(G_0 \rtimes \cR)$, for all $V \in [[\cR]]$ we have: 
    \begin{align}
    \label{Eq: crossed product 1}
        \cU_V^* \cU_V = 1_{s(V)} &\text{  and  } \cU_V \cU_V^* = 1_{t(V)} \\
 \label{Eq: crossed product 2}        \cU_V a \cU_V^* = \alpha_V(a) & \text{  for all  }  a \in A 1_{s(V)}  \\ \label{Eq: crossed product 3}
        \cU_V \cU_W= \rho_{V,W} &\cU_{VW} \\
 \label{Eq: crossed product 4}        E_A(a \cU_V) = a 1_{V \cap X} & \text{  for all  } a \in A 1_{s(V)}
    \end{align}
    Notice that Equations \ref{Eq: crossed product 1}, \ref{Eq: crossed product 2} and \ref{Eq: crossed product 3} follow immediately from the pointwise calculations before. Moreover $j(V) \cap G_0 = V \cap X$ and hence $E(\cU_V) = 1_{V \cap X}$, thus verifying Equation \ref{Eq: crossed product 4} and the claim. As in \cite[Definitions 4.5 and 4.6]{Chak25} and the discussion below \cite[Definition 4.6]{Chak25}, these four equations are the defining properties of the cocycle crossed product $A \rtimes_{(\alpha,\rho)} \cR$. Now we claim that $L_\omega(G_0 \rtimes \cR)$ is generated by $A$ and the partial isometries $(\cU_V)_{V \in [[\cR]]}$. Suppose the claim is true, since the crossed product is the unique von Neumann algebra where the defining properties are satisfied, there is a trace preserving isomorphism $L_{\omega}(G_0 \rtimes \cR) \cong A \rtimes_{(\alpha,\rho)} \cR$ such that the expectations onto $A$ are intertwined by the isomorphism.
    
    To prove the claim, notice that it is enough to show that every bisection $B \in [[G_0 \rtimes \cR]]$ is generated by bisections in $[[G_0]]$ and $(j(V))_{V \in [[\cR]]}$. Let $q: [[G_0 \rtimes \cR]] \rightarrow [[\cR]]$ be the canonical quotient map given by $q(B) = \{(t(g),s(g)) \;  | \; g \in B\}$. Let $B_0 \in [[G_0 \rtimes \cR]]$ and consider the bisection $q(B_0) \in [[\cR]]$. Let $V_0 = j(q(B_0))$ and let $\kappa$ be the map on $B_0$ given by $b \mapsto b j(q(b))^{-1}$. One checks that that $\kappa$ is a Borel map on $B_0$ and $\kappa(B_0) \in [[G_0]]$. By construction we get $B_0 = \kappa(B_0) V_0$, and hence verifying the claim and thus $L_\omega(G_0 \rtimes \cR) \cong A \rtimes_{(\alpha,\rho)} \cR$.

    Finally, since $\cR$ is treeable, $\rmH^2(\cR,\T) = \{0\}$ by \cite[Corollary 2.8]{Kida_treeability} and hence there is a measurable function $b: \cR \rightarrow \T$ such that $\rho(r,s) = b(r)b(s)\overline{b(rs)}$ for a.e. $r,s \in \cR$. Consider the Borel function $a: G_0 \rtimes \cR \rightarrow \T$ given by $a(g) = \overline{b(q(g))}$. Now let us define the cocycle $\omega'$ on the groupoid $G_0 \rtimes \cR$ by $\omega'(g,h) = a(g)a(h)\overline{a(gh)} \omega(g,h)$. By definition, $\omega$ and $\omega'$ are cohomologous, and hence $L_{\omega}(G_0 \rtimes \cR) \cong L_{\omega'}(G_0 \rtimes \cR)$. Let $\rho' = \omega' \circ j$ and notice that for $r,s \in \cR$: 
    \begin{align*}
        \rho'(r,s) = \omega'(j(r).j(s)) = \overline{b(r)b(s)}b(rs) \omega(j(r),j(s)) = \overline{b(r)b(s)}b(rs)\rho(r,s) = 1
    \end{align*}
    Notice $\omega'$ and $\omega$ coincide on the isotropy subgroupoid $G_0$. Moreover, one checks that the cocycle $d$ and the action $\alpha$ are equal for $\omega$ and $\omega'$. Therefore, from the first part of the proof, we get: 
    \begin{align*}
       A \rtimes_{\alpha} \cR  \cong L_{\omega'}(G_0 \rtimes \cR) \cong L_{\omega}(G_0 \rtimes \cR) \cong A \rtimes_{(\alpha,\rho)} \cR
    \end{align*}
    thus completing the proof. 
\end{proof}

\begin{lem}
\label{Lemma: free products and actions}
    Let $\cR = \cR_1 \ast \cR_2$ be equivalence relations on $(X,\mu)$ and let $(A_x)_{x \in X}$ be a measured field of finite hyperfinite von Neumann algebras. Let $A$ be the direct integral, and let $\alpha: \cR \actson A$ be an action; then: 
    \begin{align*}
        A \rtimes_{\alpha} \cR \cong (A \rtimes_\alpha \cR_1)  \ast_A (A \rtimes_\alpha \cR_2)
    \end{align*}
\end{lem}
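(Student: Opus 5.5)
The plan is to use the universal property of amalgamated free products together with a freeness check on the natural generating set. Write $M = A \rtimes_\alpha \cR$ and $M_i = A \rtimes_\alpha \cR_i$ for $i=1,2$. Since $\cR_i \subseteq \cR$, restricting the action gives $\alpha|_{\cR_i}$, and the canonical partial isometries $(\cU_V)_{V \in [[\cR_i]]} \subset M$ together with $A$ satisfy the defining relations (\ref{Eq: crossed product 1})--(\ref{Eq: crossed product 4}) for the untwisted crossed product by $\cR_i$. By the uniqueness of the crossed product used in Lemma \ref{Lemma: twisted transformation groupoid vna}, we get trace-preserving embeddings $M_i \hookrightarrow M$. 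These embeddings restrict to the identity on $A$ and intertwine the conditional expectations $E_A$.

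Next we show that $M_1$ and $M_2$ generate $M$. It suffices to check that for each $V \in [[\cR]]$ the partial isometry $\cU_V$ lies in the von Neumann algebra generated by $M_1$ and $M_2$. Because $\cR = \cR_1 \ast \cR_2$ is generated by $\cR_1$ and $\cR_2$, every $V$ is, up to null sets, a countable disjoint union of pieces $V_n$. Each piece is a product $W_1 W_2 \cdots W_k$ of bisections alternately in $[[\cR_1]]$ and $[[\cR_2]]$. Since the action is untwisted, $\cU_{W_1}\cdots \cU_{W_k} = \cU_{V_n}$, and $\cU_V = \sum_n \cU_{V_n}$ converges strongly. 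So $M = M_1 \vee M_2$.

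The main step is freeness with amalgamation. Take $a_1 \cdots a_m$ alternating, with each $a_j \in M_{i_j} \ominus A$. We must show $E_A(a_1\cdots a_m) = 0$. By normality and $L^2$-density of the linear span of elements $x\cU_W$ with $x \in A$, it is enough to treat $a_j = x_j \cU_{W_j}$. Here $W_j \in [[\cR_{i_j}]]$ is chosen disjoint from the diagonal $X$, which is possible because $E_A(x\cU_W) = x1_{W \cap X}$ by (\ref{Eq: crossed product 4}); so subtracting $E_A$ amounts to removing $W \cap X$. Using (\ref{Eq: crossed product 2}) we move all coefficients to the left:
\begin{align*}
a_1\cdots a_m = x_1 \alpha_{W_1}(x_2)\alpha_{W_1W_2}(x_3)\cdots \, \cU_{W_1 W_2 \cdots W_m}.
\end{align*}
Then $E_A(a_1\cdots a_m) = y\, 1_{W_1\cdots W_m \cap X}$ for some $y \in A$. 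By the definition of the free product $\cR_1 \ast \cR_2$, an alternating product of non-unit arrows is never a unit, so $W_1\cdots W_m \cap X$ is null and the expectation vanishes.

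The delicate point is the density reduction. Elements of $M_i \ominus A$ are $L^2$-limits of finite sums $\sum_k x_k \cU_{W_k}$ with $W_k \cap X = \emptyset$, and we need $E_A$ of alternating products to pass to these limits. I would handle this with the standard trick of working with $\|\cdot\|_2$-approximations under a uniform operator-norm bound, via Kaplansky density inside each $M_i$, together with separate continuity of multiplication on bounded sets. The hyperfiniteness of the $A_x$ is not needed for this identification; it is presumably recorded for the later application of \cite{Brown_Dykema_Jung}. Combining generation with freeness, the universal property of the amalgamated free product over $A$, with respect to the trace-preserving expectations, yields $M \cong M_1 \ast_A M_2$.
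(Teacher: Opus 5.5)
Your proposal is correct and follows the paper's proof step for step. You get generation from alternating products of bisections of $\cR_1$ and $\cR_2$. You get freeness by reducing to monomials $x\cU_W$ with $W$ off the diagonal, so that $E_A$ of an alternating product is $y1_{W_1\cdots W_m\cap X}=0$, and then pass to general elements of $M_i\ominus A$ via Kaplansky density, uniformly bounded $\|\cdot\|_2$-approximations and a telescoping estimate. Your remark that hyperfiniteness of the $A_x$ plays no role in this lemma is also accurate.
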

\begin{proof}
    Let $M = A \rtimes_{\alpha} \cR$, and $M_i = A \rtimes_{\alpha} \cR_i$ for $i \in \{1,2\}$. Let $E: M \rightarrow A$ be the unique trace preserving conditional expectation, and let $E_1$ and $E_2$ denote the restrictions to $M_1$ and $M_2$. It is enough to prove that $M_1$ and $M_2$ generate $M$ and are freely independent over $A$. Let $\cB_1 = \{\phi_0 = X, \phi_1, \phi_2....\}$ and $\cB_2 = \{\psi_0 = X, \psi_1,\psi_2,...\}$ be countable bases for the equivalence relations $\cR_1$ and $\cR_2$, respectively. Let $\cW$ be the set of finite alternating words from $\cB_1$ and $\cB_2$. Each element $\rho \in \cW$ is a Borel bisection; hence, $\rho \in [[\cR]]$. Moreover, after possibly discarding a measure zero set, every element $(x,y) \in \cR$ can be written as $(x,x_1)(x_1,x_2),...,(x_n,y)$ for elements alternating in $\cR_1$ and $\cR_2$. Since $\cW$ is countable, and $\cB_i$ are bases for $\cR_i$, once again after possibly removing a null set, we get that $\cR = \bigcup_{\rho \in \cW} \rho$. Therefore, $A$ and the partial isometries $\{u_\rho \;|\; \rho \in \cW\}$ generate $M$. Since each $u_\rho$ is a product of partial isometries occurring in $M_1$ and $M_2$, we conclude that $u_\rho \in W^*(M_1,M_2)$. This immediately gives that $W^*(M_1,M_2) = M$.   

    Now we proceed to prove free independence. For $i \in \{1,2\}$, let $D_i < M_i$ be the *-algebra generated by $A$ and the partial isometries corresponding to $\cB_i$. Since $E_{i}(\phi) = \phi \cap X$ for a bisection $\phi \in [[\cR_i]]$, we have that: 
    \begin{align}
    \label{Eq for span}
        D_i \bigcap \ker(E_i) \subset \Span\{ au_\phi \; | \; a \in A, \phi \in [[\cR_i]], \phi \cap X = \varphi \}
    \end{align}
    Now for $1 \leq j \leq n$, let $i_j \in \{1,2\}$ such that $i_j \neq i_{j+1}$ for all $1 \leq j \leq n-1$. Let $x_j \in D_{i_j} \cap \ker(E_{i_j})$ and consider the word $x_1x_2...x_n$. We claim that $E(x_1x_2....x_n) = 0$. Notice that by Equation \ref{Eq for span} and multilinearity, it is enough to show this when $x_j = a_ju_{\phi_j}$ where $a_j \in A$, $\phi_j \in [[\cR_{i_j}]]$ and $\phi_j \cap X = \varphi$. By Equation \ref{Eq: crossed product 2} we get that $x_1x_2...x_n = a(u_{\phi_1}u_{\phi_2}...u_{\phi_n}) = au_{\phi_1\phi_2...\phi_n}$ for some element $a \in A$. By the free independence of $\cR_1$ and $\cR_2$, we obtain that $\mu(\phi_1\phi_2...\phi_n \bigcap X) = 0$. Therefore, by Equation \ref{Eq: crossed product 4}, we get that: 
    \begin{align*}
        E(au_{\phi_1\phi_2...\phi_n}) = a1_{\phi_1\phi_2...\phi_n \bigcap X} = a \cdot 0 = 0
    \end{align*}
    Now consider a word $x_1x_2...x_n$ where $x_j \in M_{i_j} \cap \ker(E_{i_j})$ for each $j$. Since $D_{i_j}$ is strongly dense in $M_{i_j}$, we get a uniformly bounded sequence of elements $(z_j^m)_{m \in \N}$ by Kaplansky density theorem such that $z_j^m \rightarrow x_j$ *-strongly. Therefore the convergence happens in particular in the $L^2$-norm. Now let $y^m_j = z^m_j - E_{i_j}(z^m_j)$ for each $m$ and $j$ and notice that this still gives a uniformly bounded sequence. Moreover $E_{i_j}(y^m_j) = 0$. By the previous paragraph, we get that $E(y^m_1 y^m_2...y^m_n) = 0$. By uniform boundedness and the telescopic identity we get that $y_1^my_2^m...y_n^m \rightarrow x_1x_2...x_n$ in $L^2$-norm. Since the conditional expectation $E$ is $L^2$-contractive, this givees that $E(x_1x_2...x_n) = 0$. Thus $M_1$ and $M_2$ are freely independent over $A$ as required. 
\end{proof}

\begin{prop}
\label{prop: actions of eq relations CE}
    Let $(A_x)_{x \in X}$ be a measured field of finite hyperfinite von Neumann algebras over a standard probability space $(X,\mu)$, and let $A$ be the direct integral. Let $\cR$ be a treeable p.m.p. equivalence relation on $(X,\mu)$, let $\delta: \cR \actson A$ be an action, and let $M = A \rtimes_{\delta} \cR$ be the crossed product. Then $M$ is Connes-embeddable. 
\end{prop}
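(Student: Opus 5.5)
Fix a treeing $\Phi = \{\phi_n\}_{n \geq 1}$ of $\cR$. For each $n$, let $\cR_n \subseteq \cR$ be the equivalence relation generated by the single bisection $\phi_n$. Because $\Phi$ is a treeing, no nontrivial reduced word in $\Phi \cup \Phi^{-1}$ has a non-null set of fixed points. This should give the free product decomposition
\begin{align*}
\cR = \ast_{n} \cR_n .
\end{align*}
Each $\cR_n$ is generated by a single partial automorphism, so it is hyperfinite; this is the standard fact that a one-bisection graphing generates a hyperfinite relation. We then proceed in three steps.

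\textbf{Step 1: each piece is Connes-embeddable.} Put $M_n = A \rtimes_\delta \cR_n$. Since $A$ is a direct integral of hyperfinite algebras, $A$ is hyperfinite. The crossed product of a hyperfinite algebra by a hyperfinite relation should again be hyperfinite. To see this, write $\cR_n = \bigcup_k \cR_{n,k}$ as an increasing union of finite relations. Then $A \rtimes \cR_{n,k}$ is a finite-orbit crossed product, which looks locally like $M_m(A_x)$ fields and is therefore hyperfinite. The union of these is weakly dense in $M_n$, so $M_n$ is hyperfinite, hence embeds in $R$, and in particular is Connes-embeddable.

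\textbf{Step 2: assemble the free product.} Lemma \ref{Lemma: free products and actions} is stated for two factors, and the same argument works verbatim for countably many. For finite $N$ it gives
\begin{align*}
A \rtimes_\delta (\ast_{n \le N} \cR_n) \cong \ast_{A,\, n \le N} M_n,
\end{align*}
either by the obvious multi-factor version or by induction, since $\ast_{n\le N}\cR_n = (\ast_{n<N}\cR_n) \ast \cR_N$. Now apply \cite{Brown_Dykema_Jung}: an amalgamated free product of Connes-embeddable tracial von Neumann algebras over an amenable (hyperfinite) subalgebra is Connes-embeddable. By induction, each $P_N = A \rtimes (\ast_{n\le N}\cR_n)$ is Connes-embeddable.

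\textbf{Step 3: pass to the limit.} The algebras $P_N$ form an increasing sequence of trace-compatible subalgebras of $M$. Their union contains $A$ and the partial isometries $u_{\phi_n}$ for every $n$, so it generates $M$ as a von Neumann algebra. An increasing union of Connes-embeddable tracial algebras with weakly dense union is Connes-embeddable. The usual argument is that Connes-embeddability can be checked on finite sets of generators via matricial microstates, and $2$-norm approximation then passes it to the closure. This gives the conclusion for $M$.

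The main obstacle is the bookkeeping in Step 2. First, one must justify the free decomposition $\cR = \ast_n \cR_n$ from the treeing condition. Second, one must check that Lemma \ref{Lemma: free products and actions} applies with the restricted actions $\delta|_{\cR_n}$, in particular that the trace-preserving conditional expectations onto $A$ are compatible. Third, and most delicate, one must verify that \cite{Brown_Dykema_Jung} indeed allows an amalgam that is hyperfinite but possibly non-factorial, here the diffuse-or-atomic algebra $A$. I would first check the precise hypotheses of that theorem. If the amalgam must be finite dimensional or a factor, I would fall back on the graphs-of-algebras result of \cite{Fima_Amaury}, which allows amenable edge algebras.
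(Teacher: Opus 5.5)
Your proposal is correct and follows essentially the same route as the paper. Both arguments split the treeing into singly generated, hence hyperfinite, subrelations, whose crossed products with $A$ are injective. Lemma \ref{Lemma: free products and actions} then writes each finite stage as an amalgamated free product over $A$, \cite[Corollary 4.5]{Brown_Dykema_Jung} gives Connes-embeddability of the finite stages, and one passes to the increasing union.

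The differences are cosmetic. The paper reduces to the finite stages first, cites \cite[Proposition 2.4]{AlvarezGaboriau} for the free decomposition and \cite[Proposition 4.12]{Chak25} for injectivity of the pieces, and applies \cite[Corollary 4.5]{Brown_Dykema_Jung} directly with the generally non-factorial hyperfinite amalgam $A$. You sketch the first two facts yourself, and your fallback to \cite{Fima_Amaury} is not needed.
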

\begin{proof}
    Let $\{\phi_n \; | \; n \in \N\}$ be a countable treeing of $\cR$ and let $\cR_k$ be the subequivalence relation of $\cR$ generated by $\phi_1,\phi_2,...,\phi_k$. Let $M_k = A \rtimes_{\delta} \cR_k$ and then $M$ is the WOT-closure of the union $\bigcup_{k} M_k$. By \cite[Proposition 4.1(iii)]{OzawaSurvey}, it is enough to prove that each $M_k$ is Connes-embeddable. Let $\cS_i < \cR$ be the equivalence relation generated by the bisection $\phi_i$, and let $B_i = A \rtimes \S_i$. Since $\cS_i$ is amenable and $A$ is injective, the crossed product $B_i$ is injective, following exactly the same proof as in \cite[Proposition 4.12]{Chak25}. Since $\{\phi_n \; | \; n \in \N\}$ is a treeing, by \cite[Proposition 2.4]{AlvarezGaboriau}, $\cR_k = \cS_1 \ast \cS_2 \ast ... \ast \cS_k$ for each $k$. Then by Lemma \ref{Lemma: free products and actions} we get that $M_k = B_1 \ast_A B_2 \ast_A ... \ast_A B_k$. The result now follows from \cite[Corollary 4.5]{Brown_Dykema_Jung} as $A$ is hyperfinite. 
\end{proof}

\begin{thm}
\label{Thm: action with treeable eq relation and amenable stabilizer}
    Let $G \actson (X,\mu)$ be a p.m.p. action and let $\cR$ be the orbit equivalence relation. Suppose that $\cR$ is treeable and that the stabilizers $G_x$ are amenable for a.e. $x \in X$. Then $G$ is cocycle-hyperlinear. 
\end{thm}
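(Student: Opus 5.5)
Fix a normalized cocycle $\omega \in \rmZ^2(G,\T)$. The plan is to embed $L_\omega(G)$ into a twisted groupoid algebra that the earlier results already control. First pull $\omega$ back to the transformation groupoid $X \rtimes G$ by setting $\widetilde{\omega}((g,hx),(h,x)) = \omega(g,h)$. This is a normalized Borel $2$-cocycle on $X \rtimes G$, and no freeness is needed because the arrows of $X \rtimes G$ are the pairs $(g,x)$ themselves. Then $L_{\widetilde{\omega}}(X \rtimes G)$ is the twisted crossed product $L^\infty(X,\mu) \rtimes_{(\alpha,\omega)} G$. As remarked at the start of this section, the unitaries $u_g$ (the bisections $\{g\}\times X$) generate a trace-preserving copy of $L_\omega(G)$ inside it, since $\tau(u_g) = \mu(\{x : gx = x\})\cdot$ ... is not $\delta_{g,e}$ in general. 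This point needs care. Instead I will use that $L^\infty(X) \rtimes_{(\alpha,\omega)} G$ has the canonical trace $\tau(f u_g) = \delta_{g,e}\int f\,d\mu$, in which the $u_g$ have $\tau(u_g)=\delta_{g,e}$. Hence the embedding $L_\omega(G) \hookrightarrow L^\infty(X) \rtimes_{(\alpha,\omega)} G$ is trace-preserving. Connes-embeddability of the crossed product therefore suffices.

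When the action is not essentially free, the crossed product by $G$ differs from the groupoid algebra $L_{\widetilde{\omega}}(X\rtimes G)$. The bridge is the conditional expectation onto the isotropy: $L^\infty(X)\rtimes_{(\alpha,\omega)} G$ is a direct integral over a.e. $x$ in the sense of a field whose fibres are twisted algebras of stabilizers. The simpler route is to note that $L_\omega(G)$ sits trace-preservingly in the groupoid algebra $L_{\widetilde\omega}(X\rtimes G)$ with its natural trace, whenever the trace of the bisection $\{g\}\times X$ is the measure of its intersection with the unit space. To avoid this subtlety I would first replace the action by the diagonal action on $X \times Y$, where $G \actson Y$ is a free p.m.p. action (e.g. a Bernoulli shift). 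The diagonal action is essentially free. Its orbit relation $\cR'$ is then a p.m.p. relation whose stabilizers are trivial, and the relevant point is to preserve treeability, which is not automatic.

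The cleaner plan, which I commit to, is as follows. Work directly with $M = L_{\widetilde{\omega}}(X\rtimes G)$ and apply Lemma \ref{Lemma: twisted transformation groupoid vna}. Since $\cR$ is treeable, it gives
\begin{align*}
M \cong \left(\int^\oplus_X L_{\widetilde\omega_x}(G_x)\,d\mu(x)\right)\rtimes_\alpha \cR .
\end{align*}
Each $G_x$ is amenable, so each fibre $L_{\widetilde\omega_x}(G_x)$ is hyperfinite; the twisted group algebra of an amenable group is injective, being a corner of the crossed product by a central extension. Proposition \ref{prop: actions of eq relations CE} then shows $M$ is Connes-embeddable. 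Finally $L_\omega(G)$ embeds into $M$ via $u_g \mapsto$ the unitary of the bisection $\{g\}\times X$, and this map respects the cocycle because $\widetilde\omega$ restricts to $\omega$ on these bisections.

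The main obstacle is the trace. In the groupoid algebra, $\tau(u_{\{g\}\times X}) = \mu(\mathrm{Fix}(g))$, which need not vanish for $g\neq e$, so $L_\omega(G)\to M$ is only a $*$-homomorphism and not trace-preserving. To fix this I would pass to $X\times Y$ with a Bernoulli action $G\actson Y$ and show that the enlarged relation keeps the hypotheses: stabilizers stay amenable (they become trivial), and treeability needs checking. I expect to realize $X\times Y$ as a bundle over $\cR$ via the homomorphic section $j$. The groupoid $(X\times Y)\rtimes G$ then becomes an amenable-isotropy extension of a treeable relation. Alternatively, I would keep the stabilizers $G_x$ and note that the unitaries $u_g$ generate a trace-preserving copy of $L_\omega(G)$ inside $\big(\int^\oplus L_{\omega}(G_x)\,\bar\otimes\, L^\infty(Y)\big)\rtimes\cR$, and then apply Proposition \ref{prop: actions of eq relations CE} to that algebra.
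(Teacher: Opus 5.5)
\textbf{The core argument is correct and is the paper's proof.} You pull $\omega$ back to $X\rtimes G$, embed $L_\omega(G)$ via the bisections $\{g\}\times X$, decompose with Lemma \ref{Lemma: twisted transformation groupoid vna}, and conclude with Proposition \ref{prop: actions of eq relations CE}. This is complete as it stands.

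\textbf{The gap is that you then reject this argument.} You assert that $\tau(u_{\{g\}\times X})=\mu(\mathrm{Fix}(g))$ and conclude that the embedding is not trace-preserving. In your second paragraph you also claim that $L^\infty(X)\rtimes_{(\alpha,\omega)}G$ differs from $L_{\widetilde\omega}(X\rtimes G)$ when the action is not free. Both claims are false; they conflate the transformation groupoid with the orbit relation $\cR$.
\begin{itemize}
\item The arrows of $X\rtimes G$ are the pairs $(g,x)$, and the unit space is $\{(e,x)\}$.
\item So $\{g\}\times X$ meets the unit space only when $g=e$. A fixed point $gx=x$ produces an isotropy arrow in $G_x$, not a unit.
\item Hence $\tau(u_g)=\delta_{g,e}$, and $L_{\widetilde\omega}(X\rtimes G)=L^\infty(X)\rtimes_{(\alpha,\omega)}G$ with no freeness assumption. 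It is $L(\cR)$, not $L(X\rtimes G)$, that differs from the crossed product for non-free actions.
\item The same shows up in the decomposition. $E_A(u_g)$ is the field $x\mapsto 1_{\mathrm{Fix}(g)}(x)\,u^x_g$, which is nonzero. But its trace is $\int_{\mathrm{Fix}(g)}\tau_x(u^x_g)\,d\mu(x)=0$ for $g\neq e$, since $u^x_g$ is a nontrivial group unitary in $L_{\omega_x}(G_x)$.
\end{itemize}

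\textbf{The fix you propose cannot work.} The diagonal action $G\actson X\times Y$ with $Y$ Bernoulli is essentially free. If its orbit relation were treeable, $G$ would be treeable, and the theorem would reduce to Proposition \ref{Prop: treeable groups are SH}. This fails exactly where the theorem has content.
\begin{itemize}
\item Take $G=\mathbb F_2\times\Z$ acting on a free $\mathbb F_2$-space through the projection.
\item The hypotheses hold: the orbit relation is treeable and the stabilizers are $\Z$.
\item But $G$ is not treeable: every free action has cost $1$ and $G$ is non-amenable. So the enlarged relation on $X\times Y$ is never treeable.
\end{itemize}

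Your alternative algebra is also misstated. $G_x$ acts nontrivially on $Y$, so the fibres would have to be twisted crossed products $L^\infty(Y)\rtimes_{\omega_x}G_x$, not tensor products $L_\omega(G_x)\bar\otimes L^\infty(Y)$. In any case it is an unnecessary detour.

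Delete the last paragraph and the contrary remarks in the first two, and replace them with the trace computation above; the argument is then exactly the paper's.
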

\begin{proof}
    Let $\omega \in \rmZ^2(G,\T)$ and notice that since $L_{\omega}(G)$ embeds into the twisted crossed product $M = L^{\infty}(X) \rtimes_{\omega} G$, it is enough to show that the latter is Connes-embeddable. Since $\cR$ is treeable, by Lemma \ref{Lemma: twisted transformation groupoid vna}, we have that $M$ is isomorphic to $A \rtimes_\delta \cR$ where $A = \int^{\oplus}_X A_xd\mu(x)$ and $A_x = L_{\omega_x}(G_x)$. Since a.e. $G_x$ is amenable, $L_{\omega_x}(G_x)$ is hyperfinite for a.e. $x \in X$ and consequently, $A$ is hyperfinite. The result now follows from Proposition \ref{prop: actions of eq relations CE} 
\end{proof}

\section{Linear groups}
\label{Sec: Special Linear groups}

Recall that $\SL(2,\Z)$ is treeable and hence cocycle-hyperlinear. The goal of this section is to prove that $\SL(n,Z)$ is cocycle-hyperlinear for all $n \geq 3$. For a countable discrete group $G$, we note that Moore's measurable cohomology is actually the same as group cohomology. By the main result in \cite{VanderKallen}, we get that $\rmH_2(\SL(3,\Z), \Z) = \rmH_2(\SL(4,\Z), \Z) = (\Z/2\Z)^2$. It is also known (for example, see \cite[Remark after Theorem 5.10 and Theoerm 10.1]{MilnorBook}) that $\rmH_2(\SL(n,\Z)) = \Z/2\Z$ for $n \geq 5$. Therefore, $\rmH^2(\SL(n,Z), \T) = (\Z/2\Z)^2$ for $n = 3,4$ and $\rmH^2(\SL(n,Z),\T) = \Z/2\Z$ for $n \geq 5$. In fact, van der Kallen's work \cite{VanderKallen} almost immediately yields the following proposition. 

\begin{prop}
    \label{Prop: mod-4 Schur multipliers}
    Let $n \geq 3$. Then the reduction $r_4: \SL(n,\Z) \rightarrow \SL(n,\Z/4\Z)$ induces an isomorphism: 
    \begin{align*}
        (r_4)_* : \rmH_2(\SL(n,\Z),\Z) \rightarrow \rmH_2(\SL(n, \Z/4\Z), \Z)
    \end{align*}    
\end{prop}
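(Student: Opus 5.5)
Since both groups are perfect for $n \geq 3$ (they equal $E_n(\Z)$ and $E_n(\Z/4\Z)$), they have universal central extensions, and we will identify $\rmH_2$ with the kernel of that extension and compare these kernels via $r_4$. By naturality, a surjection $r_4: E_n(\Z) \to E_n(\Z/4\Z)$ of perfect groups induces a unique map of universal central extensions, and its restriction to the kernels is $(r_4)_*$ on $\rmH_2$.

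\textbf{Case $n \geq 5$.} By Proposition \ref{Prop: random facts from Weibel's book}(1), $\St_n(A) \to E_n(A)$ is the universal central extension for $A = \Z$ and $A = \Z/4\Z$, so $\rmH_2(\SL(n,A),\Z) \cong \rmK_2(n,A)$. The ring map $\Z \to \Z/4\Z$ induces $\St_n(\Z) \to \St_n(\Z/4\Z)$, $x_{ij}(a) \mapsto x_{ij}(\overline a)$, lying over $r_4$; by uniqueness this is the universal map, so $(r_4)_*$ is the induced map $\rmK_2(n,\Z) \to \rmK_2(n,\Z/4\Z)$. Part (2) of Proposition \ref{Prop: random facts from Weibel's book}, with $\dim\Z = 1$ and $\dim\Z/4\Z = 0$ and hence $n \geq 5 \geq d+3$, identifies these groups with $\rmK_2(\Z)$ and $\rmK_2(\Z/4\Z)$, compatibly with the ring map. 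By part (3), both are $\Z/2\Z$, generated by $z_\Z = (x_{12}(1)x_{21}(-1)x_{12}(1))^4$ and $z_{\Z/4\Z}$ respectively. The ring map visibly sends $z_\Z$ to $z_{\Z/4\Z}$, so it is a surjection $\Z/2 \to \Z/2$ and therefore an isomorphism.

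\textbf{Case $n = 3,4$.} This is the main obstacle, because $\St_n$ is no longer the universal central extension and $\rmH_2 = (\Z/2)^2$ rather than $\rmK_2$. Here we will use van der Kallen \cite{VanderKallen}: he computes $\rmH_2(\SL(n,\Z),\Z) \cong (\Z/2)^2$ for $n=3,4$, with the universal central extension given by $\St_n(\Z)$ modulo relations, equivalently as an extension of $\St_n$ by an extra $\Z/2$ that detects the failure of the Steinberg relations at $n \leq 4$. His computation also treats $\SL(n,\Z/4\Z)$, or more generally the rings where the extra symbol survives, and gives $\rmH_2(\SL(n,\Z/4\Z),\Z) \cong (\Z/2)^2$. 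The plan is as follows.

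\begin{enumerate}
\item Exhibit the two generators in the integral case: the Steinberg symbol $z_\Z$, and van der Kallen's extra generator, an explicit word in lifted elementary matrices, whose obstruction comes from the $n \le 4$ exceptional relation involving $2$.
\item Check that their images under the lift of $r_4$ are the corresponding generators over $\Z/4\Z$. The word "$4$" enters precisely because mod $2$ these classes can collapse, while mod $4$ the relevant symbol $\{-1,-1\}$ and the extra class stay nontrivial.
\item Conclude that $(r_4)_*$ is a surjection between groups of order $4$, hence an isomorphism.
\end{enumerate}

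If the $\Z/4\Z$ computation is not directly available in \cite{VanderKallen}, we would instead show injectivity directly. The plan is to build, for each nonzero character of $\rmH_2(\SL(n,\Z))$, a central $\Z/2$-extension of $\SL(n,\Z)$ pulled back from a finite extension of $\SL(n,\Z/4\Z)$, using the congruence subgroup property to factor the extension through a congruence quotient and checking that level $4$ suffices. Surjectivity would then follow by comparing orders.
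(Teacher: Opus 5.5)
\textbf{Case $n\ge 5$.} Your argument is the paper's, and you are slightly more careful: you identify the lift of $r_4$ with the Steinberg map by uniqueness, and you use naturality of stabilization.

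\textbf{Case $n=3,4$.} Your outline follows the paper's route. You take van der Kallen's generators $\eta_n, z_n$ of $\rmH_2(\SL(n,\Z),\Z)\cong(\Z/2\Z)^2$, push them through the induced map $\widehat{r_4}$ of universal central extensions, and compare with $\rmH_2(\SL(n,\Z/4\Z),\Z)\cong(\Z/2\Z)^2$. However, your step 2 is the entire content of this case, and it is only asserted. Showing that each of $\widehat{r_4}(\eta_n)$ and $\widehat{r_4}(z_n)$ is nontrivial would not suffice in $(\Z/2\Z)^2$. You need $\widehat{r_4}(z_n)\notin\langle\widehat{r_4}(\eta_n)\rangle$. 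The phrase ``corresponding generators over $\Z/4\Z$'' presupposes an explicit identification that you have not produced.

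The missing mechanism, which the paper supplies, runs as follows.
\begin{enumerate}
\item From the proof of van der Kallen's corollary, $\overline\eta_n:=\widehat{r_4}(\eta_n)\neq 1$ and $\widehat{Q_n}/\langle\overline\eta_n\rangle\cong\St_n(\Z/4\Z)$ as central extensions of $Q_n=\SL(n,\Z/4\Z)$.
\item Passing to kernels, $\rmH_2(Q_n,\Z)/\langle\overline\eta_n\rangle\cong\rmK_2(n,\Z/4\Z)$.
\item Since $\Z/4\Z$ has Krull dimension $0$, stability already holds for $n\geq 3$. This gives $\rmK_2(n,\Z/4\Z)\cong\rmK_2(\Z/4\Z)\cong\Z/2\Z$, generated by the image of $\overline z_n=\widehat{r_4}(w_{12})^4$.
\item So $\overline z_n$ survives modulo $\overline\eta_n$, which is exactly the independence you need.
\end{enumerate}
A minor correction: the universal central extension surjects onto $\St_n(\Z)$ with kernel $\Z/2\Z$. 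It is not a quotient of $\St_n(\Z)$ ``modulo relations''.

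\textbf{Your fallback.} It does not close the gap either. The congruence subgroup property controls finite-index subgroups of $\SL(n,\Z)$, not residual finiteness of its central extensions. Deligne's non-residually-finite finite central extensions of $\mathrm{Sp}_{2n}(\Z)$, a group with the congruence subgroup property, show these are different. So the congruence subgroup property alone does not factor an extension through a congruence quotient. Moreover, ``checking that level $4$ suffices'' is precisely the statement to be proved.
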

\begin{proof}
    Let $G_n = \SL(n,\Z)$ and $Q_n = \SL(n,\Z/4\Z)$ for convenience of notation. First, assume that $n \geq 5$; we then apply Proposition \ref{Prop: random facts from Weibel's book} to $A = \Z$ and $A = \Z/4\Z$. Since their Krull dimensions are 1 and 0, and since $E_n(A) = \St_n(A)$, we get: 
    \begin{align*}
        \rmH_2(G_n,\Z) \cong \rmK_2(\Z) \cong \Z/2\Z \text{ and } \rmH_2(Q_n, \Z) \cong \rmK_2(\Z/4\Z) \cong \Z/2\Z
    \end{align*}
 Moreover, the reduction map carries the generator $z_{\Z}$ to $z_{\Z/4\Z}$ (recall the notation from Proposition \ref{Prop: random facts from Weibel's book}), so indeed $(r_4)_*$ is an isomorphism. 

    Now let $n = 3,4$. Since both $G_n$ and $Q_n$ are perfect groups, let $p_n: \widehat{G_n} \rightarrow G_n$ and $q_n: \widehat{Q_n} \rightarrow Q_n$ be their universal extensions. The reduction $r_4: G_n \rightarrow Q_n$ thus induces a homomorphism: 
    \begin{align*}
        \widehat{r_4}: \widehat{G_n} \rightarrow \widehat{Q_n}
    \end{align*}
    such that the restriction to the kernel precisely gives $(r_4)_* : \rmH_2(G_n,\Z) \rightarrow \rmH_2(Q_n,\Z)$. We will recall the construction of a canonical lift $x_{ij} \in \widehat{G_n}$ of $e_{ij}(1)$ as constructed in \cite{VanderKallen}. Indeed, let $k$ be the smallest integer in $\{1,2,...,n\}$ different from $i$ and $j$. Then let $a \in p^{-1}(\{e_{ik}(1)\})$ and let $b \in p^{-1}(\{e_{kj}(1)\})$ be arbitrary lifts and define $x_{ij} = [a,b]$. Then $x_{ij}$ does not depend on the choices $a$ and $b$ since any other lift would be of the form $ca$ and $db$ with $c,d \in \ker(p_n)$ and $[ca,db] = [a,b]$ since $\ker(p_n)$ is central. Moreover, $p_n(x_{ij}) = [e_{ij}(1),e_{ij}(1)] = e_{ij}(1)$, and hence it is indeed a lift. Now we define the following elements in $\widehat{G_n}$: 
    \begin{align*}
        w_{ij} = x_{ij}x_{ji}^{-1}x_{ij}  &\text{ and }  z_n = (w_{12})^4 \text{ for } n =3,4  \\  \eta_3 = w_{12}w_{21} &\text{ and } \eta_4 = [x_{12},x_{34}]
    \end{align*}
    By \cite[Theorem, Page 47]{VanderKallen}, $\rmH_2(G_n,\Z) = \langle \eta_n,z_n \rangle \cong (\Z/2\Z)^2$ where $\eta_n$ and $z_n$ are distinct elements of order 2. Now let $\overline{\eta}_n = \widehat{r_4}(\eta_n)$, $\overline{z}_n = \widehat{r_4}(z_n)$ and $\overline{w}_n = \widehat{r_4}(w_n)$. By \cite[Corollary, Page 48]{VanderKallen}, $\rmH_2(Q_n,\Z) = (\Z/2\Z)^2$ and hence it is enough to show that $\overline{\eta}_n$ and $\overline{z}_n$ are still non-zero and independent in $\widehat{Q_n}$. In fact it follows from the proof of \cite[Corollay, Page 48]{VanderKallen} that $\overline{\eta}_n \neq 1$ and $\widehat{Q_n}/ \langle \overline{\eta}_n \rangle = \St_n(\Z/4\Z)$ as extensions of $Q_n$. Passing to the kernels, one gets: 
    \begin{align*}
        \rmH_2(Q_n,\Z)/ \langle \overline{\eta}_n \rangle \cong \rmK_2(n, \Z/4\Z)
    \end{align*}
    Since $\Z/4\Z$ has Krull dimension 0, by Proposition \ref{Prop: random facts from Weibel's book}(2), we get that $\rmK_2(n, \Z/4\Z) \cong \rmK_2(\Z/4\Z)$. Moreover, by Proposition \ref{Prop: random facts from Weibel's book}(3), $\rmK_2(n,\Z/4\Z)$ is generated by the image of $\overline{z}_n = (\overline{w}_n)^4$. In particular, $\overline{z}_n$ has a non-trivial image in $\rmH_2(Q_n,\Z)/ \langle \overline{\eta}_n \rangle$. Therefore, $\overline{z}_n \notin \langle \overline{\eta}_n \rangle$, and $\langle \overline{\eta}_n, \overline{z}_n \rangle \cong (\Z/2\Z)^2$ as required.  
\end{proof}

Since $\Hom(\rmH_2(G,\Z),\T) \cong \rmH^2(G, \T)$ for a perfect group, consider the following group homomorphism:
\begin{align*}
    (r_4)^* : \rmH^2(\SL(n,\Z/4\Z),\T) \rightarrow \rmH^2(\SL(n,\Z), \T)
\end{align*} be 
given by mapping a homomorphism $\phi \in \Hom(\rmH_2(\SL(n,\Z),\Z),\T)$ to $\phi \circ (\r_4)_*^{-1}$. From Proposition \ref{Prop: mod-4 Schur multipliers}, it follows immediately that:

\begin{cor}
    \label{Cor: Same cohomologies for Z and Z/4Z}
    For $n \geq 3$, $(r_4)^* : \rmH^2(\SL(n,\Z/4\Z), \T) \rightarrow \rmH^2(\SL(n,\Z),\T)$ is an isomorphism. Thus, every 2-cocycle $\omega$ on $\SL(n,\Z)$ is cohomologous to a 2-cocycle of the form $\omega_0 \circ r_4$, where $\omega_0$ is a 2-cocycle on $\SL(n, \Z/4\Z)$. 
\end{cor}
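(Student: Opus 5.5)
The plan is to deduce the corollary from Proposition \ref{Prop: mod-4 Schur multipliers} by naturality of the universal coefficient isomorphism, and then unwind cohomology classes into cocycles. Write $G_n = \SL(n,\Z)$ and $Q_n = \SL(n,\Z/4\Z)$. Both groups are perfect for $n \geq 3$, since $G_n = E_n(\Z)$ and $Q_n = E_n(\Z/4\Z)$ and $E_n(A)$ is perfect for $n \geq 3$. Hence $\rmH_1$ vanishes for both, and the universal coefficient sequence gives natural isomorphisms
\begin{align*}
\rmH^2(G_n,\T) \cong \Hom(\rmH_2(G_n,\Z),\T), \qquad \rmH^2(Q_n,\T) \cong \Hom(\rmH_2(Q_n,\Z),\T).
\end{align*}

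The key step is naturality. The universal coefficient map $\rmH^2(G,\T) \to \Hom(\rmH_2(G,\Z),\T)$, given by evaluating a cocycle on cycles, is natural in $G$. So the inflation map $r_4^{\ast}\colon [\omega_0] \mapsto [\omega_0 \circ (r_4 \times r_4)]$ on the level of cocycles corresponds, under these identifications, to precomposition with $(r_4)_\ast$, i.e. $\phi \mapsto \phi \circ (r_4)_\ast$. I would check this compatibility explicitly: pulling back a cocycle along $r_4$ and then pairing with a cycle in $G_n$ equals pairing the original cocycle with the pushed-forward cycle. This is just functoriality of the bar complex. Since $(r_4)_\ast$ is an isomorphism by Proposition \ref{Prop: mod-4 Schur multipliers}, precomposition with it is a bijection of $\Hom$ groups. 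Hence inflation is an isomorphism, and the map $(r_4)^\ast$ defined above (via $(r_4)_\ast^{-1}$) is its inverse, so it is also an isomorphism.

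For the second sentence, take $\omega \in \rmZ^2(G_n,\T)$. By surjectivity of inflation there is $\omega_0 \in \rmZ^2(Q_n,\T)$ whose pullback $\omega_0\circ(r_4\times r_4)$ represents $[\omega]$. Thus $\omega$ and $\omega_0 \circ r_4$ differ by a coboundary. Since $G_n$ is discrete, Moore cohomology coincides with ordinary cohomology, so no measurability issues arise. If normalization is wanted, replace $\omega_0$ by a cohomologous normalized cocycle, which is standard.

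The main obstacle is the bookkeeping of directions. The map displayed in the statement is written abstractly via $\Hom$ groups, so the only genuine content is confirming that it agrees with cocycle pullback along $r_4$, which is the naturality of the universal coefficient theorem. Everything else follows formally from the proposition.
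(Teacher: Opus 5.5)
Your proposal is correct and follows the paper's route: identify $\rmH^2(\cdot,\T)$ with $\Hom(\rmH_2(\cdot,\Z),\T)$ via the universal coefficient theorem, then transport the isomorphism $(r_4)_*$ of Proposition \ref{Prop: mod-4 Schur multipliers}. Your explicit appeal to naturality (inflation along $r_4$ corresponds to $\phi \mapsto \phi\circ(r_4)_*$) is what the paper leaves implicit; it justifies the cocycle-level claim that $\omega$ is cohomologous to $\omega_0\circ r_4$, and it shows that the map the paper writes via $(r_4)_*^{-1}$ is really the inverse of inflation.
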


We need the following lemma to complete the proof: 

\begin{lem}
\label{Lemma: finite descent}
    Let $G$ be a countable discrete group, and let $q:G \rightarrow F$ be a finite quotient. Let $\omega_0 \in \rmH^2(F,\T)$ and let $\omega \in \rmH^2(G,\T)$ be defined by $\omega(g,h) = \omega_0(q(g),q(h))$. If $G$ is hyperlinear, then $L_{\omega}(G)$ is Connes-embeddable.  
\end{lem}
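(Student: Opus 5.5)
We plan to realize $L_\omega(G)$ as a corner-free subalgebra of $L(G)\bar\otimes M_k(\mathbb{C})$ for a suitable $k$, built from a finite-dimensional projective representation of $F$. The point is that $\omega$ factors through the finite group $F$, and a finite group with cocycle $\omega_0$ always admits a finite-dimensional $\omega_0$-projective representation that contains every irreducible one. We take the twisted left regular representation $\lambda^{\omega_0}$ of $F$ on $\ell^2(F)$, given by $\lambda^{\omega_0}_s\delta_t = \omega_0(s,t)\delta_{st}$. These are unitaries satisfying $\lambda^{\omega_0}_s\lambda^{\omega_0}_t = \omega_0(s,t)\lambda^{\omega_0}_{st}$, and their normalized trace on $M_{|F|}(\mathbb{C})$ is $\mathrm{tr}(\lambda^{\omega_0}_s) = \delta_{s,e}$. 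We will use these freely, since $\omega_0$ is normalized.

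In $L(G)\bar\otimes M_{|F|}(\mathbb{C})$ with trace $\tau\otimes\mathrm{tr}$, define $v_g = \lambda_g\otimes \lambda^{\omega_0}_{q(g)}$. Then
\begin{align*}
v_gv_h = \lambda_{gh}\otimes \omega_0(q(g),q(h))\lambda^{\omega_0}_{q(gh)} = \omega(g,h)v_{gh}.
\end{align*}
For the trace, $(\tau\otimes\mathrm{tr})(v_g) = \delta_{g,e}\,\mathrm{tr}(\lambda^{\omega_0}_{q(g)}) = \delta_{g,e}$; in fact the factor $\tau(\lambda_g)$ alone already forces this. So $g\mapsto v_g$ is an $\omega$-projective unitary representation whose trace agrees with the canonical trace of $L_\omega(G)$ on the generators. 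By the standard uniqueness of the GNS construction for the trace (the map $u_g\mapsto v_g$ is isometric on the twisted group algebra in $L^2$ and multiplicative), it extends to a trace-preserving normal embedding $L_\omega(G)\hookrightarrow L(G)\bar\otimes M_{|F|}(\mathbb{C})$.

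It remains to check that $L(G)\bar\otimes M_{|F|}(\mathbb{C})$ is Connes-embeddable. Since $G$ is hyperlinear, $L(G)\hookrightarrow R^\omega$ trace-preservingly. Then $L(G)\bar\otimes M_k(\mathbb{C})\hookrightarrow R^\omega\bar\otimes M_k(\mathbb{C})\cong (R\bar\otimes M_k(\mathbb{C}))^\omega\cong R^\omega$, using $R\bar\otimes M_k(\mathbb{C})\cong R$. Composing the two embeddings finishes the proof.

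No step here is a serious obstacle. The only point needing care is the extension of $u_g\mapsto v_g$ to a normal trace-preserving isomorphism onto its image, which is the usual argument that two traces agreeing on the generating twisted group algebra yield isomorphic GNS completions. Everything else is bookkeeping with the cocycle identity and the normalization $\omega_0(e,s)=\omega_0(s,e)=1$. Notably, finiteness of $F$ is used only to make the matrix algebra finite-dimensional.
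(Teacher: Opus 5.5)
Your proposal is correct and is essentially the paper's argument. The paper embeds $L_\omega(G)$ into $L_{\omega_0}(F)\otimes L(G)$ via $u_g\mapsto v_{q(g)}\otimes u_g$, and your twisted regular representation $\lambda^{\omega_0}$ on $\ell^2(F)$ simply realizes $L_{\omega_0}(F)$ trace-preservingly inside $M_{|F|}(\mathbb{C})$. Your explicit check that $L(G)\bar\otimes M_k(\mathbb{C})$ embeds into $R^\omega$ fills in the tensor-product step that the paper leaves implicit.
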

\begin{proof}
    We only give a sketch as the details are routine. Let $\{u_g\}_{g \in G}$ and $\{v_f\}_{f \in F}$ denote the canonical unitaries in $L(G)$ and $L_{\omega_0}(F)$ respectively. For $g \in G$ consider the unitaries $W_g \in L_{\omega_0}(F) \otimes L(G)$ given by $W_g = v_{q(g)} \otimes u_g$. Notice that: 
    \begin{align*}
        W_g W_h = v_{q(g)}v_{q(h)} \otimes u_g u_h = \omega_0(q(g),q(h)) v_{q(gh)} \otimes u_{gh} = \omega(g,h) W_{gh}
    \end{align*}
    Considering the usual product trace, $u_g \mapsto W_g$ gives a trace preserving embedding $L_\omega(G) \hookrightarrow L_{\omega_0}(F) \otimes L(G)$. Now $L_{\omega_0}(F)$ is finite dimensional and hence Connes-embeddable and $L(G)$ is Connes-embeddable by hypothesis, thus proving the lemma. 
\end{proof}

\begin{thm}
\label{Thm: SL(n,Z) is cocycle-hyperlinear}
    For $n \geq 2$, $\SL(n,\Z)$ is cocycle-hyperlinear.   
\end{thm}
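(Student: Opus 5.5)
The proof splits into the cases $n=2$ and $n\geq 3$.

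For $n=2$, $\SL(2,\Z)$ is treeable: it contains a finite-index free subgroup, and free groups are strongly treeable; alternatively, $\SL(2,\Z)$ is virtually free, with amenable (finite) kernel issues absorbed. Proposition \ref{Prop: treeable groups are SH} then gives cocycle-hyperlinearity directly. If one prefers to avoid the treeability of $\SL(2,\Z)$ itself, one can argue through Theorem \ref{Thm: action with treeable eq relation and amenable stabilizer}. Take a finite-index free subgroup $F\leq \SL(2,\Z)$ and let $\SL(2,\Z)$ act on $\SL(2,\Z)\times_F Y$, the induced action from a free treeable action of $F$ on $Y$. This action has finite (hence amenable) stabilizers, and its orbit equivalence relation is treeable, since it is a finite extension of a treeable relation. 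I will simply cite the standard fact that $\SL(2,\Z)$ is treeable, as the section's opening remark already does.

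For $n\geq 3$, fix a normalized cocycle $\omega\in \rmZ^2(\SL(n,\Z),\T)$. By Corollary \ref{Cor: Same cohomologies for Z and Z/4Z}, there is $\omega_0\in\rmZ^2(\SL(n,\Z/4\Z),\T)$ and a function $b:\SL(n,\Z)\to\T$ with
\begin{align*}
\omega(g,h)=b(g)b(h)\overline{b(gh)}\,\omega_0(r_4(g),r_4(h)).
\end{align*}
We may take $b(e)=1$ and normalize $\omega_0$. Cohomologous cocycles give isomorphic twisted algebras via $u_g\mapsto b(g)u_g$, which preserves the trace. Hence $L_\omega(\SL(n,\Z))\cong L_{\omega_0\circ r_4}(\SL(n,\Z))$.

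Next, $r_4$ is surjective onto the finite group $\SL(n,\Z/4\Z)$, as shown in the preliminaries. Moreover, $\SL(n,\Z)$ is residually finite, hence sofic, hence hyperlinear by Elek–Szabó. Lemma \ref{Lemma: finite descent} therefore applies with $q=r_4$ and shows that $L_{\omega_0\circ r_4}(\SL(n,\Z))$ is Connes-embeddable. Since $\omega$ was arbitrary, $\SL(n,\Z)$ is cocycle-hyperlinear.

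The real work is in Proposition \ref{Prop: mod-4 Schur multipliers}, which is already available. The only delicate points remaining are the following.

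\begin{itemize}
\item The isomorphism of Corollary \ref{Cor: Same cohomologies for Z and Z/4Z} must be the pullback along $r_4$ at the level of cocycles, not merely an abstract isomorphism. This holds by naturality of the universal coefficient isomorphism $\rmH^2(G,\T)\cong\Hom(\rmH_2(G,\Z),\T)$ for perfect groups, both $\SL(n,\Z)$ and $\SL(n,\Z/4\Z)$ being perfect.
\item The normalization bookkeeping for $b$ and $\omega_0$ must be checked.
\end{itemize}

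I expect the naturality check to be the main point to state carefully.
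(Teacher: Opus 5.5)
Your proposal is correct and follows the paper's proof. For $n=2$ you use the treeability of $\SL(2,\Z)$ together with Proposition~\ref{Prop: treeable groups are SH}; for $n\geq 3$ you pull every cocycle back from $\SL(n,\Z/4\Z)$ via Corollary~\ref{Cor: Same cohomologies for Z and Z/4Z} and then apply residual finiteness and Lemma~\ref{Lemma: finite descent}. Your insistence that the isomorphism in that corollary be the pullback along $r_4$, justified by naturality of the universal coefficient isomorphism, is a worthwhile precision the paper leaves implicit; that isomorphism holds for any group because $\T$ is divisible, so perfectness is not needed there.
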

\begin{proof}
    It is well-known that $\SL(2,\Z)$ is treeable and hence cocycle-hyperlinear by Proposition \ref{Prop: treeable groups are SH}. For $n \geq 3$, since $\SL(n,\Z)$ is residually finite and hence hyperlinear, the conclusion follows immediately from Corollary \ref{Cor: Same cohomologies for Z and Z/4Z} and Lemma \ref{Lemma: finite descent}. 
\end{proof}

We record the following corollary here, although
we need a couple of permanence properties from the next section. 

\begin{cor}
    \label{Cor: GLn and PSLn}
    For $n \geq 2$, all central extensions of $\SL(n,\Z)$, $\GL(n,\Z)$ and $\PSL(n,\Z)$ are cocycle-hyperlinear. 
\end{cor}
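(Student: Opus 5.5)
The plan is to combine Theorem \ref{Thm: SL(n,Z) is cocycle-hyperlinear} with the permanence properties listed in Theorem \ref{Main thm: permanence properties}, namely Propositions \ref{Prop: extensions of perfect groups}, \ref{Prop: quotient by central subgroup} and \ref{Prop: coamenable overgroups}. The argument splits according to $n = 2$ and $n \geq 3$, since perfectness of $\SL(n,\Z)$ is available only for $n \geq 3$.

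\textbf{Case $n \geq 3$.} First, $\SL(n,\Z)$ is perfect and cocycle-hyperlinear by Theorem \ref{Thm: SL(n,Z) is cocycle-hyperlinear}, so Proposition \ref{Prop: extensions of perfect groups} shows that every central extension of $\SL(n,\Z)$ is cocycle-hyperlinear.

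Second, $\PSL(n,\Z)$ is the quotient of $\SL(n,\Z)$ by its center $\{\pm I\}\cap \SL(n,\Z)$, which is finite and central. Proposition \ref{Prop: quotient by central subgroup} (or Proposition \ref{Prop: quotients by finite subgroups}) therefore makes $\PSL(n,\Z)$ cocycle-hyperlinear. It is also perfect, being a quotient of a perfect group, so Proposition \ref{Prop: extensions of perfect groups} applies again to all of its central extensions.

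Third, $\GL(n,\Z)$ is not perfect, so I would argue by co-amenability instead. Let $1 \to K \to E \xrightarrow{q} \GL(n,\Z) \to 1$ be central, and set $E_0 = q^{-1}(\SL(n,\Z))$. Then $E_0$ has index $2$ in $E$ and is a central extension of $\SL(n,\Z)$, hence cocycle-hyperlinear by the first step. Proposition \ref{Prop: coamenable overgroups} applied to the finite index subgroup $E_0 \leq E$ gives that $E$ is cocycle-hyperlinear. Taking $K$ trivial covers $\GL(n,\Z)$ itself.

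\textbf{Case $n = 2$.} Here $\SL(2,\Z)$ is not perfect, so Proposition \ref{Prop: extensions of perfect groups} is unavailable and I would instead use Theorem \ref{Main Thm: amenable-by-treeable}. Let $E$ be a central extension of $\SL(2,\Z)$ with central kernel $K$. Then $E$ has the normal amenable subgroup $q^{-1}(\{\pm I\})$, which is an extension of the finite group $\{\pm I\}$ by the abelian group $K$. The quotient is $\PSL(2,\Z) \cong \Z/2 \ast \Z/3$, which is treeable, since it contains a free subgroup of finite index and treeability passes to finite index overgroups (for instance through Gaboriau's results or induced actions).

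So $E$ is amenable-by-treeable and hence cocycle-hyperlinear by Theorem \ref{Main Thm: amenable-by-treeable}. The same argument handles central extensions of $\PSL(2,\Z)$, which are abelian-by-treeable, and central extensions of $\GL(2,\Z)$, where the amenable subgroup is the preimage of $\{\pm I\}$ and the quotient is $\PGL(2,\Z) \cong D_2 \ast_{\Z/2} D_3$, again virtually free and so treeable. Alternatively, $\GL(2,\Z)$ can be treated via the index-two argument above.

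The main obstacle I expect is the $n=2$ case, specifically checking that virtually free groups such as $\PSL(2,\Z)$ and $\PGL(2,\Z)$ are treeable in the sense of the definition, i.e.\ that they admit an essentially free treeable action. If a direct citation is not available, I would take the treeable free action of the finite index free subgroup, induce it up to the whole group, and appeal to Gaboriau's stability of treeability under finite index extensions of equivalence relations.
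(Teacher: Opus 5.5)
Your proof is correct, and it differs from the paper's in two places. In the second it handles a case the paper's argument does not actually cover.

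For $n\ge 3$, your treatment of $\SL(n,\Z)$ and $\GL(n,\Z)$ is the paper's. That is, Proposition~\ref{Prop: extensions of perfect groups} handles central extensions of $\SL(n,\Z)$, and the index-two preimage together with Proposition~\ref{Prop: coamenable overgroups} handles those of $\GL(n,\Z)$. For a central extension $G\xrightarrow{p}\PSL(n,\Z)$, the paper does not use perfectness of $\PSL(n,\Z)$. It forms the fibre product $\widetilde G=\{(h,g)\in\SL(n,\Z)\times G : q(h)=p(g)\}$, which is a central extension of $\SL(n,\Z)$. It then recovers $G$ as the quotient of $\widetilde G$ by the central subgroup $Z(\SL(n,\Z))\times\{e\}$, using Proposition~\ref{Prop: quotient by central subgroup}. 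Your observation that $\PSL(n,\Z)$ is itself perfect and cocycle-hyperlinear is a shorter route to the same conclusion.

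The real divergence is at $n=2$. The paper applies Proposition~\ref{Prop: extensions of perfect groups} to central extensions of $\SL(n,\Z)$ for every $n\ge 2$, and its $\GL$ and $\PSL$ arguments both reduce to that case. But $\SL(2,\Z)$ has abelianization $\Z/12\Z$, so it has no universal central extension and that proposition does not apply. For instance, the braid group $B_3$ is a non-split central extension of $\SL(2,\Z)$ by $\Z$. Your argument supplies exactly this missing case. You use Theorem~\ref{Main Thm: amenable-by-treeable}, and only the easy implication (2)$\Rightarrow$(1) plus Theorem~\ref{Thm: action with treeable eq relation and amenable stabilizer} are needed. The normal subgroup $q^{-1}(\{\pm I\})$ is amenable (indeed abelian), and the quotient $\PSL(2,\Z)$ is treeable.

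It would suffice to do this for central extensions of $\SL(2,\Z)$ alone. The paper's index-two and fibre-product reductions then take care of $\GL(2,\Z)$ and $\PSL(2,\Z)$, without any need for $\mathrm{PGL}(2,\Z)$.

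The obstacle you anticipate is also milder than you fear. Since $\PSL(2,\Z)\cong\Z/2\ast\Z/3$, for any essentially free p.m.p.\ action the orbit relation is the free product of the two finite orbit relations of the factors. A treeing can therefore be written down directly, with no need to induce from a finite-index free subgroup.
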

\begin{proof}
    Notice that $\GL(n,\Z)$ is a finite index overgroup of $\SL(n,\Z)$ and hence cocycle-hyperlinear by Theorem \ref{Thm: SL(n,Z) is cocycle-hyperlinear} and Proposition \ref{Prop: coamenable overgroups}. Similarly, $\PSL(n,\Z)$ is a quotient of $\SL(n,\Z)$ by $Z(\SL(n,\Z))$ and and hence cocycle-hyperlinear by Theorem \ref{Thm: SL(n,Z) is cocycle-hyperlinear} and Proposition \ref{Prop: quotient by central subgroup}. For central extensions of $\SL(n,\Z)$, it follows from \ref{Prop: extensions of perfect groups}. Now for a central extension of $\GL(n,\Z)$ given by:
    \begin{align*}
        1 \rightarrow Z \rightarrow G \xrightarrow{p} \GL(n,\Z) \rightarrow 1
    \end{align*}
    let $G_0 = p^{-1}(\SL(n,\Z))$. Then: 
    \begin{align*}
        1 \rightarrow Z \rightarrow G_0 \xrightarrow{p} \SL(n,\Z) \rightarrow 1
    \end{align*}
    is a central extension of $\SL(n,\Z)$. By what we already concluded, $G_0$ is cocycle-hyperlinear. Since $G_0$ is an index-2 subgroup of $G$, we get that $G$ is cocycle-hyperlinear by Proposition \ref{Prop: coamenable overgroups}. Now consider a central extension 
    \begin{align*}
        1 \rightarrow Z \rightarrow G \xrightarrow{p} \PSL(n,\Z) \rightarrow 1
    \end{align*}
    Let $q: \SL(n,\Z) \rightarrow \PSL(n,\Z)$ be the quotient map and let $\widetilde{G} = \{(h,g) \in \SL(n,\Z) \times G \; | \; q(h) = p(e)\}$. Projecting onto the first coordinate gives a central extension
    \begin{align*}
        1 \rightarrow Z \rightarrow \widetilde{G} \xrightarrow{p} \SL(n,\Z) \rightarrow 1
    \end{align*}
    Then $\widetilde{G}$ is cocycle-hyperlinear and the projection onto the second coordinate G has kernel $Z(\SL(n,\Z)) \times \id$. Thus once again the result follows from Proposition \ref{Prop: quotient by central subgroup}.    
\end{proof}

\section{Other permanence properties}

\subsection{Direct products and directed unions}

\begin{prop}
\label{Prop: direct product of SH is in SH}
    Suppose that $G$ and $H$ are cocycle-hyperlinear, then $G \times H$ is cocycle-hyperlinear. 
\end{prop}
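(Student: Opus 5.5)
We need to show that $L_\omega(G\times H)$ is Connes-embeddable for every normalized $\omega \in \rmZ^2(G\times H,\T)$. The natural route is through Thom's criterion as recalled in the introduction. Cocycle-hyperlinearity of a group is equivalent to hyperlinearity of all its central extensions by countable abelian groups. However, a central extension of $G\times H$ need not split as a product of extensions, so I would instead work directly at the cocycle level.

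\textbf{Step 1: normal form for the cocycle.} By a K\"unneth-type decomposition of $\rmH^2(G\times H,\T)$, every class is represented by a cocycle of the form
\begin{align*}
\omega((g,h),(g',h')) = \omega_1(g,g')\,\omega_2(h,h')\,\beta(h,g'),
\end{align*}
where $\omega_1\in\rmZ^2(G,\T)$, $\omega_2\in\rmZ^2(H,\T)$, and $\beta: H\times G\to\T$ is a bicharacter, i.e. $\beta$ is multiplicative in each variable. The bicharacter term accounts for the $\Hom(\rmH_1(G)\otimes\rmH_1(H),\T)$ part. Cohomologous cocycles give isomorphic twisted algebras, so we may assume $\omega$ has exactly this form. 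This is the step I expect to be the main obstacle: I would verify it via the Lyndon--Hochschild--Serre spectral sequence for the split extension $G\to G\times H\to H$, or by an explicit restriction argument. Concretely, restrict $\omega$ to $G\times\{e\}$ and $\{e\}\times H$, and define $\beta$ from the commutator $u_{(e,h)}u_{(g,e)}u_{(e,h)}^*u_{(g,e)}^*$ in $L_\omega(G\times H)$. This commutator is a scalar, since $(e,h)$ and $(g,e)$ commute. Then show that $\beta$ is a bicharacter and that these three pieces determine the class.

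\textbf{Step 2: realise the algebra inside a tensor product.} Let $B = L_{\omega_1}(G)\,\bar\otimes\, L_{\omega_2}(H)$. I would look for unitaries $U_g$, $V_h$ with $U_gU_{g'}=\omega_1(g,g')U_{gg'}$, $V_hV_{h'}=\omega_2(h,h')V_{hh'}$ and $V_hU_g=\beta(h,g)U_gV_h$, such that $W_{(g,h)}=U_gV_h$ gives a trace-preserving embedding $L_\omega(G\times H)\hookrightarrow$ some Connes-embeddable algebra. The bicharacter twist is the only obstruction to using $B$ directly. To absorb it, I would tensor with the twisted group algebra of an abelian group. Let $A=\rmH_1(G,\Z)$ and $A'=\rmH_1(H,\Z)$ (countable abelian, since $\beta$ factors through the abelianizations), and let $C = L_{\beta'}(A\times A')$, where $\beta'$ is the bicharacter cocycle $\beta'((a,a'),(b,b'))=\beta(a',b)$ induced by $\beta$. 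Then $C$ is Connes-embeddable because $A\times A'$ is amenable, so every twisted algebra of it is hyperfinite. Set
\begin{align*}
U_g=u^{G}_g\otimes 1\otimes c_{(\bar g,0)},\qquad V_h = 1\otimes u^{H}_h\otimes c_{(0,\bar h)}
\end{align*}
in $B\,\bar\otimes\, C$, where $\bar g$ and $\bar h$ denote the images in the abelianizations. One checks that $W_{(g,h)}=U_gV_h$ satisfies $W_xW_y=\omega(x,y)W_{xy}$, with the $\beta$-factor coming from the $C$ component. Moreover, $\tau(W_{(g,h)})=\delta_{g,e}\delta_{h,e}$, because the first two tensor legs already have trace $\delta_{g,e}\delta_{h,e}$. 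Hence $L_\omega(G\times H)$ embeds trace-preservingly into $L_{\omega_1}(G)\bar\otimes L_{\omega_2}(H)\bar\otimes C$.

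\textbf{Step 3: conclude.} By hypothesis, $L_{\omega_1}(G)$ and $L_{\omega_2}(H)$ are Connes-embeddable, and $C$ is hyperfinite. Tensor products of Connes-embeddable algebras are Connes-embeddable, since $R^\omega\bar\otimes R^\omega\bar\otimes R\hookrightarrow (R\bar\otimes R\bar\otimes R)^\omega\cong R^\omega$. Subalgebras of Connes-embeddable algebras are Connes-embeddable, so $L_\omega(G\times H)$ is Connes-embeddable. The routine parts are the cocycle bookkeeping in Step 2 and the standard tensor-product fact in Step 3. The genuine content is the normal form of Step 1, including checking that the bicharacter really factors through the abelianizations so that $C$ is defined over an amenable group.
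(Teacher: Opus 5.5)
Your proposal is correct and follows essentially the same route as the paper: a K\"unneth-type normal form $\omega \sim \omega_G\times\omega_H\times(\text{bicharacter})$, then the embedding $z_{(g,h)}\mapsto x_g\otimes y_h\otimes u_{[g]}v_{[h]}$ into $L_{\omega_G}(G)\bar\otimes L_{\omega_H}(H)\bar\otimes L_\rho(G_{\ab}\times H_{\ab})$, where $\rho$ is the bicharacter cocycle on the abelianizations. For Step 1 the paper takes your ``explicit restriction'' option: it sets $b(g,h)=\omega((g,e),(e,h))\overline{\omega((e,h),(g,e))}$, which is your commutator up to complex conjugation, and checks directly from cocycle identities that $\omega$ differs from $\omega_G(g,g')\,\omega_H(h,h')\,\overline{b(g',h)}$ by the coboundary of $\eta(g,h)=\omega((g,e),(e,h))$.
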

Before proving this proposition we need the usual Kunneth formula for group cohomology which is known to experts. However, we add a proof for completeness. Recall that a $G-H$ \textit{bicharacter} $b: G \times H \rightarrow \T$ is a Borel map satisfying $b(gg',h) = b(g,h)b(g',h)$ and $b(g,hh')= b(g,h)b(g,h')$ for all $g,g' \in G$ and $h,h' \in H$.

\begin{lem}
\label{Lemma: cocycle breaks for direct products}
    Let $\omega \in \rmZ^2(G \times H, \T)$ be a normalized cocycle, then $\omega$ is cohomologous to a cocycle of the form $\omega_G \times \omega_H \times b : G \times H \rightarrow \T$ where $\omega_G \in \rmZ^2(G,\T)$, $\omega_H \in \rmZ^2(H,\T)$ and $b$ is a $G-H$ bicharacter. 
\end{lem}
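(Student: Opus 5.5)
We prove the lemma by explicit cocycle manipulation, splitting $\omega$ into a $G$-part, an $H$-part, and a mixed part. Identify $G$ and $H$ with the subgroups $G\times\{e\}$ and $\{e\}\times H$, and set $\omega_G=\omega|_{G\times G}$ and $\omega_H=\omega|_{H\times H}$. These are normalized 2-cocycles on $G$ and $H$.

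\textbf{Step 1 (normalizing the section).} Define $\beta(g,h)=\omega(g,h)$, where $g=(g,e)$ and $h=(e,h)$, so that $(g,h)=g\cdot h$ in $G\times H$. Replace $\omega$ by the cohomologous cocycle $\omega'(x,y)=\omega(x,y)\,c(x)c(y)\overline{c(xy)}$ with $c(g,h)=\beta(g,h)$. This choice is normalized, since $c|_{G}=c|_{H}=1$. It amounts to working in $L_\omega(G\times H)$ with the unitaries $U_{(g,h)}:=u_gu_h$. After this change we have $\omega'(g,h)=1$ for $g\in G$, $h\in H$, while $\omega'$ still restricts to $\omega_G$ and $\omega_H$.

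\textbf{Step 2 (extracting the bicharacter).} In $L_{\omega'}(G\times H)$, define
\[b(g,h)=u_hu_gu_h^*u_g^*.\]
Since $u_hu_g=\omega'(h,g)u_{(g,h)}=\omega'(h,g)u_gu_h$, this is the scalar $\omega'(h,g)$. So $b(g,h)=u_hu_gu_h^*u_g^*$ is a scalar, and it remains to check multiplicativity. We have
\[u_hu_{gg'}u_h^*=\overline{\omega_G(g,g')}\,u_hu_gu_h^*\,u_hu_{g'}u_h^*=\overline{\omega_G(g,g')}\,b(g,h)u_g\,b(g',h)u_{g'}=b(g,h)b(g',h)u_{gg'}.\]
Hence $b(gg',h)=b(g,h)b(g',h)$, and by symmetry $b$ is also multiplicative in $h$. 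Alternatively, the same identities follow purely algebraically from the cocycle identity applied to the triples $(h,g,g')$, $(g,h,g')$, and so on.

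\textbf{Step 3 (computing the full cocycle).} Expand
\[U_{(g,h)}U_{(g',h')}=u_gu_hu_{g'}u_{h'}=b(g',h)\,u_gu_{g'}u_hu_{h'}=b(g',h)\,\omega_G(g,g')\,\omega_H(h,h')\,U_{(gg',hh')}.\]
This exhibits $\omega'((g,h),(g',h'))=\omega_G(g,g')\,\omega_H(h,h')\,b(g',h)$, which is exactly the claimed product form, with bicharacter $(g,h')\mapsto b(g',h)$ after reordering variables. To stay honest about using the algebra only as bookkeeping, we carry out the computation with the cocycle identity directly on the abstract twisted group algebra; this is legitimate because the relations determine $\omega$ on generators.

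\textbf{Main obstacle.} The delicate point is verifying that $b$ is genuinely a bicharacter purely from the cocycle identities, in particular tracking where $\omega_G$ cancels in Step 2. The operator computation above handles this cleanly, and it is the route we would take.
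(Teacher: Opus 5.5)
Your proposal is correct and is essentially the paper's proof: you use the same restrictions $\omega_G,\omega_H$ and the same normalized cochain $\eta(g,h)=\omega((g,e),(e,h))$. Your commutator $b(g,h)=\omega'((e,h),(g,e))$ is the conjugate of the paper's $b$, so your form $\omega_G(g,g')\omega_H(h,h')b(g',h)$ matches the paper's $\omega_G(g,g')\omega_H(h,h')\overline{b(g',h)}$. The only difference is bookkeeping: you verify the bicharacter and product identities through associativity of the twisted group algebra, where $\lambda u_x=\mu u_x$ forces $\lambda=\mu$, whereas the paper writes out the needed instances of the cocycle identity explicitly.
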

\begin{proof}
    Let $\omega_G(g,g') = \omega((g,e),(g',e))$ and $\omega_H(h,h') = \omega((e,h),(e,h'))$, then $\omega_G$ and $\omega_H$ are clearly normalized 2-cocycles on the respective groups. Define the following Borel map $b: G \times H \rightarrow \T$ by: 
    \begin{align*}
        b(g,h) = \frac{\omega((g,e),(e,h))}{\omega((e,h),(g,e))}
    \end{align*}
    We claim that $b$ is a bicharacter. Indeed notice that by the cocycle identity we get: 
    \begin{align}
    \label{Eq for bicharacter G}
        \omega((g,e),(g',e))\omega((gg',e),(e,h)) &= \omega((g',e),(e,h))\omega((g,e),(g',h)) \text{ and }  \\ \label{eq for bicharacter H}
        \omega((e,h),(g,e))\omega((g,h),(g',e)) &= \omega((g,e),(g',e))\omega((e,h),(gg',e)) \\  \label{third eq for bicharacter}
        \omega((g,e),(e,h))\omega((g,h),(g',e)) &= \omega((e,h),(g',e))\omega((g,e),(g',h))
    \end{align}
    Now we calculate the following from Equations \ref{Eq for bicharacter G} and \ref{eq for bicharacter H}:
    \begin{align*}
        b(gg',h) = \frac{\omega((gg',e),(e,h))}{\omega((e,h),(gg',e))} &= \frac{\omega((g',e),(e,h))\omega((g,e),(g',h))\omega((g,e),(g',e))}{\omega((g,e),(g',e))\omega((e,h),(g,e))\omega((g,h),(g',e))} \\ &= \frac{\omega((g',e),(e,h))\omega((g,e),(g',h))}{\omega((e,h),(g,e))\omega((g,h),(g',e))} \\ \text{ By equation \ref{third eq for bicharacter} } &= \frac{\omega((g',e),(e,h))\omega((g,e),(e,h))}{\omega((e,h),(g,e))\omega((e,h),(g',e))} \\ &= b(g,h) b(g',h)
    \end{align*}
    By a completely symmetric argument we have $b(g,hh') = b(g,h)b(g,h')$ and hence $b$ is a bicharacter. Now consider a 1-cochain $\eta$ given by: $\eta(g,h) = \omega((g,e),(e,h))$, and note that $\eta$ is still normalized because $\omega$ is normalized. Again consider the following cocycle identities: 
    \begin{align}
    \label{eq 4 bicharacter}
        \omega((g,e),(e,h))\omega((g,h),(g',h')) &= \omega((g,e),(g',hh'))\omega((e,h),(g',h')) \\
        \label{eq 5 bicharacter}
       \omega((g,e),(g',e))\omega((gg',e),(e,hh')) &= \omega((g',e),(e,hh'))\omega((g,e),(g',hh'))\\
        \label{eq 6 bicharacter}
       \omega((e,h),(g',e))\omega((g',h),(e,h')) &= \omega((g',e),(e,h'))\omega((e,h),(g',h'))
       \\
        \label{eq 7 bicharacter}
       \omega((g',e),(e,h))\omega((g',h),(e,h')) &= \omega((e,h),(e,h'))\omega((g',e),(e,hh'))
    \end{align}

    Now we have:
    \begin{align*}
&\eta(g,h)\eta(g',h')\overline{\eta(gg',hh')}\omega((g,h),(g',h')) \\ =\omega((g,e),(e,h))&\omega((g',e),(e,h'))\overline{\omega((gg',e),(e,hh'))}\omega((g,h),(g',h')) \\ \text{ by } \ref{eq 4 bicharacter} = \omega((g, e), &(g',hh'))\omega((e,h),(g',h')) \omega((g',e),(e,h'))\overline{\omega((gg',e),(e,hh'))} \\ \text{ by } \ref{eq 5 bicharacter} = \omega((g, e), &(g',e))\omega((g',e),(e,h')) \omega((e,h),(g',h'))\overline{\omega((g',e),(e,hh'))}\\ \text{ by } \ref{eq 6 bicharacter} = \omega((g, e), &(g',e))\omega((e,h),(g',e)) \omega((g',h),(e,h'))\overline{\omega((g',e),(e,hh'))} \\ \text{ by } \ref{eq 7 bicharacter} = \omega((g, e), &(g',e))\omega((e,h),(e,h')) \omega((e,h),(g',e))\overline{\omega((g',e),(e,h))} \\ & = \omega_G(g,g')\omega_H(h,h')\overline{b(g',h)} 
    \end{align*}
Hence $\omega$ is cohomologous to a product of the restricted cocycles with a bicharacter as required. 
\end{proof}

\begin{proof}[Proof of Proposition \ref{Prop: direct product of SH is in SH}]
    Let $\omega$ be a 2-cocycle on $G \times H$. By Lemma \ref{Lemma: cocycle breaks for direct products} we can assume that $\omega = \omega_G \times \omega_H \times b$. Let $A = G_{\ab}$ and $B = \rmH_{\ab}$ be the abelianizations and notice that a bicharacter factors through the abelianizations, and hence gives a bicharacter $\beta : A \times B \rightarrow \T$. Consider the 2-cocycle $\rho$ on $A \times B$ given by: 
    \begin{align*}
        \rho((a,b),(a',b')) = \overline{\beta(a',b)}
    \end{align*}
    One checks that this is indeed a 2-cocycle because: 
    \begin{align*}
        &\rho((a,b),(a',b'))\rho((aa',bb'),(a'', b'' )) = \overline{\beta(a',b)\beta(a'' , bb')} \\ & = \overline{\beta(a',b)\beta(a'',b) \beta(a'',b')} = \overline{\beta(a'a'', b)\beta(a'' ,b')} \\ &= \rho((a,b),(a'a'',b'b'')) \rho ((a',b'), (a'',b''))
    \end{align*}
    Let $M_\beta = L_\rho(A \times B)$. Since $A \times B$ is abelian, by Proposition \ref{Prop: treeable groups are SH}, $M_\beta$ is Connes-embeddable. Let $(u_a)_{a \in A}$ and $(v_b)_{b \in B}$ be the canonical unitaries in $M_\beta$ and notice that they satisfy: 
    \begin{align*}
        u_av_b = \beta(a,b)v_b u_a \text{ in } M_\beta
    \end{align*}
    Let $M_G = L_{\omega_G}(G)$ and $M_H = L_{\omega_H}(H)$ and
    let $(x_g)_{g \in G}$ and $(y_h)_{h \in H}$ denote the canonical group unitaries in $M_G$ and $M_H$. Let $(z_{(g,h)})_{(g,h) \in G \times H}$ denote the canonical unitaries in $L_\omega(G \times H)$. Consider the map $\theta$ from the canonical unitaries in $L_\omega(G \times H)$ to $M_G \otimes M_H \otimes M_\beta$ given by: 
    \begin{align*}
        \theta(z_{(g,h)}) = x_g \otimes y_h \otimes u_{[g]}v_{[h]}
    \end{align*}
    where $[g]$ and $[h]$ denote their images in the abelianizaitons. One checks that $(u_{[g]} v_{[h]})(u_{[g']}v_{[h']}) = \overline{b(g',h)}u_{[gg']}v_{[hh']}$ and consequently:
    \begin{align*}
        \theta(z_{(g,h)})\theta(z_{(g', h')}) &= (x_g \otimes y_h \otimes u_{[g]}v_{[h]})(x_{g'} \otimes y_{h'} \otimes u_{[g']}v_{[h']}) \\ &= \omega_G(g,g') x_{gg'} \otimes \omega_H(h,h') y_{hh'} \otimes \overline{\beta(g',h)} u_{[gg']}v_{[hh']} \\ &= \omega((g,h)(g',h'))\theta(z_{(gg',hh')}) \\&= \theta(z_{(g,h)}z_{(g',h')})
    \end{align*}
    Thus $\theta$ extends to a *-homomorphism to the algebraic twisted group algebra. Notice that $\tau(\theta(z_{(g,h)})) = \tau_{M_G}(x_g)\tau_{M_H}(y_h) \tau_{M_\beta}(u_{[g]}v_{[h]}) = \delta_{(g,h),(e,e)}$ and hence $\theta$ preserves the trace on the canonical unitaries. Thus $\theta$ extends to a normal trace preserving embedding:
    \begin{align*}
        \theta: L_{\omega}(G \times H) \rightarrow L_{\omega_G}(G) \otimes L_{\omega_H}(H) \otimes L_{\rho}(A \times B)
    \end{align*}
    Since $G$ and $H$ are cocycle-hyperlinear, all three von Neumann algebras on the right in the above equation are Connes-embeddable and hence $L_\omega(G \times H)$ is Connes-embeddable. 
\end{proof}

\begin{ex}
    There are many examples of products of treeable groups that are not treeable. If $G$ and $H$ are infinite and at least one of them contain one infinite order element, then the cost of any essentially free p.m.p. action of $G \times H$ is 1. But if a group admits a free p.m.p. action with a treeing of cost 1, it is amenable. So whenever at least one of $G$ and $H$ is non-amenable we have that $G \times H$ is not treeable. We refer the reader to \cite{Gaboriau_ICM} for a survey of these results. In particular $\mathbb{F}_m \times \mathbb{F}_n$ is not treeable but is cocycle-hyperlinear whenever at least one of $m$ or $n$ is at least 2.    
\end{ex}

\begin{lem}
\label{Lemma: directed union}
    Let $G_1 \leq G_2 \leq...$ be an increasing sequence of groups that are cocycle-hyperlinear. Then the directed union $G = \bigcup_n G_n$ is cocycle-hyperlinear. 
\end{lem}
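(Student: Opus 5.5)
Fix a normalized cocycle $\omega \in \rmZ^2(G,\T)$ and let $\omega_n$ be its restriction to $G_n \times G_n$. Each $\omega_n$ is a normalized 2-cocycle on $G_n$, so by hypothesis $L_{\omega_n}(G_n)$ is Connes-embeddable for every $n$. The idea is to realize $L_\omega(G)$ as the WOT-closure of an increasing union of copies of the $L_{\omega_n}(G_n)$, and then use the same permanence result invoked in the proof of Proposition \ref{prop: actions of eq relations CE}: by \cite[Proposition 4.1(iii)]{OzawaSurvey}, a tracial von Neumann algebra generated by an increasing sequence of Connes-embeddable subalgebras is Connes-embeddable.

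\emph{Step 1: the subalgebras.} Let $(u_g)_{g \in G}$ be the canonical unitaries in $L_\omega(G)$, and let $M_n$ be the von Neumann subalgebra generated by $\{u_g \mid g \in G_n\}$. These unitaries satisfy $u_gu_h = \omega_n(g,h)u_{gh}$ for $g,h \in G_n$ and $\tau(u_g) = \delta_{g,e}$. The map $u^{(n)}_g \mapsto u_g$ from the canonical unitaries of $L_{\omega_n}(G_n)$ therefore extends to a trace-preserving $*$-homomorphism on the twisted group algebra $\mathbb{C}_{\omega_n}[G_n]$. By uniqueness of the GNS construction for the trace, it extends further to a normal trace-preserving isomorphism $L_{\omega_n}(G_n) \cong M_n$. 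The same argument applied to $G_n \le G_{n+1}$ shows $M_n \subseteq M_{n+1}$.

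\emph{Step 2: density.} Every $g \in G$ lies in some $G_n$, so $\bigcup_n M_n$ contains the twisted group algebra $\mathbb{C}_\omega[G]$. That algebra is WOT-dense in $L_\omega(G)$, so $L_\omega(G) = \overline{\bigcup_n M_n}^{\,WOT}$.

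\emph{Step 3: conclusion.} Each $M_n$ is Connes-embeddable, so by \cite[Proposition 4.1(iii)]{OzawaSurvey} the algebra $L_\omega(G)$ is Connes-embeddable. Since $\omega$ was arbitrary, $G$ is cocycle-hyperlinear.

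There is no serious obstacle here. The only point needing care is Step 1: one must check that the restricted cocycle twist on $G_n$ produces a subalgebra isomorphic to $L_{\omega_n}(G_n)$ with the restricted trace, which is the standard trace-preserving-extension argument already used in Lemma \ref{Lemma: finite descent}. If one prefers to avoid citing Ozawa, an alternative for Step 3 is to build an embedding into $R^\omega$ by a diagonal argument. For each finite set $F \subseteq \mathbb{C}_\omega[G]$, choose $n$ with $F \subseteq M_n$ and take microstate approximations from $M_n$; these assemble into microstates for $L_\omega(G)$.
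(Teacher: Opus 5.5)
Your proposal is correct and follows the paper's own proof. The paper likewise realizes $L_\omega(G)$ as the von Neumann algebra generated by the increasing union of the $L_{\omega}(G_n)$ and concludes by \cite[Proposition 4.1(iii)]{OzawaSurvey}; your Steps 1 and 2 simply write out the ``easy check'' that the paper leaves to the reader.
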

\begin{proof}
    It is an easy check that if $\omega \in \rmH^2(G,\T)$, then $L_{\omega}(G) = \left( \bigcup_n L_\omega(G_n) \right)''$. Now the result follows from \cite[Proposition 4.1(iii)]{OzawaSurvey}. 
\end{proof}

\begin{cor}
\label{Cor: countable direct sum}
    Let $G_n$ be a cocycle-hyperlinear group for all $n \in \N$. Then the countable direct sum $G = \bigoplus G_n$ is cocycle-hyperlinear.
\end{cor}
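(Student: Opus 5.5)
The plan is to combine Proposition \ref{Prop: direct product of SH is in SH} with Lemma \ref{Lemma: directed union}. For each $k \in \N$, set $H_k = G_1 \times G_2 \times \cdots \times G_k$, viewed inside $G = \bigoplus_n G_n$ as the elements supported on the first $k$ coordinates. Then $H_1 \leq H_2 \leq \cdots$ is an increasing sequence of subgroups. Since every element of the direct sum has finite support, $G = \bigcup_k H_k$.

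The first step is to show by induction on $k$ that each $H_k$ is cocycle-hyperlinear. The base case $H_1 = G_1$ holds by hypothesis. For the inductive step, note that $H_{k+1} \cong H_k \times G_{k+1}$ as abstract groups. Since both factors are cocycle-hyperlinear (the first by the inductive hypothesis), Proposition \ref{Prop: direct product of SH is in SH} shows that $H_{k+1}$ is cocycle-hyperlinear. We also need that cocycle-hyperlinearity is invariant under group isomorphism. This holds because an isomorphism $\psi: H \to H'$ transports each cocycle $\omega'$ on $H'$ to the cocycle $\omega' \circ (\psi \times \psi)$ on $H$, and it induces a trace-preserving isomorphism of the corresponding twisted group von Neumann algebras.

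The second step applies Lemma \ref{Lemma: directed union} to the chain $(H_k)_k$. This gives that $G = \bigcup_k H_k$ is cocycle-hyperlinear.

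There is no real obstacle here. The one point to check is that the inclusions $H_k \hookrightarrow H_{k+1}$, given by $(g_1,\dots,g_k) \mapsto (g_1,\dots,g_k,e)$, are compatible with the identifications inside $G$, so that the union in Lemma \ref{Lemma: directed union} really is $G$. This is immediate from the definitions.
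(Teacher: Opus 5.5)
Your proof is correct and follows the paper's own argument: the paper says the corollary is immediate from Proposition \ref{Prop: direct product of SH is in SH}, applied inductively to the finite partial products, and Lemma \ref{Lemma: directed union}, applied to the chain they form. Your remarks on isomorphism invariance and on the compatibility of the inclusions spell out details the paper leaves implicit.
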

\begin{proof}
    This is immediate from Proposition \ref{Prop: direct product of SH is in SH} and Lemma \ref{Lemma: directed union}. 
\end{proof}

Let us call a group \textit{locally cocycle-hyperlinear} if every finitely generated subgroup of $G$ is cocycle-hyperlinear. 

\begin{cor}
    If $G$ is locally cocycle-hyperlinear, then it is cocycle-hyperlinear
\end{cor}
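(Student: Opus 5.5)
Since $G$ is countable, we enumerate its elements as $g_1, g_2, \dots$ and set $G_n = \langle g_1, \dots, g_n \rangle$. This gives an increasing sequence $G_1 \leq G_2 \leq \cdots$ of finitely generated subgroups whose union is $G$. Each $G_n$ is a finitely generated subgroup of $G$, so it is cocycle-hyperlinear because $G$ is locally cocycle-hyperlinear. Lemma \ref{Lemma: directed union} then shows directly that $G = \bigcup_n G_n$ is cocycle-hyperlinear.

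The only point to check is that the cocycles really restrict as required. Fix a normalized $\omega \in \rmZ^2(G,\T)$. The restriction $\omega|_{G_n \times G_n}$ is a normalized 2-cocycle on $G_n$. The von Neumann subalgebra of $L_\omega(G)$ generated by $\{u_g \; | \; g \in G_n\}$ is, with the restricted trace, trace-preservingly isomorphic to $L_{\omega|_{G_n}}(G_n)$. Indeed, both are generated by unitaries satisfying the same twisted relations, and the trace $\Tr(u_g) = \delta_{g,e}$ matches. This identification is exactly what the proof of Lemma \ref{Lemma: directed union} already uses.

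We do not expect any step to be a real obstacle. The one place to be careful is countability: the enumeration must produce an increasing \emph{sequence} of subgroups, which is what Lemma \ref{Lemma: directed union} is stated for. Countability of $G$ is part of the standing assumptions, so this holds.
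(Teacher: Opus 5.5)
Your proposal is correct and is essentially the paper's proof: enumerate $G$, take the increasing chain of finitely generated subgroups $\langle g_1,\dots,g_n\rangle$, which are cocycle-hyperlinear by hypothesis, and apply Lemma \ref{Lemma: directed union}. Your extra remark identifying the subalgebra generated by $\{u_g : g \in G_n\}$ with $L_{\omega|_{G_n}}(G_n)$ is exactly the implicit content of that lemma's proof.
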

\begin{proof}
    Enumerate $G = \{g_1,g_2,g_3,..\}$ and let $\rmH_n$ be the group generated by $\{g_1,...,g_n\}$. By assumption, $\rmH_n$ is cocycle-hyperlinear and by Lemma \ref{Lemma: directed union}, $G$ is cocycle-hyperlinear.  
\end{proof}

\subsection{Quotients and overgroups}

In this section we show that certain quotients of cocycle-hyperlinear groups remain cocycle-hyperlinear. 

\begin{prop}
\label{Prop: quotients by finite subgroups}
    Let $N < G$ be a finite normal subgroup. If $G$ is cocycle-hyperlinear then $G/N$ is cocycle-hyperlinear. 
\end{prop}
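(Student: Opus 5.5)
Let $Q = G/N$ with quotient map $\pi: G \rightarrow Q$, and fix a normalized cocycle $\omega \in \rmZ^2(Q,\T)$. Pull it back to $\widetilde{\omega} = \omega \circ (\pi \times \pi) \in \rmZ^2(G,\T)$; this is a normalized 2-cocycle on $G$, so $L_{\widetilde{\omega}}(G)$ is Connes-embeddable by hypothesis. The plan is to realize $L_\omega(Q)$ as a corner (or a subalgebra) of $L_{\widetilde{\omega}}(G)$ and then use that corners and subalgebras of Connes-embeddable algebras are Connes-embeddable.

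Concretely, let $(u_g)_{g \in G}$ be the canonical unitaries of $L_{\widetilde{\omega}}(G)$. Since $\widetilde{\omega}$ is trivial on $N \times G$ and $G \times N$ (because $\omega$ is normalized and $\pi(N) = e$), we get $u_n u_m = u_{nm}$ for $n,m \in N$, $u_g u_n = u_{gn}$ and $u_n u_g = u_{ng}$. Hence $p = \frac{1}{|N|}\sum_{n \in N} u_n$ is a projection (finiteness of $N$ is used here), with $\tau(p) = 1/|N|$. Normality of $N$ and the relations above give $u_g p = \frac{1}{|N|}\sum_n u_{gn} = \frac{1}{|N|}\sum_n u_{n'g} = p u_g$, so $p$ is central in $L_{\widetilde{\omega}}(G)$, and $u_g p$ depends only on $gN$. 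Define $w_{q} = u_g p$ for any $g \in \pi^{-1}(q)$. Then
\begin{align*}
w_q w_{q'} = u_g u_{g'} p = \omega(q,q') u_{gg'} p = \omega(q,q') w_{qq'},
\end{align*}
and the normalized trace $\tau_p = |N|\,\tau(\cdot\, p)$ on $p L_{\widetilde{\omega}}(G) p$ satisfies $\tau_p(w_q) = \sum_{n} \tau(u_{gn}) = 1$ if $g \in N$ and $0$ otherwise, i.e. $\tau_p(w_q) = \delta_{q,e}$. Therefore $w_q \mapsto$ canonical unitaries gives a trace-preserving embedding $L_\omega(Q) \hookrightarrow p L_{\widetilde{\omega}}(G) p$ (in fact onto $L_{\widetilde{\omega}}(G)p$), by the usual uniqueness of the twisted group von Neumann algebra determined by its trace on the canonical unitaries.

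The remaining step, and the only mildly delicate one, is that the corner $pMp$ of a Connes-embeddable $M$ with its renormalized trace is Connes-embeddable. Since $p$ is central here, $M = Mp \oplus M(1-p)$, and a trace-preserving embedding $M \hookrightarrow R^\omega$ sends $p$ to a projection $e$ of trace $1/|N|$ with $Mp \hookrightarrow eR^\omega e \cong (eRe)^\omega$-type corner, which is isomorphic to $R^\omega$ after renormalizing (the projection lifts to projections $e_k$ of trace $1/|N|$ in $R$, and $e_kRe_k \cong R$). I would cite this standard fact (e.g. via \cite{OzawaSurvey}) rather than reprove it. This yields that $L_\omega(Q)$ is Connes-embeddable for every $\omega$, so $G/N$ is cocycle-hyperlinear.
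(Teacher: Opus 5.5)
Your proof is correct and follows the paper's argument essentially step for step: pull the cocycle back to $G$, form the central projection $p=\frac{1}{|N|}\sum_{n\in N}u_n$, identify $L_\omega(Q)$ with the corner $pL_{\widetilde{\omega}}(G)p$ equipped with the trace $|N|\tau$, and conclude because corners of Connes-embeddable algebras are Connes-embeddable. The only differences are cosmetic: you define $w_q=u_gp$ via independence of the coset representative, where the paper uses a normalized section with $v_q=pu_{s(q)}p$, and you sketch the corner fact that the paper simply cites.
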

\begin{proof}
  Let $Q = G/N$, $\pi: G \rightarrow Q$ denote the quotient map, $\omega_0 \in \rmH^2(Q, \T)$ and $\omega(g,h) \coloneqq \omega_0(\pi(g),\pi(h))$ denote the induced cocycle in $\rmH^2(G,\T)$. Let $\{u_g\}$ denote the canonical unitaries in $L_\omega(G)$, and notice that $L_\omega(G)$ is Connes-embeddable by hypothesis. Let $p = \frac{1}{|N|} \sum_{n \in N} u_n$. Notice that since $\omega(m,n) = 1$ for all $m,n \in N$: 
    \begin{align*}
        p^2 = \frac{1}{|N|^2}\sum_{m,n} u_m u_n = \frac{1}{|N|^2}\sum_{m,n} \omega(m,n) u_{m,n} = \frac{1}{|N|^2}\sum_{m,n} u_{mn} = \frac{1}{|N|}\sum_{m} u_{m} = p
    \end{align*}
    Since $p$ is clearly self adjoint, $p$ is a projection. Moreover we Calculate: 
    \begin{align*}
        u_g u_n &= \omega(g,n) u_{gn} = \omega_0(\pi(g),1)u_{gn} = u_{gn} = u_{gng^{-1}g} = u_{gng^{-1}}u_g
    \end{align*}
    for all $g \in G$ and $n \in N$. Since $N$ is normal, conjugation by $g \in G$ permutes the elements in $N$ and hence $p$ commutes with $u_g$ for all $g \in G$, thus $p \in Z(L_{\omega}(G))$. Now pick a normalized section $s: Q \rightarrow G$ and for all $q \in Q$ let $v_q = pu_{s(q)}p \in pL_{\omega}(G)p$. Let $n(q,r) = s(q)s(r)s(qr)^{-1}$ for all $q,r \in Q$ and calculate: 
    \begin{align*}
        v_q v_r = pu_{s(q)}u_{s(r)}p = \omega(s(q),s(r))pu_{s(q)s(r)} = \omega_0(q,r)p u_{n(q,r)}u_{s(qr)}p = \omega_0(q,r)v_{qr}  
    \end{align*}
    The last equality above follows as $pu_n = p$ for all $n \in N$. Let $\tau$ be the normalized trace on $p L_{\omega}(G) p$ given by $\tau = |N| \tau_{L_{\omega}(G)}$ and notice that $\tau(v_q) = \delta_{q,e}$. Moreover, $p L_{\omega}(G) p $ is generated by $\{v_q \; | \; q \in Q\}$ and hence $pL_{\omega}(G)p \cong L_{\omega_0}(Q)$. Since corners of Connes-embeddable von Neumann algebras are Connes-embeddable, the result is proven. 
\end{proof}

On the other hand, if we restrict ourselves to quotients by central subgroups, then we do not require finiteness as depicted in the next proposition: 

\begin{prop}
\label{Prop: quotient by central subgroup}
    Let $N<G$ be a central normal subgroup, i.e., $N < Z(G)$. If $G$ is cocycle-hyperlinear, so is $G/N$. 
\end{prop}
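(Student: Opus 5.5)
Let $Q = G/N$ with quotient map $\pi: G \rightarrow Q$. Fix a normalized cocycle $\omega_0 \in \rmZ^2(Q,\T)$ and let $\omega = \omega_0 \circ (\pi \times \pi) \in \rmZ^2(G,\T)$. By hypothesis $L_\omega(G)$ is Connes-embeddable. The finite-subgroup argument of Proposition \ref{Prop: quotients by finite subgroups} used the central projection $p = \frac{1}{|N|}\sum_{n} u_n$, which no longer exists when $N$ is infinite. I will replace the projection $p$ by a \emph{character}, that is, a state on the abelian algebra $L(N)$.

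First I note that, exactly as in the previous proof, $\omega$ is trivial on $N \times G$ and on $G \times N$. Since $N$ is central, we get $u_g u_n = u_{gn} = u_{ng} = u_n u_g$ for $g \in G$ and $n \in N$. Hence $\{u_n\}_{n \in N}$ generates a copy of $L(N) \cong L^\infty(\widehat{N})$ inside the center of $L_\omega(G)$.

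The key step is to avoid disintegrating over $\widehat N$ (where the fiber at the trivial character would be a null set). Instead I use a direct twisting trick. Pick a normalized section $s: Q \rightarrow G$ and write $g = n_g\, s(\pi(g))$ with $n_g \in N$. I want to realize $L_{\omega_0}(Q)$ inside an ultrapower of something built from $L_\omega(G)$. Take a Følner-type sequence of functions: $N$ is abelian, hence amenable, so choose finite Følner sets $F_k \subset N$ and set
\[
\xi_k = |F_k|^{-1/2}\sum_{n \in F_k} u_n .
\]
The candidate unitaries in $L_\omega(G)^\omega$ would be obtained by compressing $u_{s(q)}$ by the states $\varphi_k(x) = \langle x \xi_k, \xi_k\rangle_{L^2}$. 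Since $u_n$ is central, $\varphi_k(u_{s(q)}) = |F_k|^{-1}\sum_{m,n\in F_k}\tau(u_{s(q)}u_{n}u_m^*)$, which tends to $\delta_{q,e}$. Moreover $\varphi_k$ becomes asymptotically $N$-invariant, and this gives an embedding of the GNS algebra. Cleaner still is a tensor trick akin to Lemma \ref{Lemma: finite descent}: define $W_g$ on $L_\omega(G)\otimes \ell^\infty$-approximations. I expect this step to be the main obstacle, because a tracial state obtained from an invariant mean on $N$ need not be given by a normal state on $L_\omega(G)$.

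For that reason I would actually run the argument through Thom's criterion, as the paper advertises. The statement is equivalent to: every central extension $E$ of $Q$ by a countable abelian group $A$ is hyperlinear. Given such an $E$ with cocycle $\sigma \in \rmZ^2(Q,A)$, pull it back along $\pi$ to get the fibered product $\widetilde E = E \times_Q G$. This is a central extension of $G$ by $A$, since it is the pullback of the cocycle $\sigma\circ(\pi\times\pi)$. Because $G$ is cocycle-hyperlinear, $\widetilde E$ is hyperlinear by Thom's criterion \cite[Lemma 3.4]{Thom10}. Now $E \cong \widetilde E / (\{e\}\times N)$, and $\{e\}\times N$ is central in $\widetilde E$ because $N$ is central in $G$. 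This reduces the problem to showing that the quotient of a hyperlinear group by a central subgroup is hyperlinear.

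That reduction is the real obstacle. To handle it, I would show $L(\widetilde E/N)$ is Connes-embeddable by approximating its trace through the central subalgebra $L(N)$. The trace $\delta_{e}$ on $\widetilde E/N$ pulls back to the positive definite function $\mathbbm{1}_N$ on $\widetilde E$. Averaging the central characters $\chi \in \widehat N$ over shrinking neighbourhoods of the trivial character expresses $\mathbbm 1_N$ as a weak$^*$ limit of normal states on $L(\widetilde E)$ given by positive $L^1(\widehat N)$-densities in the center. Each such state yields a Connes-embeddable tracial algebra, namely a weighted corner. Passing to an ultraproduct then embeds $L(\widetilde E/N)$, using that Connes-embeddability is stable under ultraproducts and under the limits in \cite[Proposition 4.1]{OzawaSurvey}.
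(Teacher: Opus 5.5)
Your argument is correct, but it is organized differently from the paper's. Apply your reduction to the extension $D\times_{\omega_0}Q$, where $D$ is the countable subgroup generated by the values of $\omega_0$. Then your pullback $\widetilde E$ is exactly the group $E=D\times_\omega G$ that the paper builds, and both arguments get its hyperlinearity from Thom's criterion in the same way.

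The paper does not quotient by $N$. It regards $E$ directly as a central extension of $Q$ by $D\times N$ with cocycle $\alpha(q,r)=(\omega_0(q,r),n(q,r))$. It then applies Thom's criterion once more with the character $\kappa(d,n)=d$, which is trivial on $N$ and satisfies $\kappa\circ\alpha=\omega_0$. So your ``real obstacle'' is already handled by the direction of Thom's criterion you used in your reduction (a hyperlinear central extension has every character twist Connes-embeddable). Apply it to $1\to N\to\widetilde E\to\widetilde E/N\to 1$ with the trivial character.

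Your hands-on proof of that step is also correct. You use the central projections $p_U\in L(N)\cong L^\infty(\widehat N)$ attached to neighbourhoods $U$ of the trivial character, note that the normalized corner traces on $p_UL(\widetilde E)p_U$ converge pointwise to $\mathbbm 1_N$, and pass to an ultraproduct. Two small points: use the normalized indicators $\mu(U)^{-1}\mathbbm 1_U$ as densities so that the states really are corner traces, and note that the limit is a trace on the group algebra, not a normal state on $L(\widetilde E)$. This is essentially the continuity argument behind the ``for all characters'' form of Thom's lemma. It makes the proof self-contained at the cost of length.

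It also shows that your abandoned first attempt was fine. The same corner argument runs directly in $L_\omega(G)$, where the $u_n$ ($n\in N$) are central. In $\prod_\omega p_{U_k}L_\omega(G)p_{U_k}$ the elements $V_g=(u_gp_{U_k})_k$ satisfy $V_gV_h=\omega(g,h)V_{gh}$ and $\tau(V_g)=\mathbbm 1_N(g)$. Hence $V_n=1$ for $n\in N$, and $q\mapsto V_{s(q)}$ gives a trace-preserving embedding of $L_{\omega_0}(Q)$. This avoids the pullback and Thom's criterion entirely and is the natural extension of the proof of Proposition~\ref{Prop: quotients by finite subgroups}, with $p$ replaced by $p_U$. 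Your worry about non-normality does not arise, since only the approximating states need to be normal.
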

\begin{proof}
    Let $Q = G/N$ as before and let $\pi: G \rightarrow Q$ be the quotient map. Let $\omega_0 \in \rmH^2(Q,\T)$ and let $D < \T$ be the countable subgroup generated by the image of $\omega_0$. Consider the $D$-valued cocycle $\omega(g,h) = \omega_0(\pi(g),\pi(h))$ for all $g,h \in G$. Consider the group $E = D \times_\omega G$ given by $(d,g)\cdot(d',g') = (dd'\omega(g,g'), gg')$. One checkes that associativity follows from the cocycle relations. The projection map $E \rightarrow G$ gives a central extension as the kernel is precisely $D$ which clearly is in the center of $E$. Now let $\chi \in \widehat{D}$ be a character and notice that for the coycles $\omega_\chi = \chi \circ \omega$, $L_{\omega_\chi}(G)$ is Connes-embeddable as $G$ is cocycle-hyperlinear. By \cite[Lemma 3.4]{Thom10}, we get that $E$ is hyperlinear. 

    Consider the surjective group homomorphism $\rho: E \rightarrow Q$ given by $\rho(d,g) = \pi(g)$.  Notice that $\ker(\rho) = \{(d,g) \in E \; | \; g \in \ker(\pi) = N\} = D \times N$. Now let $(d,n) \in D \times N$ and notice that $(d,n)(d',g) = (dd'\omega(g,n), ng) = (d'd\omega_0(\pi(g),1),gn) = (d',g)(d,n)$. Thus $D \times N \subset Z(E)$ and we have a central extension $1 \rightarrow D \times N \rightarrow E \rightarrow Q \rightarrow 1$. Pick a normalized section $s: Q \rightarrow G$ for $\pi$ and let $n(q,r) = s(q)s(r)s(qr)^{-1}$ for all $q,r \in Q$. Consider the corresponding normalized section $s': Q \rightarrow E$ given by $s'(q) = (1, s(q))$. The cocycle associated to the extension is given by: 
    \begin{align*}
        \alpha(q,r) = (\omega_0(q,r),n(q,r)) \in D \times N
    \end{align*}
    Now consider the character $\kappa \in \widehat{D \times N}$ given by $\kappa(d,n) = d$ and notice that $\kappa \circ \alpha = \omega_0$. Since $E$ is hyperlinear, once again by \cite[Lemma 3.4]{Thom10}, we get that $L_{\omega}(Q)$ is hyperlinear.
\end{proof}

\begin{cor}
\label{Cor: central products}
    Let $G$ and $H$ be cocycle-hyperlinear groups and let $K < Z(G)$ and $L < Z(H)$ be central subgroups and let $\theta: K \rightarrow L$ be an isomorphism. Then the central product $(G \times H ) / \{(k,\theta(k)^{-1}) \; | \; k \in K\}$ is cocycle-hyperlinear.   
\end{cor}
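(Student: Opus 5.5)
The plan is to combine Proposition \ref{Prop: direct product of SH is in SH} with Proposition \ref{Prop: quotient by central subgroup}. First, since $G$ and $H$ are cocycle-hyperlinear, Proposition \ref{Prop: direct product of SH is in SH} shows that $G \times H$ is cocycle-hyperlinear. It then suffices to exhibit the central product as a quotient of $G \times H$ by a central normal subgroup.

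Set $N = \{(k,\theta(k)^{-1}) \; | \; k \in K\} \subset G \times H$. Because $\theta$ is a homomorphism and $K$, $L$ are abelian, $N$ is a subgroup: we have $(k,\theta(k)^{-1})(k',\theta(k')^{-1}) = (kk', \theta(kk')^{-1})$, using commutativity of $L$. Since $K < Z(G)$ and $\theta(K) = L < Z(H)$, every element of $N$ commutes with every $(g,h) \in G \times H$. Hence $N < Z(G \times H)$, and in particular $N$ is normal. Proposition \ref{Prop: quotient by central subgroup} then shows that $(G \times H)/N$ is cocycle-hyperlinear, which is the claim.

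No step here is a real obstacle. The only points to check are that $N$ is genuinely a subgroup, which uses that $\theta$ is a homomorphism into the abelian group $L$, and that it is central. Both follow directly from the hypotheses $K < Z(G)$ and $L < Z(H)$, and the requirement that $\theta$ be an isomorphism is not even needed.
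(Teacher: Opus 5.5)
Your proof is correct and is exactly the paper's argument: $G \times H$ is cocycle-hyperlinear by Proposition \ref{Prop: direct product of SH is in SH}, and Proposition \ref{Prop: quotient by central subgroup} then applies to the central subgroup $N$. Your explicit checks that $N$ is a central subgroup, which the paper leaves implicit, are right. So is your remark that $\theta$ only needs to be a homomorphism into $Z(H)$.
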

\begin{proof}
    Notice that $G \times H$ is cocycle-hyperlinear by Proposition \ref{Prop: direct product of SH is in SH}. Since the subgroup that we quotient with is central, the result follows from Proposition \ref{Prop: quotient by central subgroup}. 
\end{proof}

The following proposition is an application of the previous results on quotients and can be used to see that even central extensions of $\SL(n,\Z)$ for $n \geq 5$ are cocycle-hyperlinear. 

\begin{prop}
    \label{Prop: extensions of perfect groups}
    Let $Q$ be a perfect cocycle-hyperlinear group and let $1 \rightarrow K \rightarrow G \xrightarrow{q} Q \rightarrow 1$ be a central extension. Then $G$ is cocycle-hyperlinear.
\end{prop}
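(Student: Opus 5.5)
The plan is to reduce everything to the universal central extension $\widetilde{Q}$ of $Q$, which exists and is unique because $Q$ is perfect.

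\textbf{Step 1: $\widetilde{Q}$ is cocycle-hyperlinear.} We have a central extension
\begin{align*}
1 \rightarrow \rmH_2(Q,\Z) \rightarrow \widetilde{Q} \rightarrow Q \rightarrow 1
\end{align*}
whose kernel is a countable abelian group. Since $Q$ is cocycle-hyperlinear, $L_{\chi\circ\sigma}(Q)$ is Connes-embeddable for every character $\chi \in \widehat{\rmH_2(Q,\Z)}$. Thom's criterion \cite[Lemma 3.4]{Thom10} then gives that $\widetilde{Q}$ is hyperlinear.

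Next, $\widetilde{Q}$ is perfect and $\rmH_2(\widetilde{Q},\Z)=0$. The universal coefficient isomorphism from the preliminaries then gives
\begin{align*}
\rmH^2(\widetilde{Q},\T)\cong \Hom(\rmH_2(\widetilde{Q},\Z),\T)=0.
\end{align*}
So every normalized $\omega \in \rmZ^2(\widetilde{Q},\T)$ is a coboundary. Rescaling the canonical unitaries by the corresponding $1$-cochain gives a trace-preserving isomorphism $L_\omega(\widetilde{Q})\cong L(\widetilde{Q})$, and the latter is Connes-embeddable. Hence $\widetilde{Q}$ is cocycle-hyperlinear.

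\textbf{Step 2: the commutator subgroup $G'=[G,G]$.} Since $Q$ is perfect, $q(G')=[Q,Q]=Q$, so $G'\rightarrow Q$ is surjective and $G'$ is a central extension of $Q$ by $K\cap G'$. By the universal property there is a homomorphism $\phi:\widetilde{Q}\rightarrow G$ over $Q$. Its image is perfect, because $\widetilde{Q}$ is, and so lies in $G'$. Since $\phi(\widetilde{Q})$ surjects onto $Q$, every $g \in G'$ has the form $\phi(x)k$ with $k \in K$. Hence $G' = [G',G']\subseteq[\phi(\widetilde{Q})K,\phi(\widetilde{Q})K]=\phi(\widetilde{Q})$, using that $K$ is central. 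Thus $G'=\phi(\widetilde{Q})\cong\widetilde{Q}/\ker\phi$. Here $\ker\phi$ lies in $\rmH_2(Q,\Z)$, which is central in $\widetilde{Q}$. By Proposition \ref{Prop: quotient by central subgroup} and Step 1, $G'$ is cocycle-hyperlinear.

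\textbf{Step 3: assembling $G$.} Since $q(G')=Q$, we have $G=K G'$. Consider the map
\begin{align*}
K\times G'\rightarrow G,\qquad (k,x)\mapsto kx.
\end{align*}
It is a surjective homomorphism because $K$ is central, and its kernel $\{(k,k^{-1}) \; | \; k\in K\cap G'\}$ is central in $K\times G'$. So $G$ is a central product in the sense of Corollary \ref{Cor: central products}.

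The factor $K$ is abelian, hence amenable. Amenable groups are treeable, since free actions of amenable groups generate hyperfinite relations, which are treeable. So $K$ is cocycle-hyperlinear by Proposition \ref{Prop: treeable groups are SH}. Then $K\times G'$ is cocycle-hyperlinear by Proposition \ref{Prop: direct product of SH is in SH}, and its central quotient $G$ is cocycle-hyperlinear by Proposition \ref{Prop: quotient by central subgroup}.

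The main point requiring care is Step 1: the vanishing of $\rmH^2(\widetilde{Q},\T)$ is what lets us pass from mere hyperlinearity of $\widetilde{Q}$, which Thom's criterion provides, to full cocycle-hyperlinearity. The identification $G'=\phi(\widetilde{Q})$ in Step 2 is routine but must be checked carefully.
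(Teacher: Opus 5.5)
Your proof is correct and follows the paper's route. The paper also obtains cocycle-hyperlinearity of $\widetilde{Q}$ from Thom's criterion together with $\mathrm{H}^2(\widetilde{Q},\mathbb{T})=0$. It then applies Proposition~\ref{Prop: quotient by central subgroup} only once, to the surjection $\widetilde{Q}\times K\to G$, $(u,k)\mapsto\phi(u)k$, whose kernel lies in $Z(\widetilde{Q})\times K$; your second central quotient and the identification of $\phi(\widetilde{Q})$ with $G'$ are therefore not needed. If you keep that identification, fix the slip in Step 2: $G'=[G',G']$ is not yet known at that point, but since every $g\in G$ (not just $g\in G'$) has the form $\phi(x)k$, you get $G'=[G,G]=[\phi(\widetilde{Q}),\phi(\widetilde{Q})]\subseteq\phi(\widetilde{Q})$ directly.
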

\begin{proof}
    Since $Q$ is perfect, it admits a universal central extension:
    \begin{align*}
        1 \rightarrow \rmH_2(Q,\Z) \rightarrow \widetilde{Q} \xrightarrow{p} Q \rightarrow 1
    \end{align*}
    In this situation it turns out that $\rmH_1(\widetilde{Q},\Z) = \rmH_2(\widetilde{Q}, \Z) = \{0\}$ (for example by \cite[Proposition 2.3]{Bridson20}). Since $Q$ is cocycle-hyperlinear, by Thom's criterion \cite[Lemma 3.4]{Thom10}, $\widetilde{Q}$ is hyperlinear. Notice that applying the universal coefficient theorem to $\widetilde{Q}$, since $\rmH_1$ and $\rmH_2$ vanish, we get that $\rmH^2(\widetilde{Q},\T) = \{0\}$. Thus every twisted group von Neumann algebra of $\widetilde{Q}$ is isomorphic to $L(\widetilde{Q})$ and hence we get that $\widetilde{Q}$ is cocycle-hyperlinear. By universality, we have a unique group homomorphism $\phi: \widetilde{Q} \rightarrow G$ such that $q \circ \phi = p$. Define the map $\theta: \widetilde{Q} \times K \rightarrow G$ given by $\theta(u,k) = \phi(u)k$. Notice that since $K < Z(G)$, the map $\theta$ is actually a group homomorphism. Moroever it is surjective: indeed for $g \in G$, pick $u \in \widetilde{Q}$ such that $q(g) = p(u)$ in $Q$; then $q(g\phi(u)^{-1}) = q(g)p(u)^{-1} = e$ and hence $g\phi(u)^{-1} \in K$. Then $\theta(u, g \phi(u)^{-1}) = \phi(u)g\phi(u)^{-1} = g$ as required. 

    We claim now that $\ker(\theta) < Z(\widetilde{Q} \times K) = Z(\widetilde{Q}) \times K$. Indeed notice that if $\phi(u)k = e$ for $u \in \widetilde{Q}$ and $k \in K$, then in particular $\phi(u) \in K$ and $q(\phi(u)) = p(u) = e$. Thus $u \in \ker(p) \leq Z(\widetilde{Q})$ as required. Now $K$ is abelian, hence cocycle-hyperlinear and by Proposition \ref{Prop: direct product of SH is in SH}, $\widetilde{Q} \times K$ is  cocycle-hyperlinear. By the first isomorphism theorem applied to $\theta: \widetilde{Q} \times K \rightarrow G$, we have that $G \cong \widetilde{Q} \times K/ \ker(\phi)$. Since $\ker(\phi)$ is central, by Proposition \ref{Prop: quotient by central subgroup}, $G$ is cocycle-hyperlinear. 
\end{proof}

Recall that a subgroup $H \leq G$ is called \textit{co-amenable} if the action $G \actson G/H$ admits an invariant mean. If $H$ is normal, then co-amenability is equivalent to $G/H$ being amenable. The main result of this section is the following:

\begin{prop}
\label{Prop: coamenable overgroups}
    Let $H \leq G$ be a co-amenable subgroup. If $H$ is cocycle-hyperlinear, then $G$ is cocycle-hyperlinear. In particular if $G$ has a finite index cocycle-hyperlinear subgroup, then $G$ is cocycle-hyperlinear.
\end{prop}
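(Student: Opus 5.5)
Fix a normalized cocycle $\omega \in \rmZ^2(G,\T)$. The plan is to reduce the Connes-embeddability of $L_\omega(G)$ to that of $L_{\omega|_H}(H)$, which holds by hypothesis since $\omega|_{H\times H}$ is a normalized cocycle on $H$. The natural tool is the standard fact that Connes-embeddability passes from a subalgebra to the ambient algebra under co-amenability. Concretely, set $M = L_\omega(G)$ and $N = L_{\omega|_H}(H) \subseteq M$, the von Neumann subalgebra generated by $\{u_h\}_{h\in H}$. I would show that $N \subseteq M$ is co-amenable (amenable relative inclusion in the sense of Popa, equivalently $M$ is amenable relative to $N$ inside $M$), and then invoke the known permanence result that if $N \subseteq M$ is co-amenable and $N$ is Connes-embeddable, then so is $M$. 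This permanence result is available, for example, in Ozawa's survey or via Popa's characterization: relative amenability gives a $M$-central state on $\langle M, e_N\rangle$, which yields $M$-approximate embeddings through matrix amplifications of $N$.

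To verify co-amenability at the level of the twisted algebras, I would mimic the untwisted argument. The Jones basic construction $\langle M, e_N\rangle$ contains the projections $p_{gH} = u_g e_N u_g^*$ for $g \in G$. The twist only introduces scalars, so $u_g e_N u_g^*$ depends only on the coset $gH$: for $h\in H$ we have $u_{gh} = \overline{\omega(g,h)}u_g u_h$, and $u_h$ commutes with $e_N$. These projections are mutually orthogonal and sum to $1$. They generate a copy of $\ell^\infty(G/H)$ on which $\mathrm{Ad}(u_g)$ acts by left translation. The $G$-invariant mean $m$ on $\ell^\infty(G/H)$ then has to be extended to an $M$-central state on $\langle M, e_N\rangle$. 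I would do this by composing $m$ with the operator valued weight $\langle M,e_N\rangle \to M$ restricted to corners, that is, $\varphi(x) = m\big(gH \mapsto \tau(\widehat{E}(p_{gH} x p_{gH}))\big)$ with the appropriate normalization. Checking centrality under $u_g$ uses invariance of $m$; checking it against $N$, and hence against all of $M$, uses that $p_{gH}$ reduces the relevant conditional expectation. Equivalently, and more cleanly, I would exhibit almost-invariant unit vectors in $L^2\langle M,e_N\rangle$ from Følner-type functions $\xi_n \in \ell^2(G/H)$, taking $\eta_n = \sum_{gH} \xi_n(gH)\, u_g e_N$. This is the route I would actually follow: it gives $\|u_k \eta_n - \eta_n u_k\|_2 \to 0$, with the twist contributing only unimodular scalars that cancel because the $\eta_n$ are built from coset projections.

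The main obstacle is the last step: deducing Connes-embeddability of $M$ from that of $N$ together with relative amenability. If I cannot cite this directly, I would use a concrete fallback. Take a finite-dimensional Følner-type approximation: a finite set $F\subset G/H$ almost invariant under a given finite subset of $G$. Compress $M$ into $M_{|F|}(N)$ via $x \mapsto (E_N(u_{s(a)}^* x u_{s(b)}))_{a,b\in F}$, where $s$ is a section of cosets. This map is unital completely positive, approximately multiplicative on the given finite set, and approximately trace preserving. Since $M_{|F|}(N)$ embeds in $R^\omega$, an ultraproduct of these maps gives a trace-preserving embedding $M\hookrightarrow (R^\omega)^\omega \cong$ a Connes-embeddable algebra. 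The finite index case is then immediate, since finite index subgroups are co-amenable.
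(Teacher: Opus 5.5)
Your proposal is correct and follows a different route from the paper's. The proof is carried by your F{\o}lner fallback; the relative-amenability route is not needed.

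\textbf{The paper's route.} It never works inside $L_\omega(G)$. It forms the central extensions $E_H = D\times_\omega H\leq E_G = D\times_\omega G$, where $D\leq\T$ is the countable group generated by the values of $\omega$. It uses Thom's criterion, fed by all the twists $\chi\circ\omega|_H$ with $\chi\in\widehat{D}$, to get $E_H$ hyperlinear. It observes $E_G/E_H\cong G/H$ equivariantly and quotes the Brude--Sasyk result that hyperlinearity passes to co-amenable overgroups. It then returns through Thom's criterion with the inclusion character $D\hookrightarrow\T$.

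\textbf{Your route.} You prove the twisted analogue of the Brude--Sasyk result by hand, and it checks out.
\begin{enumerate}
\item $\Phi_F(x) = (E_N(u_{s(a)}^*xu_{s(b)}))_{a,b\in F}$ is the compression $V^*xV$ by the isometry $V\colon(\xi_a)_{a\in F}\mapsto\sum_a u_{s(a)}\xi_a$ of $L^2(N)^{\oplus F}$ into $L^2(M)$. So it is unital completely positive, and in fact exactly trace-preserving.
\item $\Phi_F(u_g)$ is a partial permutation matrix. Its nonzero entries sit at positions $(gb,b)$ with $b,gb\in F$ and are unitaries of $N$.
\item $\Phi_F(u_g)\Phi_F(u_k)$ agrees with $\omega(g,k)\Phi_F(u_{gk})$ on every column $b$ with $kb\in F$. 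Hence the $2$-norm defect is at most $(|F\setminus k^{-1}F|/|F|)^{1/2}$.
\end{enumerate}
Along F{\o}lner sets for $G\actson G/H$ this gives a trace-preserving embedding $L_\omega(G)\hookrightarrow\prod_{\mathcal U}M_{|F_n|}(N)$. A separable subalgebra of an ultraproduct of Connes-embeddable algebras is Connes-embeddable by a diagonal argument; this is what ``$(R^\omega)^\omega\cong$ a Connes-embeddable algebra'' should say.

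\textbf{What each route buys.} Yours is self-contained and avoids both Thom's lemma and the group-level black box. It is also finer: for each single $\omega$ it shows that Connes-embeddability of $L_{\omega|_H}(H)$ alone implies that of $L_\omega(G)$, whereas the paper needs all character twists of $\omega|_H$. The paper's route buys brevity given its citations, and it matches its general strategy of turning every question into hyperlinearity of central extensions.

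\textbf{Two points to fix or drop in your primary route.}
\begin{enumerate}
\item As written, $\eta_n = \sum_{gH}\xi_n(gH)\,u_ge_N$ is not almost central. Already for $H=\{e\}$ and $\omega=1$ it is the rank-one operator $|\xi_n\rangle\langle\delta_e|$. Then $u_k\eta_n = |\lambda_k\xi_n\rangle\langle\delta_e|$ and $\eta_nu_k = |\xi_n\rangle\langle\delta_{k^{-1}}|$ are at Hilbert--Schmidt distance $\sqrt{2}\,\|\xi_n\|$ for $k\neq e$. The correct vectors are $\sum_{a\in G/H}\xi_n(a)p_a$ with $p_a = u_{s(a)}e_Nu_{s(a)}^*$. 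For these, $\|u_k\eta_nu_k^*-\eta_n\|_{2} = \|k\cdot\xi_n-\xi_n\|_2$, and the twist disappears completely.
\item The permanence statement that relative amenability plus Connes-embeddability of $N$ gives Connes-embeddability of $M$ is true. For instance, the Connes--Namioka trick reduces it to finite-trace corners of $\langle M,e_N\rangle$, and these are Connes-embeddable when $N$ is. But you would have to cite it precisely or prove it.
\end{enumerate}
Since the F{\o}lner compression argument avoids both issues, present that as the proof.
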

\begin{proof}
    Let $\omega \in \rmH^2(G,\T)$ and let $D \leq \T$ be the countable subgroup generated by the image of $\omega$. Exactly as in the proof of Proposition \ref{Prop: quotient by central subgroup}, consider the central extension $E_G = D \times_\omega G$ where multiplication is given by $(d,g)(d',g') = (d'd \omega(g,g'), gg')$. Let $E_H$ be the subgroup $D \times_\omega H$ and consider the central extension $1 \rightarrow D \rightarrow E_H \rightarrow H \rightarrow 1$. Since $H$ is cocycle-hyperlinear, by \cite[Lemma 3.4]{Thom10}, $E_H$ is hyperlinear. Now notice that the map $(d,g)E_H \mapsto gH$ gives a $E_G$ equivariant bijection $E_G/E_H \rightarrow G/H$. Thus $E_H$ is co-amenable in $E_G$. Since $E_H$ is hyperlinear, by \cite[Proposition 5.1]{Brude_Sasyk}, $E_G$ is hyperlinear. Consider now the central extension $1 \rightarrow D \rightarrow E_G \rightarrow G \rightarrow 1$ and notice that since the inclusion map $\iota : D \rightarrow \T$ is a character, we get that $\iota \circ \omega = \omega$ and once again by Thom's criterion \cite[Lemma 3.4]{Thom10} we get that $L_{\omega}(G)$ is Connes-embeddable, as required. The last sentence in the proposition follows as every finite index subgroup is co-amenable.  
\end{proof}

\begin{cor}
\label{Cor: SH by amenable groups}
    Let $1 \rightarrow K \rightarrow G \rightarrow Q \rightarrow 1$ be an exact sequence of groups. If $K$ is cocycle-hyperlinear and $Q$ is amenable then $G$ is cocycle-hyperlinear.  
\end{cor}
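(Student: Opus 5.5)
The plan is to reduce the statement directly to Proposition \ref{Prop: coamenable overgroups}. Identify $K$ with its image in $G$; it is a normal subgroup with $G/K \cong Q$. By hypothesis $K$ is cocycle-hyperlinear, so it suffices to check that $K$ is co-amenable in $G$.

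For a normal subgroup, co-amenability of $K$ in $G$ is equivalent to amenability of the quotient $G/K$, as recalled just before Proposition \ref{Prop: coamenable overgroups}. To see this concretely: the action $G \actson G/K$ factors through the left translation action of $Q = G/K$ on itself. Since $Q$ is amenable, there is a left-invariant mean $m$ on $\ell^\infty(Q)$. Pulling $m$ back along the quotient map gives a $G$-invariant mean on $\ell^\infty(G/K)$, because the $G$-action on $\ell^\infty(G/K) = \ell^\infty(Q)$ is precisely translation by the image of $G$ in $Q$. Hence $K$ is co-amenable in $G$.

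Proposition \ref{Prop: coamenable overgroups} then gives that $G$ is cocycle-hyperlinear. There is no real obstacle here. The only point worth making explicit is the identification of $G/K$ with $Q$ as $G$-sets, and that is immediate from exactness of the sequence. All of the analytic content is already contained in Proposition \ref{Prop: coamenable overgroups}: Thom's criterion combined with the Brude--Sasyk result that hyperlinearity passes to co-amenable overgroups.
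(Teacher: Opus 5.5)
Your proposal is correct and follows essentially the same route as the paper. Both arguments note that $K$ is normal in $G$ with amenable quotient $G/K \cong Q$, so $K$ is co-amenable, and then invoke Proposition \ref{Prop: coamenable overgroups}. Your construction of the $G$-invariant mean on $\ell^\infty(G/K)$ from an invariant mean on $Q$ simply spells out the equivalence the paper cites.
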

\begin{proof}
    Since $Q \cong G/K$ is amenable, thus $K \leq G$ is co-amenable and hence this follows from Proposition \ref{Prop: coamenable overgroups}. 
\end{proof}

Notice that in particular, Corollary \ref{Cor: SH by amenable groups} says that treeable-by-amenable groups are cocycle-hyperlinear. We have already obtained in \ref{Cor: amenable by treeable groups are SH} that amenable-by-treeable groups are cocycle-hyperlinear.

\subsection{Amalgamated free products and HNN extensions}
The proof of the following proposition is exactly similar to the proof of Lemma \ref{Lemma: free products and actions} and hence we only give a sketch. 

\begin{lem}
\label{Lemma: amalgam twisted group von neumann algebras}
    Let $H$ and $K$ be countable groups with a common subgroup $L$ and let $G = H \ast_L K$. Let $\omega \in \rmH^2(G,\T)$. Then: 
    \begin{align*}
        L_{\omega}(G) \cong L_{\omega}(H) \ast_{L_{\omega}(L)} L_{\omega}(K)
    \end{align*}
\end{lem}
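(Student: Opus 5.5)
The plan is to run the argument of Lemma \ref{Lemma: free products and actions} with groups in place of equivalence relations. Set $M = L_\omega(G)$ with canonical unitaries $(u_g)_{g\in G}$ and trace $\tau$. Let $M_H$, $M_K$, $B$ be the von Neumann subalgebras generated by $\{u_h\}_{h \in H}$, $\{u_k\}_{k\in K}$ and $\{u_l\}_{l \in L}$. Since $\omega$ restricts to cocycles on $H$, $K$, $L$ and $\tau(u_g)=\delta_{g,e}$, these subalgebras are trace-preservingly isomorphic to $L_{\omega}(H)$, $L_\omega(K)$ and $L_\omega(L)$. Let $E: M \to B$ be the trace-preserving conditional expectation. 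On the dense subalgebra it is given by $E(\sum_g c_g u_g) = \sum_{g \in L} c_g u_g$, and it restricts to the corresponding expectations on $M_H$ and $M_K$. It then suffices to show two things: $M_H$ and $M_K$ generate $M$, and they are freely independent with amalgamation over $B$ with respect to $E$. The amalgamated free product is the unique von Neumann algebra with these properties, so the isomorphism follows.

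Generation is immediate. Every $g \in G$ is a product $g = g_1\cdots g_n$ with $g_i \in H\cup K$, and
\begin{align*}
u_{g_1}\cdots u_{g_n} = c\, u_g \quad \text{for some } c\in\T,
\end{align*}
so each $u_g$ lies in $W^*(M_H,M_K)$.

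For freeness, let $D_H$ and $D_K$ be the $\ast$-algebras spanned by $\{u_h\}_{h \in H}$ and $\{u_k\}_{k \in K}$. An element of $D_H \cap \ker E$ is a finite combination $\sum c_h u_h$ with $h \in H\setminus L$, and similarly for $K$. Take an alternating word $x_1\cdots x_n$ with $x_j \in \ker E$ in $D_H$ or $D_K$, alternating between the two. By multilinearity it reduces to $u_{g_1}\cdots u_{g_n}$, where the $g_j$ lie alternately in $H\setminus L$ and $K\setminus L$. This product equals a scalar times $u_{g_1\cdots g_n}$. By the normal form theorem for amalgamated free products, $g_1\cdots g_n \notin L$, and in fact $g_1 \cdots g_n \neq e$ for $n\ge 1$. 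Hence $E(x_1\cdots x_n)=0$.

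The passage from $D_H$, $D_K$ to all of $M_H$, $M_K$ is the Kaplansky density and $L^2$-continuity argument from Lemma \ref{Lemma: free products and actions}, verbatim. Approximate each $x_j \in M_{i_j}\cap\ker E$ by uniformly bounded $z_j^m$ in the dense algebra, replace them by $z_j^m - E(z_j^m)$, use the telescoping estimate in $\|\cdot\|_2$, and conclude by $L^2$-contractivity of $E$.

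The only substantive input is the normal form step, and the cocycle causes no trouble there because it only contributes unimodular scalars. I therefore expect no real obstacle beyond checking carefully that $E$ is the coefficient restriction to $L$, which holds since $\tau(u_g^* u_{g'}) = \delta_{g,g'}$ for the twisted unitaries as well.
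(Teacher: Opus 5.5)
Your proposal is correct and follows essentially the same route as the paper. Generation comes from $u_{g_1}\cdots u_{g_n}=c\,u_g$, the expectation $E$ vanishes on alternating products of twisted group unitaries by the normal form theorem, and the passage to the full von Neumann algebras is the Kaplansky density and $\|\cdot\|_2$-contractivity argument reused from Lemma \ref{Lemma: free products and actions}.
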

\begin{proof}
    For convenience of notation let $M = L_{\omega}(G)$, $P = L_{\omega}(H)$, $Q = L_\omega(K)$, $B = L_{\omega}(L)$ and $E: M \rightarrow B$ be the unique trace preserving conditional expectation. By abusing notation we denote the restrictions $E|_{P}: P \rightarrow B$ and $E|_{Q}: Q \rightarrow B$ by $E$. Let $\{u_{g} \; | \; g \in G\}$ denote the canonical unitaries in $M$ and one has that $E(u_g) = \delta_{g,L}u_g$ where $\delta_{g,L}$ is 1 if and only if $g \in L$, otherwise 0. Now since $G$ is the amalgamated free product, every $g \in G$ is of the form $g_1g_2...g_n$ where $g_i's$ are in $H$ and $K$. By the twisted multiplication formula, one has $u_{g_1}u_{g_2}...u_{g_n} = c u_{g}$ for a scalar $c \in \T$. Since $M$ is generated by these canonical unitaries, we have that $M = W^*(P \bigcup Q)$.

    Let $P_0$ and $Q_0$ be the twisted group algebras generated by the corresponding group unitaries. Notice that $P_0 \cap \ker(E) = \Span \{u_g \; | \; g \in H \backslash L\}$ and $Q_0 \cap \ker(E) = \Span \{u_g \; | \; g \in K \backslash L\}$. Let $g = h_1\rmK_1h_2\rmK_2...h_n\rmK_n$ where $h_i \in H \backslash L$, $\rmK_i \in K \backslash L$, then by the Bass-Serre normal form theorem, $g$ is reduced and does not belong to $L$. Moreover $u_{h_1}u_{\rmK_1}...u_{h_n}u_{\rmK_n} = cu_{g}$  for a scalar $c \in \T$. Therefore $E(u_{h_1}u_{\rmK_1}...u_{h_n}u_{\rmK_n}) = 0$ from the conditional expectation stated in the previous paragraph. We can now pass from this algebraic freeness to freeness of $P$ and $Q$ over $B$ by using the exact same argument as in Lemma \ref{Lemma: free products and actions} using Kaplansky density theorem and the contractiveness of the conditional expectation.  
\end{proof}

\begin{prop}
\label{Prop: amalagamted free products over amenable groups}
    Let $G = H \ast_L K$ be an amalgamated free product. If $H$ and $K$ are cocycle-hyperlinear and $L$ is amenable, then $G$ is cocycle-hyperlinear. 
\end{prop}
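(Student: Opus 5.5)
Fix a normalized cocycle $\omega \in \rmZ^2(G,\T)$; I will show that $L_\omega(G)$ is Connes-embeddable. The first step is Lemma \ref{Lemma: amalgam twisted group von neumann algebras}, which gives a trace-preserving isomorphism
\begin{align*}
    L_\omega(G) \cong L_\omega(H) \ast_{L_\omega(L)} L_\omega(K),
\end{align*}
where each factor is the twisted group von Neumann algebra of the restriction of $\omega$ to the corresponding subgroup. The restriction of $\omega$ to $H$ is a normalized $2$-cocycle on $H$, so $L_\omega(H)$ is Connes-embeddable because $H$ is cocycle-hyperlinear; likewise $L_\omega(K)$ is Connes-embeddable.

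Next I need the amalgam $B = L_\omega(L)$ to be hyperfinite. Since $L$ is amenable, every twisted group von Neumann algebra of $L$ is injective. The cleanest argument is the standard one: a F{\o}lner sequence for $L$ yields an $L$-invariant state on $\ell^\infty(L)$, and averaging through it produces a conditional expectation $B(\ell^2(L)) \to L_\omega(L)'$. Here I use the twisted right regular representation, whose commutant is the twisted left algebra. Alternatively, $L_\omega(L)$ embeds in $L(D \times_\omega L)$ for a countable $D \leq \T$ via Thom's correspondence; the central extension $D \times_\omega L$ is amenable, so $L_\omega(L)$ is a corner cut out by a central projection of an injective algebra. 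Either way, Connes' theorem then gives that $B$ is hyperfinite. This step was already used implicitly in the proof of Theorem \ref{Thm: action with treeable eq relation and amenable stabilizer}.

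Finally, I apply the permanence result of Brown--Dykema--Jung \cite[Corollary 4.5]{Brown_Dykema_Jung}: an amalgamated free product of Connes-embeddable tracial von Neumann algebras over a hyperfinite subalgebra, with respect to the trace-preserving conditional expectations, is Connes-embeddable. This requires the expectations in the amalgam to be the trace-preserving ones onto $B$. Lemma \ref{Lemma: amalgam twisted group von neumann algebras} provides exactly this, since $E(u_g) = \delta_{g,L}u_g$ is the trace-preserving expectation. Combining the three steps gives that $L_\omega(G)$ is Connes-embeddable for every $\omega$.

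The main obstacle is that Lemma \ref{Lemma: amalgam twisted group von neumann algebras} was only sketched. One must check that the identification is trace preserving and that freeness with amalgamation holds with respect to those expectations, so that \cite{Brown_Dykema_Jung} genuinely applies. A smaller point is the injectivity of twisted algebras of amenable groups. If needed, I would handle it via the central extension argument above using \cite[Lemma 3.4]{Thom10}, which avoids any new computation. The countable version $\ast_L G_n$ then follows by writing it as an increasing union of finite amalgams and invoking \cite[Proposition 4.1(iii)]{OzawaSurvey}, as in Lemma \ref{Lemma: directed union}.
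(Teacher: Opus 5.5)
Your proposal is correct and follows the paper's argument exactly: Lemma \ref{Lemma: amalgam twisted group von neumann algebras}, then Connes-embeddability of $L_\omega(H)$ and $L_\omega(K)$, then hyperfiniteness of $L_\omega(L)$, then \cite[Corollary 4.5]{Brown_Dykema_Jung}. The paper only asserts the hyperfiniteness, and your invariant-mean averaging over the twisted regular representation is a valid justification for it.

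Drop the central-extension alternative, though. When the subgroup $D\leq\T$ generated by the values of $\omega$ is infinite, the inclusion character is not an atom of Haar measure on $\widehat{D}$. So $L_\omega(L)$ is not cut out of $L(D\times_\omega L)$ by a central projection. Moreover, \cite[Lemma 3.4]{Thom10} only yields Connes-embeddability of $L_\omega(L)$, not the injectivity that \cite{Brown_Dykema_Jung} requires. The salvageable form of that idea is different: $(d,g)\mapsto d\,u_g$ is a unitary representation of the amenable group $D\times_\omega L$ that generates $L_\omega(L)$, and any unitary representation of an amenable group generates an injective von Neumann algebra.
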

\begin{proof}
   Let $\omega$ be a 2-cocycle on $G$. By Lemma \ref{Lemma: amalgam twisted group von neumann algebras}, we have $L_{\omega}(G) \cong L_{\omega}(H) \ast_{L_{\omega}(L)} L_{\omega}(K)$. Since $H$ and $K$ are cocycle-hyperlinear, the two twisted von Neumann algebras are Connes-embeddable. Since $L$ is amenable, $L_{\omega}(L)$ is hyperfinite. The result now follows from \cite[Corollary 4.5]{Brown_Dykema_Jung}. 
\end{proof}

\begin{cor}
    For $n \in \N$, let $G_n$ be a cocycle-hyperlinear group and let $L$ be a common amenable subgroup. Then the countable amalgamated free product $G = \ast_{L} G_{n}$ is cocycle-hyperlinear. 
\end{cor}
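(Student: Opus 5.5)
The plan is to reduce to the two-factor case, Proposition \ref{Prop: amalagamted free products over amenable groups}, by an induction, and then to pass to the countable amalgam by a directed union argument. For each $k \in \N$, let $\Gamma_k = G_1 \ast_L G_2 \ast_L \cdots \ast_L G_k$, the amalgamated free product of the first $k$ groups over the common subgroup $L$.

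The first step is to show by induction on $k$ that every $\Gamma_k$ is cocycle-hyperlinear. The base case $\Gamma_1 = G_1$ holds by hypothesis. For the inductive step, I will use the standard associativity of amalgamated free products, $\Gamma_{k+1} \cong \Gamma_k \ast_L G_{k+1}$. Here $L$ sits in $\Gamma_k$ via any of the factors, and these embeddings agree because the amalgamation identifies them. By the Bass--Serre normal form theorem, $L$ embeds injectively in $\Gamma_k$, so $\Gamma_k \ast_L G_{k+1}$ is a genuine amalgamated free product of two groups over the common amenable subgroup $L$. Since $\Gamma_k$ is cocycle-hyperlinear by the inductive hypothesis and $G_{k+1}$ is cocycle-hyperlinear by assumption, Proposition \ref{Prop: amalagamted free products over amenable groups} shows that $\Gamma_{k+1}$ is cocycle-hyperlinear.

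The second step is to pass to $G$. Again by the normal form theorem for amalgams, the natural maps $\Gamma_k \to \Gamma_{k+1} \to G$ are injective. Hence $G = \bigcup_k \Gamma_k$ is an increasing union of cocycle-hyperlinear subgroups. Lemma \ref{Lemma: directed union} then gives that $G$ is cocycle-hyperlinear. The free product statement is the special case $L = \{e\}$. Note that every cocycle on $G$ restricts to cocycles on the $\Gamma_k$, so no compatibility issue arises; this is already handled inside Lemma \ref{Lemma: directed union}.

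The only point requiring any care is the group-theoretic bookkeeping: the associativity isomorphism for iterated amalgams, and the injectivity of $\Gamma_k \hookrightarrow \Gamma_{k+1} \hookrightarrow G$. Both follow from the universal property together with the reduced-word normal form. I expect no analytic obstacle, since all the operator-algebraic input, namely \cite[Corollary 4.5]{Brown_Dykema_Jung} and \cite[Proposition 4.1(iii)]{OzawaSurvey}, is already packaged into the two cited results.
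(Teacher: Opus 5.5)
Your proposal is correct and follows essentially the same route as the paper. The paper also gets the finite amalgams $G_1 \ast_L \cdots \ast_L G_k$ from Proposition~\ref{Prop: amalagamted free products over amenable groups} and then passes to the countable amalgam with Lemma~\ref{Lemma: directed union}. Your induction via associativity, and the injectivity of $\Gamma_k \hookrightarrow \Gamma_{k+1} \hookrightarrow G$ from the normal form theorem, simply spell out bookkeeping that the paper's two-line proof leaves implicit.
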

\begin{proof}
    Every finite product is cocycle-hyperlinear by Proposition \ref{Prop: amalagamted free products over amenable groups}. The countable directed union is cocycle-hyperlinear by Lemma $\ref{Lemma: directed union}$.  
\end{proof}

We recall the definition of an HNN extension. Let $H \leq G$, let $\nu: H \rightarrow G$ be an injective homomorphism and let $K = \nu(H)$. Then the HNN extension is defined as: 
\begin{align*}
    \Gamma \coloneqq \HNN(G,H,\nu) = \langle G,t \; | \; tht^{-1} = \nu(h) \text{ for all } h\in H \rangle 
\end{align*}
Now let $\omega \in \rmZ^2(\Gamma, \T)$ and for convenience let us denote $M = L_{\omega|_G}(G)$, $P = L_{\omega|_H}(H)$ and $Q = L_{\omega|_{K}}(K)$. 
Denote the group unitaries in $L_\omega(\Gamma)$ by $\{u_{g} \; | \; g \in \Gamma\}$. Notice that for the unitary $u_t$ associated to the symbol $t$, the conjugation map $\Theta_{\omega} \coloneqq \Ad(u_t)$ maps $P$ trace-preservingly and isomorphically to $Q$. One calculates that on generators, the cocycle identity forces the map to be given by:
\begin{align*}
    \Theta_\omega(u_h) = d(h) u_{\nu(h)} \text{ where } d(h) = \omega(t,h)\omega(th,t^{-1})\overline{\omega(t^{-1},t)}
\end{align*}
because $u_t^* = \overline{\omega(t^{-1},t)}u_{t^{-1}}$ and $tht^{-1} = \nu(h)$. In \cite{Ueda}, this construction is generalized for von Neumann algebras. In our specific setting, $\HNN(M,P,\Theta_\omega)$ is generated by $M$, and the \textit{stable unitary} $v$ that satisfies: 
\begin{align*}
    v x v^{*} = \Theta_\omega(x) \text{ for all } x \in P
\end{align*}
and the trace vanishes on reduced words. The identity on $M$ and the map $v \mapsto u_t$ satisfy the covariance condition. By an application of Britton's Lemma (exactly as in the untwisted case \cite[Corollary 3.5]{Ueda}, the canonical trace on $L_\omega(\Gamma)$ vanishes on the image of algebraically reduced words. By bounded approximation, the same trace condition holds for reduced words with coefficients in $M$, Consequence, we get a trace preserving embedding $\HNN(M,P,\Theta_\omega) \rightarrow M$. This gives us the following proposition:   

\begin{prop}
    \label{Prop: HNN extension isomorphism}
    Let $\Gamma = \HNN(G,H, \nu)$ and let $\omega \in \rmZ^2(\Gamma,\T)$. Then there is a trace-preserving isomorphism $L_{\omega}(\Gamma) \cong \HNN(M,P,\Theta_\omega)$ where $M = L_\omega(G)$, $P = L_\omega(H)$ and $\Theta_\omega$ is as above. 
\end{prop}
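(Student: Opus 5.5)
The plan is to use the universal property of Ueda's HNN extension to obtain a normal trace-preserving $*$-homomorphism $\Phi:\HNN(M,P,\Theta_\omega)\to L_\omega(\Gamma)$. Then I will check that $\Phi$ is surjective; injectivity comes for free because $\Phi$ preserves faithful traces.

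\textbf{Step 1 (the map).} Set $\Phi|_M=\id$, with $M=L_{\omega}(G)\subset L_\omega(\Gamma)$ realised as the von Neumann subalgebra generated by $\{u_g\}_{g\in G}$, and $\Phi(v)=u_t$. The covariance relation $u_t x u_t^*=\Theta_\omega(x)$ for $x\in P$ has to be checked only on the generators $u_h$. This is exactly the computation that produced the scalar $d(h)=\omega(t,h)\omega(th,t^{-1})\overline{\omega(t^{-1},t)}$, using $u_t^*=\overline{\omega(t^{-1},t)}u_{t^{-1}}$ and $tht^{-1}=\nu(h)$.

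\textbf{Step 2 (the trace condition).} This is the main obstacle. Ueda's HNN algebra carries the trace that vanishes on reduced words
\begin{align*}
x_0v^{\epsilon_1}x_1\cdots v^{\epsilon_n}x_n, \qquad n\ge 1,
\end{align*}
where $x_i\in M$ and the reducedness conditions require $E_P(x_i)=0$ or $E_Q(x_i)=0$ according to the signs $\epsilon_i,\epsilon_{i+1}$. I must show that $\tau_{L_\omega(\Gamma)}$ vanishes on the images of these words. First I treat words whose coefficients are finite linear combinations of unitaries: then the conditions read $x_i\in\Span\{u_g : g\in G\setminus H\}$ or $x_i\in\Span\{u_g : g\in G\setminus K\}$. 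In that case the image is a sum of unimodular scalars times $u_{\gamma}$, where $\gamma=g_0t^{\epsilon_1}g_1\cdots t^{\epsilon_n}g_n$ contains no pinch. By Britton's lemma $\gamma\neq e$, so the trace vanishes. For general coefficients I apply Kaplansky density and recentre, $y\mapsto y-E_P(y)$ or $y\mapsto y-E_Q(y)$, exactly as in Lemma \ref{Lemma: free products and actions}. The approximants stay uniformly bounded and converge in $L^2$, so the telescoping estimate passes the vanishing to the limit. Since the trace on $L_\omega(\Gamma)$ restricts on $M$ to the trace of $M$, $\Phi$ is trace-preserving on the $*$-algebra generated by $M$ and $v$. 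The HNN algebra is the GNS completion for its trace, so $\Phi$ extends to a normal trace-preserving embedding.

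\textbf{Step 3 (surjectivity).} The image of $\Phi$ contains $u_g$ for every $g\in G$ and contains $u_t$. Every $\gamma\in\Gamma$ is a word in $G\cup\{t,t^{-1}\}$, and by the twisted multiplication rule $u_\gamma$ is a unimodular scalar times a product of these unitaries and $u_t^*$. Hence the image is weakly dense and, being a von Neumann algebra, equals $L_\omega(\Gamma)$. So $\Phi$ is a trace-preserving isomorphism, as claimed.
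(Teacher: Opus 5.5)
Your proposal is correct and follows the same route as the paper. The common steps are: the identity on $M$ together with $v\mapsto u_t$ satisfies the covariance relation; Britton's lemma gives vanishing of the trace on algebraically reduced words; Kaplansky-type bounded approximation with recentring handles general coefficients; and the resulting trace-preserving, hence injective, map is onto. You also spell out the surjectivity step (the image contains every $u_g$ with $g\in G$ and $u_t$), which the paper leaves implicit.
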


Before looking at the main application of Proposition \ref{Prop: HNN extension isomorphism} in our setting, let us briefly recall the notion of graphs of tracial von Neumann algebras from \cite[Section 6]{Fima_Amaury}. Let $\cG$ be a connected graph with vertex set $V(\cG)$, edge set $E(\cG)$ and the involution $e \rightarrow \overline{e}$ of reversing orientation. A \textit{graph of von Neumann algebras} is a tuple 
\begin{align*}
    \left( \cG, (M_q, \tau_q)_{q \in V(\cG)}, (N_e, \tau_e)_{e \in E(\cG)}, (s_e)_{e \in E(\cG)} \right)    
\end{align*}
together with the following conditions:
\begin{enumerate}
    \item For each $q \in V(\cG)$ and $e \in E(\cG)$, $M_q$ and $N_e$ are tracial von Neumann algebras with faithful tracial states $\tau_q$ and $\tau_e$ respecitively. 
    \item For all $e \in E(\cG)$, we have $N_{\overline{e}} = N_e$ and $\tau_{\overline{e}} = \tau_e$. 
    \item For every $e \in E(\cG)$, $s_e: N_{e} \rightarrow M_{s(e)}$ is a unital normal faithful trace-preserving *-homomorphisms.
\end{enumerate}
For every $e \in E(\cG)$, we denote $r_e = s_{\overline{e}}: N_e \rightarrow M_{r(e)}$. So each edge algebra $N_e$ embeds into the two vertex algebras $M_{s(e)}$ and $M_{r(e)}$, attached to the endpoints of $e$ through the maps $s_e$ and $r_e$. We shall also denote $N^s_e = s_e(N_e)$ and $N^r_e = r_e(N_e)$. Recall from \cite[Definition 6.3]{Fima_Amaury}, the definition of the fundamental von Neumann algebra associated to the graph $\pi_1(\cG)$. This construction completely recovers amalgamated free products, as well as HNN extensions \cite[Example 6.5]{Fima_Amaury}. 

Let $(P,\tau_P)$ and $(Q, \tau_Q)$ be tracial von Neumann algebras and let $B$ be a common subalgebra. 
Let $\cG$ be the graph of two vertices $p$ and $q$ and two edges $e: p \rightarrow q$ and $\overline{e}: q \rightarrow p$. Let $P = M_p$, $Q = M_q$ and $B = N_e$ be the corresponding von Neumann algebras. In this case $\pi_1(\cG) \cong P \ast_{B} Q$ (see \cite[Example 3.3]{Fima_Amaury}). Now let $B \subset M$ be a von Neumann subalgebra and let $\Theta: B \rightarrow M$ be an injective trace preserving *-homomorphism. Now let $\cG$ be the graph consisting of one vertex $p$ and two edges $e$ and $\overline{e}$ which are loops on $p$. In this case taking $M_p = M$, $N_e = B$, $s_e = \iota$ and $r_e = \theta$, it turns out that $\pi_1(\cG) \cong \HNN(M,B, \theta)$ (see \cite[Example 3.4]{Fima_Amaury}). Now we can prove the following result about stable hyperlinearity of HNN extensions: 

\begin{cor}
    \label{Prop: HNN extensions SH}
    Let $H \leq G$ where $G$ is cocycle-hyperlinear and $H$ is amenable and let $\nu: H \rightarrow G$ be injective. Then $\HNN(G,H,\nu)$ is cocycle-hyperlinear. 
\end{cor}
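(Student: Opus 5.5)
Fix $\Gamma = \HNN(G,H,\nu)$ and a normalized cocycle $\omega \in \rmZ^2(\Gamma,\T)$. The plan is to reduce Connes-embeddability of $L_\omega(\Gamma)$ to a von Neumann algebraic HNN extension over a hyperfinite edge algebra, and then invoke the permanence result of \cite{Fima_Amaury}. By Proposition \ref{Prop: HNN extension isomorphism} we have a trace-preserving isomorphism
\begin{align*}
    L_\omega(\Gamma) \cong \HNN(M,P,\Theta_\omega), \quad M = L_{\omega|_G}(G),\ P = L_{\omega|_H}(H).
\end{align*}
Here $\Theta_\omega: P \rightarrow Q = L_{\omega|_K}(K) \subset M$ is the trace-preserving isomorphism $u_h \mapsto d(h)u_{\nu(h)}$. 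By \cite[Example 3.4]{Fima_Amaury}, this HNN extension is the fundamental von Neumann algebra $\pi_1(\cG)$ of the graph $\cG$ with one vertex $p$ and one pair of loops $e,\overline{e}$. We set $M_p = M$, $N_e = P$, $s_e = \iota$ and $r_e = \Theta_\omega$; note that $r_e$ is unital, normal, faithful and trace-preserving.

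Next we verify the hypotheses of the Connes-embeddability permanence for graphs of von Neumann algebras in \cite{Fima_Amaury}: vertex algebras Connes-embeddable and edge algebras amenable (hyperfinite). The vertex algebra $M = L_{\omega|_G}(G)$ is Connes-embeddable because $G$ is cocycle-hyperlinear and $\omega|_G$ is a normalized $2$-cocycle on $G$. The edge algebra $P = L_{\omega|_H}(H)$ is a twisted group von Neumann algebra of the amenable group $H$. It is therefore injective, hence hyperfinite. This is the same fact used for $L_\omega(L)$ in Proposition \ref{Prop: amalagamted free products over amenable groups}: the twisted left regular representation of an amenable group admits approximately invariant vectors, so $L_\omega(H)$ is injective. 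Applying the result of \cite{Fima_Amaury} then shows that $\pi_1(\cG) \cong L_\omega(\Gamma)$ is Connes-embeddable. Since $\omega$ was arbitrary, $\Gamma$ is cocycle-hyperlinear.

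An alternative route, if one prefers to rely only on \cite{Brown_Dykema_Jung}, is the standard trick of writing an HNN extension as a corner of an amalgamated free product. Concretely, $\HNN(M,P,\Theta)$ embeds in $(M \otimes M_2(\mathbb{C})) \ast_{P \oplus P} (P \otimes M_2(\mathbb{C}))$ via Ueda's construction. The amalgam $P \oplus P$ is hyperfinite, so \cite[Corollary 4.5]{Brown_Dykema_Jung} applies. I would use this only as a fallback.

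For the Baumslag--Solitar consequence, $\BS(m,n) = \HNN(\Z, m\Z, \nu)$ with $\nu(mk) = nk$. Here $\Z$ is abelian, so it is treeable and hence cocycle-hyperlinear by Proposition \ref{Prop: treeable groups are SH}, and $m\Z$ is amenable. The main obstacle I expect is not this final step but the identification in Proposition \ref{Prop: HNN extension isomorphism}. Specifically, one must check that the twist $d(h)$ makes $\Theta_\omega$ an honest trace-preserving $*$-isomorphism $P \rightarrow Q$ compatible with the twisted multiplication, so that the edge data really forms a graph of tracial von Neumann algebras. This is a cocycle computation analogous to Equation \ref{Eq for d and omega}, and I would do it first.
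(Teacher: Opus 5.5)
Your proposal is correct and follows the paper's proof: identify $L_\omega(\Gamma)$ with $\mathrm{HNN}(M,P,\Theta_\omega)$ via Proposition \ref{Prop: HNN extension isomorphism}, realize it as $\pi_1(\cG)$ for the one-vertex graph with a single loop, and apply \cite[Corollary 6.10]{Fima_Amaury} with Connes-embeddable vertex algebra $L_{\omega|_G}(G)$ and hyperfinite edge algebra $L_{\omega|_H}(H)$. Your assignment of $M$ and $P$ is the right one; the paper's write-up misassigns them.

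Two small remarks on side points. First, the twisted regular representation $\lambda_\omega$ of $H$ has no almost invariant vectors once $[\omega|_H]\neq 0$. This holds even up to phases, since an ultralimit of the phases would exhibit $\omega|_H$ as a coboundary. So hyperfiniteness of $L_{\omega|_H}(H)$ should instead be justified via the honest representation $\lambda_\omega\otimes\overline{\lambda_\omega}$ and the F{\o}lner vectors $|F|^{-1/2}\sum_{x\in F}\delta_x\otimes\overline{\delta_x}$, which yield a hypertrace. Alternatively, use nuclearity of the full $C^*$-algebra of the amenable central extension $D\times_\omega H$. Second, the obstacle you anticipate does not arise: $\Theta_\omega=\mathrm{Ad}(u_t)|_P$ is automatically a trace-preserving $*$-isomorphism onto $Q$.
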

\begin{proof}
    Let $M = L_{\omega}(\Gamma)$, $B = L_{\omega}(G)$, $\omega \in \rmH^2(\Gamma,\T)$ and $\Theta_\omega = \Ad(u_t)$ where $t$ is the symbol generating $\Gamma$ together with $G$. 
    By Proposition \ref{Prop: HNN extension isomorphism}, $L_{\omega}(G) \cong \HNN(M,B,\Theta_\omega)$. Now consider the graph $\cG$ with $V(\cG) = \{p\}$ and $E(\cG) = \{e, \overline{e}\}$ where $M_p = M$, $N_e = B$, $r_e = \Theta_\omega$ and $s_e = \iota$. By \cite[Corollary 6.10]{Fima_Amaury}, $\pi_1(\cG)$ is Connes-embeddable if the edge algebra $B$ is amenable and the vertex algebra $M$ is Connes-embeddable. Since both of these follow from the hypothesis, we are done with the proof. (Notice that the same idea also gives a proof for the amalgamated free product case, but that was already known from \cite{Brown_Dykema_Jung}).  
\end{proof}

Recall that for non-zero integers $m,n \in \Z$, the Baumslag-Solitar group $\BS(m,n) = \langle a,t \; | \; ta^m t^{-1} = a^n \rangle$. These groups are examples of HNN extensions; indeed $\BS(m,n) = \HNN(G,H,\nu)$ where $G = \Z = \langle a \rangle$, $H = m \Z \leq \Z $ and $\nu: m\Z \rightarrow n\Z$ is given by $\nu(mk) = nk$. 

\begin{cor}
\label{Cor: Baumslag Solitar groups SH}
    For non-zero integers $m,n \in \Z$, the Baumslag Solitar groups $\BS(m,n)$ are cocycle-hyperlinear. 
\end{cor}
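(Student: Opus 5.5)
The plan is to recognize $\BS(m,n)$ as an HNN extension and apply Corollary \ref{Prop: HNN extensions SH} directly. As recalled just before the statement, $\BS(m,n) = \HNN(G,H,\nu)$ with $G = \Z = \langle a \rangle$, $H = m\Z \leq \Z$, and $\nu: m\Z \rightarrow \Z$ given by $\nu(mk) = nk$. So there are three hypotheses to check:
\begin{enumerate}
    \item the base group $G = \Z$ is cocycle-hyperlinear;
    \item the associated subgroup $H = m\Z$ is amenable;
    \item $\nu$ is an injective homomorphism.
\end{enumerate}

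For (1), $\Z$ is amenable, so it is treeable. Any essentially free p.m.p.\ action of $\Z$, for example an irrational rotation, has a hyperfinite orbit equivalence relation. Hyperfinite relations are treeable by \cite[Proposition III.3(2)]{Gaboriau_cost}. Proposition \ref{Prop: treeable groups are SH} then shows that $\Z$ is cocycle-hyperlinear.

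One can also argue directly, without treeability. Since $\rmH_2(\Z,\Z) = 0$, the universal coefficient theorem gives $\rmH^2(\Z,\T) = 0$. Hence every $L_\omega(\Z)$ is isomorphic to $L(\Z)$, which is abelian and therefore hyperfinite.

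For (2), $m\Z \cong \Z$ is abelian, hence amenable. For (3), $\nu$ is a homomorphism, and it is injective because $n \neq 0$. With these in hand, Corollary \ref{Prop: HNN extensions SH} gives that $\HNN(\Z, m\Z, \nu) \cong \BS(m,n)$ is cocycle-hyperlinear.

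I expect no genuine obstacle. The only point needing care is the identification $\BS(m,n) \cong \HNN(\Z, m\Z, \nu)$. This follows from the presentations: the HNN relations $t(a^{mk})t^{-1} = a^{nk}$ for all $k$ are all consequences of the single relation $ta^mt^{-1} = a^n$. Negative values of $m$ or $n$ are handled the same way, since $m\Z = |m|\Z$ and $\nu$ remains an injective homomorphism.
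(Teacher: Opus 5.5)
Your proposal is correct and follows the paper's own proof. The paper likewise views $\mathrm{BS}(m,n)$ as $\mathrm{HNN}(\Z, m\Z, \nu)$, notes that $\Z$ is amenable and cocycle-hyperlinear, and applies Corollary \ref{Prop: HNN extensions SH}; your checks that $\nu$ is injective and $m\Z$ is amenable, and your alternative via $\mathrm{H}^2(\Z,\T) = 0$, fill in details the paper leaves implicit.
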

\begin{proof}
    Looking at $\BS(m,n)$ as $\HNN(\Z,m\Z,\nu)$ as above, we notice that $\Z$ is amenable and cocycle-hyperlinear. Hence Corollary \ref{Prop: HNN extensions SH} applies.  
\end{proof}

\section*{AI statement}

The main ideas for Theorems \ref{Main Thm: amenable-by-treeable} and \ref{Main Thm: central extensions of SL(n,Z)} were obtained by the authors (without any use of LLMs) during their visit to the UK operator algebras conference in Sabhal Mor Ostaig (Isle of Skye, Scotland) in June, 2026. Subsequently ChatGPT 5.5 and later ChatGPT Sol were used in this project and the main contributions were as follows: 
\begin{enumerate}
    \item The authors received substantial help from AI in understanding the main results of van der Kallen's work \cite{VanderKallen}, which led them to concretize their main idea of the proof of Theorem \ref{Main Thm: central extensions of SL(n,Z)}.  
    \item The fact that every group that admits an action with a treeable orbit equivalence relation and amenable stabilizers is in fact an amenable-by-treeable group, was actually proven by AI. The authors proved Theorem \ref{Main Thm: amenable-by-treeable} for groups admitting such actions, and they wanted to understand how much more general this class is from amenable-by-treeable groups. They asked AI to get an example of a group admitting such actions which is not amenable-by-treeable. This ultimately led to AI giving them a rather easy proof of this equivalence (which they suspect might be folklore among some experts). 
    \item While dealing with amalgamated free products, AI gave the authors the reference to the work of Fima and Freslon \cite{Fima_Amaury}, which led them to deduce cocycle-hyperlinearity for HNN extensions. 
\end{enumerate}
ChatGPT Astra was used at the final stage for proof-reading, and the authors take complete responsibility for the mathematical contents of this article.

\section*{Acknowledgments}

S.C. is supported by the ERC advanced grant no. 101141693 titled \textit{Noncommutative ergodic theory of higher rank lattices}. He would like to thank Alon Dogon for a wonderful discussion on cocycle-hyperlinearity in CIRM in 2025, and for some useful comments on this article. F.F. gratefully acknowledges support from the Simons Foundation Dissertation Fellowship SFI-MPS-SDF-00015100. He is also grateful to Professor Ben Hayes for the interesting discussion surrounding the present topic. The authors also thank the organisers of the UK Operator Algebras Conference 2026 in Sabhal Mor Ostaig for hosting them, where many of the preliminary discussions took place.

\printbibliography

\end{document}